\newtheorem{theorem}{Theorem}[section]
\newtheorem{lemma}[theorem]{Lemma}
\newtheorem{proposition}[theorem]{Proposition}
\theoremstyle{definition}
\newtheorem{definition}[theorem]{Definition}
\theoremstyle{remark}
\newtheorem{remark}[theorem]{Remark}
\numberwithin{equation}{section}
\newcommand{\JM}{Mierczy\'nski}
\newcommand{\RR}{\ensuremath{\mathbb{R}}}
\newcommand{\ZZ}{\ensuremath{\mathbb{Z}}}
\newcommand{\NN}{\ensuremath{\mathbb{N}}}
\newcommand{\PP}{\ensuremath{\mathbb{P}}}
\newcommand{\TT}{\ensuremath{\mathbb{T}}}
\newcommand{\OFP}{\ensuremath{(\Omega,\mathfrak{F},\PP)}}
\newcommand{\abs}[1]{\ensuremath{\lvert#1\rvert}}
\newcommand{\intpart}[1]{\ensuremath{\lfloor#1\rfloor}}
\newcommand{\norm}[1]{\ensuremath{\lVert#1\rVert}}
\newcommand{\mlsps}{measurable linear skew\nobreakdash-\hspace{0pt}product
semidynamical system}
\DeclareMathOperator{\cl}{cl}
\DeclareMathOperator{\codim}{codim}
\DeclareMathOperator{\Id}{Id}
\DeclareMathOperator{\Int}{Int}
\DeclareMathOperator{\lnplus}{ln^{+}}
\DeclareMathOperator{\osc}{osc}
\DeclareMathOperator{\spanned}{span}
\begin{document}
\title[Principal Lyapunov Exponents and Principal Floquet Spaces]
{Principal Lyapunov Exponents and Principal Floquet Spaces of
Positive Random Dynamical Systems. I. General Theory}

\author{Janusz Mierczy\'nski} \address{Institute of Mathematics and
Computer Science, Wroc{\l}aw University of Technology, Wybrze\.ze
Wyspia\'nskiego 27, PL-50-370 Wroc{\l}aw, Poland}
\urladdr{http://www.im.pwr.wroc.pl/~mierczyn/index.html}
\email{mierczyn@pwr.wroc.pl}
\thanks{The first-named author was supported from resources for
science in years 2009-2012 as research project (grant MENII N N201
394537, Poland)}

\author{Wenxian Shen} \address{Department of Mathematics and
Statistics, Auburn University, Auburn University, AL 36849, USA}
\email{wenxish@auburn.edu}
\thanks{The second-named author was partially supported by NSF grant
DMS-0907752}

\date{}

\subjclass[2010]{Primary 37H15, 37L55, 37A30; Secondary 15B52, 34F05,
35R60.}

\begin{abstract}
This series of papers is concerned with principal Lyapunov exponents
and principal Floquet subspaces of positive random dynamical systems
in  ordered Banach spaces.  The current part of the series focuses on
the development of general theory. First, the notions of generalized
principal Floquet subspaces, generalized principal Lyapunov
exponents, and generalized exponential separations for general
positive random dynamical systems in  ordered Banach spaces are
introduced, which extend the classical notions of principal Floquet
subspaces, principal Lyapunov exponents, and exponential separations
for strongly positive deterministic systems in strongly ordered
Banach to general positive random dynamical systems in  ordered
Banach spaces. Under some quite general assumptions, it is then shown
that a positive random dynamical system in an ordered Banach space
admits a family of generalized principal Floquet subspaces, a
generalized principal Lyapunov exponent, and a generalized
exponential separation. We  will consider in the forthcoming part(s)
the applications of the general theory developed in this part to
positive random dynamical systems arising from a variety of random
mappings and differential equations, including random Leslie matrix
models, random cooperative systems of ordinary differential
equations,  and random parabolic equations.
\end{abstract}

\keywords{Random dynamical system, ordered Banach space, principal
Lyapunov exponent, principal Floquet subspaces, exponential
separation, entire positive solution, multiplicative ergodic theorem,
Hilbert projective metric, skew-product semiflow.}

\maketitle

\section{Introduction}
\label{section-introduction}

This is the first part of a series of papers. The series is devoted
to the study of principal Lyapunov exponents and principal Floquet
subspaces of positive random dynamical systems in  ordered Banach
spaces.  This first part focuses on the development of general theory
of principal Lyapunov exponents and principal Floquet subspaces of
general positive random dynamical systems in ordered Banach spaces.
The forthcoming part(s) of the series  will concern the applications
of the general theory developed in Part I to positive random
dynamical systems arising from a variety of random mappings and
differential equations, including random Leslie matrix models, random
cooperative systems of ordinary differential equations,  and random
parabolic equations.

Lyapunov exponents play an important role in the study of asymptotic
dynamics of linear and nonlinear random evolution systems. The study
of Lyapunov exponents traces back to Lyapunov \cite{Lya}. Oseledets
in \cite{Ose} obtained some important results on Lyapunov exponents
for finite dimensional systems, which is called now the Oseledets
multiplicative ergodic theorem. Since then, a huge amount of research
has been carried out toward alternative proofs of the Osedelets
multiplicative ergodic theorem and extensions of the Osedelets
multiplicative theorem for finite dimensional systems to certain
infinite dimensional ones (see \cite{Arn}, \cite{JoPaSe}, \cite{Kre},
\cite{Lian-Lu},  \cite{Man}, \cite{Mil}, \cite{Raghu}, \cite{Rue1},
\cite{Rue2}, \cite{ScFl}, and references therein).  In the recent
work \cite{Lian-Lu},  Lian and Lu  studied Lyapunov exponents of
general infinite dimensional random dynamical systems in a Banach
space and established a multiplicative ergodic theorem for such
systems.

The largest finite Lyapunov exponents (or top Lyapunov exponents) and
the associated invariant subspaces of both deterministic and random
dynamical systems play special roles in the applications to nonlinear
systems.  Classically, the top finite Lyapunov exponent of a positive
deterministic or random dynamical system in an ordered Banach space
is called the {\em principal Lyapunov exponent\/} if its associated
invariant subspace is one dimensional and is spanned by a positive
vector (in such case, the invariant subspace is called the {\em
principal Floquet subspace}).  Principal Lyapunov exponents and
principal Floquet subspaces are the analog of principal eigenvalues
and principal eigenfunctions of elliptic and time periodic parabolic
operators. Numerous works have also been carried out toward principal
Lyapunov exponents and principal Floquet subspaces for certain
positive deterministic as well as random dynamical systems in ordered
Banach spaces, in particular, for  deterministic and random dynamical
systems generated by nonautonomous and random parabolic equations
with bounded coefficients (see \cite{Hu}, \cite{HuPo}, \cite{HuPoSa},
\cite{HuShVi}, \cite{Mi1}, \cite{Mi2},   \cite{MiSh1}, \cite{MiSh2},
\cite{MiSh3}, \cite{Po1}, \cite{ShVi}, \cite{ShYi}, and references
therein).

Many strongly positive deterministic as well as random dynamical
systems in strongly ordered Banach space  are shown to have principal
Lyapunov exponents  (and  hence principal Floquet subspaces and
entire positive orbits). Moreover, the so called exponential
separations are admitted in such systems. For example, let $(Z,
(\sigma_t)_{t\in\RR})$ be a compact uniquely ergodic minimal flow and
$X$ be a strongly ordered Banach space with the positive cone $X^+$
(see \ref{subsection-ordered} for detail). Let $\Pi = (\Pi_t)_{t \ge
0}$, $\Pi_t \colon X \times Z \to X \times Z$ be a
skew\nobreakdash-\hspace{0pt}product semiflow over
$(Z,(\sigma_t)_{t\in\RR})$,
\begin{equation*}
\Pi_t(x,z) = (\Phi(t,z)x, \sigma_{t} z),
\end{equation*}
where $\Phi(t,z) \in \mathcal{L}(X,X)$.  If $\Pi$ is strongly
positive (i.e. $\Phi(t,z)x \in \Int{(X^+)}$ for any $t > 0$, $z \in
Z$, and $x \in X^+ \setminus \{0\}$) and completely continuous (i.e.,
$\{\, \Phi(t,z)B : z \in Z \,\}$ is a relatively compact subset of
$X$ for any $t > 0$ and any bounded subset $B$ of $X$), then there
are $\lambda_1 \in \RR$, $M, \gamma > 0$,  a subspace $E(z) \subset
X$ with $E(z) = \spanned{\{v(z)\}}$ for some $v(z) \in \Int{(X^+)}$,
$\norm{v(z)} = 1$, and a subspace $F(z) \subset X$  with $F(z)\cap
X^+=\{0\}$ such that $X = E(z) \oplus F(z)$ for any $z \in Z$, $E(z)$
and $F(z)$ are continuous in $z \in Z$, and
\begin{itemize}
\item[{\rm (i)}]
$\Phi(t,z)E(z) = E(\sigma_{t} z)$ for any $t > 0$ and $z \in Z$;
\item[{\rm (ii)}]
$\Phi(t,z)F(z) \subset F(\sigma_{t} z)$ for any $t > 0$ and $z
\in Z$;
\item[{\rm (iii)}]
$\displaystyle \lim_{t\to\infty}
\frac{\ln{\norm{\Phi(t,z)v(z)}}}{t} = \lambda_1$;
\item[{\rm (iv)}]
$\displaystyle \frac{\norm{\Phi(t,z) w}}{\norm{\Phi(t,z)v(z)}}
\le M e^{-{\gamma} t}$ for any $w \in F(z)$ with $\norm{w} = 1$,
$t > 0$, and $z \in Z$
\end{itemize}
(see \cite{MiSh1}, \cite{PoTe1}). Here $\lambda_1$ and
$\{E(z)\}_{z\in Z}$ are the principal Lyapunov exponent and principal
Floquet subspaces of $\Pi$, respectively, and the property (iv) is
referred to the {\em exponential separation} of $\Pi$.  Note that the
above results extend the classical Kre\u{\i}n--Rutman theorem for
strongly positive and compact operators in strongly order Banach
spaces to strongly positive and compact deterministic
skew\nobreakdash-\hspace{0pt}product semiflows in strongly ordered
Banach spaces.

For a general positive random dynamical system, there may be no
finite Lyapunov exponents (and hence no principal Lyapunov exponent
in classical sense); if the top Lyapunov exponent is finite, its
associated invariant subspace may not be one dimensional (and hence
there is no principal Lyapunov exponent in the classical sense
either).  It is not known whether a general positive random dynamical
system admits positive entire orbits and/or  invariant subspaces
spanned by positive vectors.

The objective of the current part of the series  is to investigate
the extent to which  the principal Lyapunov exponents and principal
Floquet subspaces theory for strongly positive and compact
deterministic dynamical systems may be generalized to general
positive random dynamical systems. The classical Kre\u{\i}n--Rutman
theorem for strongly positive and compact operators in strongly
ordered Banach spaces is extended to quite general positive random
dynamical systems in ordered Banach spaces.  In~particular, the
existence of entire positive orbits is shown without the assumption
of strong positivity (see Theorem
\ref{thm-largest-meets-nontrivially}); the existence of one
dimensional invariant measurable subspaces which are spanned by
positive vectors and whose associated Lyapunov exponent is the
largest  (such invariant subspaces and the associated Lyapunov
exponent are called {\em generalized principal Floquet subspaces\/}
and {\em generalized principal Lyapunov exponents\/}, respectively,
see Definition \ref{generalized-floquet-space}) is proved without the
assumption of the existence of finite Lyapunov exponents (see Theorem
\ref{theorem-w}), and the existence of a {\em generalized exponential
separation\/} (see Definition
\ref{generalized-exponential-separation}) is proved too without the
assumption of the existence of finite Lyapunov exponents (see Theorem
\ref{separation-thm}).

In the forthcoming part(s) of this series we will study the
applications of the general results established in this part to
random Leslie matrix models, random cooperative systems of ordinary
differential equations, and random parabolic equations.

The rest of the current part is organized as follows. In
Section~\ref{section-notation}, we introduce standing notions and
assumptions.  We introduce the concepts of generalized principal
Floquet subspaces, generalized principal Lyapunov exponents, and
generalized exponential separations and state the main results of
this part in Section~\ref{section-main-results}.  In
Section~\ref{section-preliminaries}, we present some preliminary
materials to be used in the proofs of the main results, including
some classical ergodic theorems and fundamental properties of Hilbert
projective metric. We prove the main results in the last section.

\section{Standing Notions and Assumptions}
\label{section-notation}
In this section, we introduce standing notions and assumptions.

If $f \colon A \to \RR$ we define
\begin{equation*}
f^{+}(a) :=
\begin{cases}
f(a) & \text{ if } f(a) \ge 0 \\
0 & \text{ if } f(a) < 0
\end{cases}
\qquad \text{and} \qquad f^{-}(a) :=
\begin{cases}
0 & \text{ if } f(a) \ge 0 \\
-f(a) & \text{ if } f(a) < 0
\end{cases}.
\end{equation*}
We have $f = f^{+} - f^{-}$ and $\abs{f} = f^{+} + f^{-}$.

\smallskip
For a metric space $Y$, $\mathfrak{B}(Y)$ stands for the
$\sigma$\nobreakdash-\hspace{0pt}algebra of all Borel subsets of $Y$.

\subsection{Metric Dynamical Systems}
\label{subsection-metric}

By a {\em probability space\/} we understand a triple $\OFP$, where
$\Omega$ is a set, $\mathfrak{F}$ is a
$\sigma$\nobreakdash-\hspace{0pt}algebra of subsets of $\Omega$, and
$\mathbb{P}$ is a probability measure defined for all $F \in
\mathfrak{F}$.

Let $\TT$ stand for either $\ZZ$ or $\RR$.

A {\em measurable dynamical system\/} on the probability space $\OFP$
is a $(\mathfrak{B}(\TT) \otimes \mathfrak{F},
\mathfrak{F})$\nobreakdash-\hspace{0pt}measurable mapping $\theta
\colon \TT \times \Omega \to \Omega$ such that
\begin{itemize}
\item
$\theta(0,\omega) = \omega$ for any $\omega \in \Omega$,
\item
$\theta(t_1 + t_2,\omega) = \theta(t_2, \theta(t_1,\omega))$ for
any $t_1, t_2 \in \TT$ and any $\omega \in \Omega$.
\end{itemize}
We write $\theta(t,\omega)$ as $\theta_{t}\omega$.  Also, we usually
denote measurable dynamical systems by $(\OFP, \allowbreak
(\theta_t)_{t\in\TT})$, or simply by $(\theta_t)_{t\in\TT}$.

A {\em metric dynamical system\/} is a measurable dynamical system
$(\OFP, \allowbreak (\theta_t)_{t\in\TT})$ such that for each $t \in
\TT$ the mapping $\theta_t \colon \Omega \to \Omega$ is
$\PP$\nobreakdash-\hspace{0pt}preserving (i.e.,
$\PP(\theta_t^{-1}(F)) = \PP(F)$ for any $F \in \mathfrak{F}$ and $t
\in \TT$).

When $\TT = \RR$ we call a (measurable, metric) dynamical system a
(measurable, metric) {\em flow\/}.  To emphasize the situation when
$\TT = \ZZ$, we speak of (measurable, metric) {\em
discrete\nobreakdash-\hspace{0pt}time\/} dynamical system.

When we use the symbol ``$\lim_{n \to \infty}$'' it is implied that
$n$ is considered for (perhaps sufficiently large) $n \in \NN$.
Similarly, when we use the symbol ``$\lim_{n \to -\infty}$'' it is
implied that $n$ is considered for (perhaps sufficiently large)
negative integers $n$.

For a measurable dynamical system $(\OFP,(\theta_t)_{t\in\TT})$,
$\Omega' \subset \Omega$ is {\em invariant\/} if $\theta_{t}(\Omega')
= \Omega'$ for all $t \in \TT$.  An $(\mathfrak{F},
\mathfrak{B}(\RR))$\nobreakdash-\hspace{0pt}measurable function
defined on an invariant $\Omega' \subset \Omega$ with $\PP(\Omega') =
1$ is called {\em invariant\/} if $f(\theta_{t}\omega) = f(\omega)$
for any $\omega \in \Omega'$ and any $t \in \TT$.

$(\OFP,(\theta_t)_{t\in\TT})$ is said to be {\em ergodic\/} if for
any invariant $F \in \mathfrak{F}$, either $\PP(F) = 1$ or $\PP(F) =
0$.

\subsection{Measurable Linear Skew-\hspace{0pt}Product Semidynamical
Systems}
\label{subsection-mlsps}

Let $X$ be a real Banach space, with norm $\norm{\cdot}$.  Let
$\mathcal{L}(X)$ stand for the Banach space of bounded linear
mappings from $X$ into $X$.  The standard norm in $\mathcal{L}(X)$
will be also denoted by $\norm{\cdot}$.

For a Banach space $X$, we will denote by $X^{*}$ its dual and by
$\langle \cdot, \cdot \rangle$ the standard duality pairing (that is,
for $u \in X$ and $u^{*} \in X^{*}$ the symbol $\langle u, u^{*}
\rangle$ denotes the value of the bounded linear functional $u^{*}$
at $u$).  Without further mention, we understand that the norm in
$X^{*}$ is given by $\norm{u^{*}} = \sup\{\,\abs{\langle u, u^{*}
\rangle}: \norm{u} \le 1\,\}$.

For $\TT = \RR$ we write $\TT^{+}$ for $[0, \infty)$.  For $\TT =
\ZZ$ we write $\TT^{+}$ for $\{0, 1, 2, 3, \dots\}$.

Let  $(\OFP, \allowbreak (\theta_t)_{t\in\TT})$ be a measurable
dynamical system. By a {\em measurable linear
skew\nobreakdash-\hspace{0pt}product semidynamical system\/} $\Phi =
((U_\omega(t))_{\omega \in \Omega, t \in \TT^{+}}, \allowbreak
(\theta_t)_{t\in\TT})$ on a Banach space $X$ covering
$(\theta_{t})_{t \in \TT}$ we understand a $(\mathfrak{B}(\TT^{+})
\otimes \mathfrak{F} \otimes \mathfrak{B}(X),
\mathfrak{B}(X))$\nobreakdash-\hspace{0pt}measurable mapping
\begin{equation*}
[\, \TT^{+} \times \Omega \times X \ni (t,\omega,u) \mapsto
U_{\omega}(t)u \in X \,]
\end{equation*}
satisfying the following:
\begin{itemize}
\item
\begin{equation}
\label{eq-identity}
U_{\omega}(0) = \mathrm{Id}_{X} \quad \forall
\omega \in \Omega,
\end{equation}
\begin{equation}
\label{eq-cocycle}
U_{\theta_{s}\omega}(t) \circ U_{\omega}(s) = U_{\omega}(t+s)
\qquad \forall \omega \in \Omega, \ t, s \in \TT^{+};
\end{equation}
\item
for each $\omega \in \Omega$ and $t \in \TT^{+}$, $[\, X \ni u
\mapsto U_{\omega}(t)u \in X \,] \in \mathcal{L}(X).$
\end{itemize}

When $\TT^{+} = [0,\infty)$ we call a \mlsps\ a (measurable linear
skew\nobreakdash-\hspace{0pt}product) {\em semiflow\/}.  To emphasize
the situation when $\TT^{+} = \{0, 1, 2, \dots \}$, we speak of
(measurable linear skew\nobreakdash-\hspace{0pt}product) {\em
discrete\nobreakdash-\hspace{0pt}time\/} semidynamical system.

If the Banach space $X$ is separable, by Pettis' theorem (see, e.g.,
\cite[Theorem~1.1.6]{Schw-Ye}), the measurability of the mapping $[\,
(t,\omega,u) \mapsto U_{\omega}(t)u \,]$ is equivalent to the fact
that for each $u^{*} \in X^{*}$ the mapping
\begin{equation*}
[\, \TT^{+} \times \Omega \times X \ni (t,\omega,u) \mapsto \langle
U_{\omega}(t)u, u^{*} \rangle \in \RR \,]
\end{equation*}
is $(\mathfrak{B}(\TT^{+}) \otimes \mathfrak{F} \otimes
\mathfrak{B}(X),
\mathfrak{B}(\RR))$\nobreakdash-\hspace{0pt}measurable.

\smallskip

For $\omega \in \Omega$, $t \in \TT^{+}$ and $u^{*} \in X^*$ we
define $U^{*}_{\omega}(t)u^{*}$ by
\begin{equation}
\label{dual-definition}
\langle u, U^{*}_{\omega}(t)u^{*} \rangle = \langle
U_{\theta_{-t}\omega}(t)u , u^{*} \rangle \qquad \text{for each } u
\in X
\end{equation}
(in other words, $U^{*}_{\omega}(t)$ is the mapping dual to
$U_{\theta_{-t}\omega}(t)$).  It is straightforward that
\begin{equation}
\label{eq-identity-dual}
U^{*}_{\omega}(0) = \mathrm{Id}_{X^{*}} \quad \text{for any } \omega
\in \Omega,
\end{equation}
and
\begin{equation}
\label{eq-cocycle-dual}
U^{*}_{\theta_{-s}\omega}(t) \circ U^{*}_{\omega}(s) =
U^{*}_{\omega}(t+s) \qquad \text{for any } \omega \in \Omega \text{
and any }t, s \in \TT^{+}.
\end{equation}

In case where the mapping
\begin{equation*}
[\, \TT^{+} \times \Omega \times X^{*} \ni (t,\omega,u^{*})
\mapsto U^{*}_{\omega}(t)u^{*} \in X^{*} \,]
\end{equation*}
is $(\mathfrak{B}(\TT^{+}) \otimes \mathfrak{F} \otimes
\mathfrak{B}(X^{*}),
\mathfrak{B}(X^{*}))$\nobreakdash-\hspace{0pt}measurable, we will
call the \mlsps\ $\Phi^{*} = ((U^{*}_\omega(t))_{\omega \in \Omega, t
\in \TT^{+}},(\theta_{-t})_{t\in\TT})$ on $X^{*}$ covering
$(\theta_{-t})_{t \in \TT}$ the {\em dual\/} of $\Phi$.

For instance, if we assume that $X$ is separable and reflexive, then
$X^{*}$ is separable, hence, by Pettis' theorem, the measurability of
the mapping $[\, (t,\omega,u^{*}) \mapsto U^{*}_{\omega}(t)u^{*} \,]$
is equivalent to the fact that for each $u \in X$ the mapping
\begin{equation*}
[\, \TT^{+} \times \Omega \times X^{*} \ni (t,\omega,u^{*}) \mapsto
\langle u, U^{*}_{\omega}(t)u^{*} \rangle = \langle
U_{\theta_{-t}\omega}(t)u , u^{*} \rangle\in \RR \,]
\end{equation*}
is $(\mathfrak{B}(\TT^{+}) \otimes \mathfrak{F} \otimes
\mathfrak{B}(X^{*}),
\mathfrak{B}(\RR))$\nobreakdash-\hspace{0pt}measurable, which in its
turn follows from the facts that the composition $[\, (t,\omega)
\mapsto (t, \theta_{-t}\omega) \mapsto U_{\theta_{-t}\omega}(t)u \,]$
is $(\mathfrak{B}(\TT^{+}) \otimes \mathfrak{F},
\mathfrak{B}(X))$\nobreakdash-\hspace{0pt}measurable and that the
$\langle \cdot, \cdot \rangle$ operation is continuous.

\smallskip
Since now till the end of the subsection we assume that
$((U_\omega(t))_{\omega \in \Omega, t \in \TT^{+}}, (\theta_{t})_{t
\in \TT^{+}})$ is a \mlsps.

Let $l$ be a positive integer. By a {\em family of
$l$\nobreakdash-\hspace{0pt}dimensional vector subspaces of $X$\/} we
understand a mapping $E$, defined on some $\Omega_0 \subset \Omega$
with $\PP(\Omega_0) = 1$, assigning to each $\omega \in \Omega_0$ an
$l$\nobreakdash-\hspace{0pt}dimensional vector subspace $E(\omega)$
of $X$.  Similarly, by a {\em family of
$l$\nobreakdash-\hspace{0pt}codimensional closed vector subspaces of
$X$\/} we understand a mapping $F$, defined on some $\Omega_0 \subset
\Omega$ with $\PP(\Omega_0) = 1$, assigning to each $\omega \in
\Omega_0$ an $l$\nobreakdash-\hspace{0pt}codimensional closed vector
subspace $F(\omega)$ of $X$.

We will usually denote families of vector subspaces by
$\{E(\omega)\}_{\omega \in \Omega_0}$, etc.

Regarding the measurability of families of
finite\nobreakdash-\hspace{0pt}dimensional vector subspaces, we will
use the following definition:  A family $\{E(\omega)\}_{\omega \in
\Omega_0}$ of $l$\nobreakdash-\hspace{0pt}dimensional vector
subspaces of $X$ is {\em measurable\/} if there are $(\mathfrak{F},
\mathfrak{B}(X))$\nobreakdash-\hspace{0pt}measurable functions $v_1,
\dots, v_l \colon \allowbreak \Omega_0 \to X$ such that
$(v_1(\omega), \dots, v_l(\omega))$ forms a basis of $E(\omega)$ for
each $\omega \in \Omega_0$ (see \cite[Lemma~5.6 and
Corollary~7.3]{Lian-Lu}).

Let $\{E(\omega)\}_{\omega \in \Omega_0}$ be a family of
$l$\nobreakdash-\hspace{0pt}dimensional vector subspaces of $X$, and
let $\{F(\omega)\}_{\omega \in \Omega_0}$ be a family of
$l$\nobreakdash-\hspace{0pt}codimensional closed vector subspaces of
$X$, such that $E(\omega) \oplus F(\omega) = X$ for all $\omega \in
\Omega_0$.  We define the {\em family of projections associated with
the decomposition\/} $E(\omega) \oplus F(\omega) = X$ as
$\{P(\omega)\}_{\omega \in \Omega_0}$, where $P(\omega)$ is the
linear projection of $X$ onto $F(\omega)$ along $E(\omega)$, for each
$\omega \in \Omega_0$.

The family of projections associated with the decomposition
$E(\omega) \oplus F(\omega) = X$ is called {\em strongly
measurable\/} if for each $u \in X$ the mapping $[\, \Omega_0 \ni
\omega \mapsto P(\omega)u \in X \,]$ is $(\mathfrak{F},
\mathfrak{B}(X))$\nobreakdash-\hspace{0pt}measurable.

We say that the decomposition $E(\omega) \oplus F(\omega) = X$, with
$\{E(\omega)\}_{\omega \in \Omega_0}$
finite\nobreakdash-\hspace{0pt}dimensional, is {\em invariant\/} if
$\Omega_0$ is invariant, $U_{\omega}(t)E(\omega) =
E(\theta_{t}\omega)$ and $U_{\omega}(t)F(\omega) \subset
F(\theta_{t}\omega)$, for each $t \in \TT^{+}$.

A strongly measurable family of projections associated with the
invariant decomposition $E(\omega) \oplus F(\omega) = X$ is referred
to as {\em tempered\/} if
\begin{equation*}
\lim\limits_{\substack{t \to \pm\infty \\ t \in \TT}}
\frac{\ln{\norm{P(\theta_{t}\omega)}}}{t} = 0 \qquad \PP\text{-a.s.
on }\Omega_0.
\end{equation*}

\subsection{Ordered Banach Spaces}
\label{subsection-ordered}

Let $X$ be a real Banach space, with norm $\norm{\cdot}$. By a {\em
cone\/} in   $X$ we understand a closed convex set $X^{+}$ such that
\begin{itemize}
\item[(C1)]
$\alpha \ge 0$ and $u \in X^{+}$ imply ${\alpha}u \in X^{+}$, and
\item[(C2)]
$X^{+} \cap (-X^{+}) = \{0\}$.
\end{itemize}

A pair $(X, X^{+})$, where $X$ is a Banach space and $X^{+}$ is a
cone in $X$, is referred to as an {\em ordered Banach space}.

If $(X, X^{+})$ is an ordered Banach space, for $u, v \in X$ we write
$u \le v$ if $v - u \in X^{+}$, and $u < v$ if $u \le v$ and $u \ne
v$.  The symbols $\ge$ and $>$ are used in analogous way.

We say that $u, v \in X^{+} \setminus \{0\}$ are {\em comparable\/},
written $u \sim v$, if there are positive numbers
$\underline{\alpha}, \overline{\alpha}$ such that
$\underline{\alpha}v \le u \le \overline{\alpha}v$.  The $\sim$
relation is clearly an equivalence relation.  For a nonzero $u \in
X^{+}$ we call the {\em component\/} of $u$, denoted by $C_{u}$, the
equivalence class of $u$, $C_{u} = \{\, v \in X^{+} \setminus \{0\}:
v \sim u \,\}$.

A cone $X^{+}$ in a Banach space $X$ is called
\begin{itemize}
\item
{\em solid \/}  if the interior $X^{++}$ of $X^{+}$ is nonempty,
\item
{\em reproducing\/} if $X^{+} - X^{+} = X$, and
\item
{\em total\/} if $X^{+} - X^{+}$ is dense in $X$.
\end{itemize}

An ordered Banach space $(X,X^+)$ is called {\it strongly ordered} if
$X^+$ is solid.

A cone $X^{+}$ in a Banach space $X$ is called {\em normal\/} if
there exists $K > 0$ such that for any $u, v \in X$ satisfying $0 \le
u \le v$ there holds $\norm{u} \le K \norm{v}$.

If $X^{+}$ is a normal cone we say that $(X, X^{+})$ is a {\em
normally ordered Banach space}.  In such a case, the Banach space $X$
can be renormed so that for any $u, v \in X$, $0 \le u \le v$ implies
$\norm{u} \le \norm{v}$ (see \cite[V.3.1, p.~216]{Schaef}).  Such a
norm is called {\em monotonic\/}.

From now on, when speaking of a normally ordered Banach space we
assume that the norm on $X$ is monotonic.

For an ordered Banach space $(X, X^{+})$ denote by $(X^{*})^{+}$ the
set of all $u^{*} \in X^{*}$ such that $\langle u, u^{*} \rangle \ge
0$ for all $u \in X^{+}$.  The closed subset $(X^{*})^{+}$ of $X^{*}$
is convex and satisfies (C1), however it need not satisfy (C2).
Nevertheless, if the cone $X^{+}$ is total then $(X^{*})^{+}$
satisfies (C2) (therefore is a cone).

It is a classical result that $X^{+}$ is normal if and only~if
$(X^{*})^{+}$ is reproducing, and that $X^{+}$ is reproducing if and
only~if $(X^{*})^{+}$ is normal, see e.g.~\cite[V.3.5]{Schaef}.

Sometimes an ordered Banach space $(X, X^{+})$ is a lattice:  any two
$u, v \in X$ have a least upper bound $u \vee v$ and a greatest lower
bound $u \wedge v$.  In such a case we write $u^{+} := u \wedge 0$,
$u^{-} := (-u) \vee 0$, and $\abs{u} := u^{+} + u^{-}$.  We have $u =
u^{+} - u^{-}$ for any $u \in X$.

An ordered Banach space $(X, X^{+})$ being a lattice is a {\em Banach
lattice\/} if there is a norm $\norm{\cdot}$ on $X$ (a {\em lattice
norm\/}) such that for any $u, v \in X$, if $\abs{u} \le \abs{v}$
then $\norm{u} \le \norm{v}$.  From now on, when speaking of a Banach
lattice we assume that the norm on $X$ is a lattice norm.

It is straightforward that in a Banach lattice the cone is normal and
reproducing.  Moreover, if $(X, X^{+})$ is a Banach lattice then
$(X^{*}, (X^{*})^{+})$ is a Banach lattice, too
(see~\cite[V.7.4]{Schaef}).

The reader is referred to forthcoming papers in the current series
for a variety of examples of ordered Banach spaces and ordered Banach
lattices.

\subsection{Assumptions}
\label{subsection-assumptions}

We list now assumptions we will make at various points in the sequel.

\medskip

\noindent\textbf{(A0)} (Ordered  Banach space) {\em
\begin{itemize}
\item[{\rm (i)}]
$(X, X^{+})$ is an ordered separable Banach space with $\dim{X}
\ge 2$.
\item[{\rm (ii)}]
$(X,X^+)$ is a normally ordered separable Banach space with
$\dim{X} \ge 2$.
\item[{\rm (iii)}]
$(X,X^+)$ is a separable Banach lattice with $\dim{X} \ge 2$.
\end{itemize}}

\medskip

\noindent\textbf{(A0)$^*$} (Ordered  Banach space) {\em
\begin{itemize}
\item[{\rm (i)}]
$(X^*, (X^*)^{+})$ is an ordered separable Banach space with
$\dim{X^*} \ge 2$.
\item[{\rm (ii)}]
$(X^*,(X^*)^+)$ is a normally ordered separable Banach space with
$\dim{X^*} \ge 2$.
\item[{\rm (iii)}]
$(X^*,(X^*)^+)$ is a separable Banach lattice with $\dim{X^*} \ge
2$.
\end{itemize}
}

\smallskip
\noindent \textbf{(A1)} (Integrability/injectivity/complete
continuity) {\em $\Phi = ((U_\omega(t))_{\omega \in \Omega, t \in
\TT^{+}}, \allowbreak (\theta_t)_{t\in\TT})$ is a \mlsps\ on a
separable Banach space $X$ covering an ergodic metric dynamical
system $(\theta_{t})_{t \in \TT}$ on $\OFP$, with the complete
measure $\PP$ in the case of $\TT = \RR$, satisfying the following:}
\begin{itemize}
\item[(i)] (Integrability) {\em
\begin{itemize}
\item
In the discrete\nobreakdash-\hspace{0pt}time case: The
function
\begin{equation*}
[\, \Omega \ni \omega \mapsto \lnplus{\norm{U_{\omega}(1)}}
\in [0,\infty) \,]
\end{equation*}
belongs to $L_1(\OFP)$.
\item
In the continuous\nobreakdash-\hspace{0pt}time case: The
functions
\begin{equation*}
[\, \Omega \ni \omega \mapsto \sup\limits_{0 \le s \le 1}
{\lnplus{\norm{U_{\omega}(s)}}} \in [0,\infty) \,]
\end{equation*}
and
\begin{equation*}
[\, \Omega \ni \omega \mapsto \sup\limits_{0 \le s \le 1}
{\lnplus{\norm{U_{\theta_{s}\omega}(1-s)}}} \in [0,\infty) \,]
\end{equation*}
belong to $L_1(\OFP)$.
\end{itemize}
}
\item[(ii)] (Injectivity)
{\em The linear operator $U_{\omega}(1)$ is injective almost
surely on $\Omega$.}
\item[(iii)] (Complete continuity)
{\em The linear operator $U_{\omega}(1)$ is completely continuous
almost surely on $\Omega$.}
\end{itemize}

In the sequel, by (A1)$^*$(i), (A1)$^*$(ii) and (A1)$^*$(iii) we will
understand the counterparts of (A1)(i), (A1)(ii) and (A1)(iii) for
the dual \mlsps\ $\Phi^{*}$.  More precisely, for~example
(A1)$^*$(ii) means the following: ``the mapping $[\, \TT^{+} \times
\Omega \times X^{*} \ni (t,\omega,u^{*}) \mapsto
U^{*}_{\omega}(t)u^{*} \in X^{*} \,]$ is $(\mathfrak{B}(\TT^{+})
\otimes \mathfrak{F} \otimes \mathfrak{B}(X^{*}),
\mathfrak{B}(X^{*}))$\nobreakdash-\hspace{0pt}measurable, and the
linear operator $U^{*}_{\omega}(1)$ is injective almost surely on
$\Omega$.''

\smallskip
Observe that, assuming the measurability in the definition of
$\Phi^{*}$ holds, if (A1)(i) is satisfied then (A1)$^*$(i) is
satisfied, too; similarly, if (A1)(iii) is satisfied then
(A1)(iii)$^*$ is satisfied.

%

\medskip
\noindent\textbf{(A2)} (Positivity) {\em $X$ satisfies
\textup{(A0)(i)} and $\Phi = ((U_\omega(t))_{\omega \in \Omega, t \in
\TT^{+}}, (\theta_t)_{t\in\TT})$ is a \mlsps\ on  $X$ covering an
ergodic metric dynamical system $(\theta_{t})_{t \in \TT}$ on $\OFP$,
satisfying the following:
\begin{equation*}
U_{\omega}(t)u_1 \le U_{\omega}(t)u_2
\end{equation*}
for any $\omega \in \Omega$, $t \in \TT^{+}$ and $u_1, u_2 \in X$
with $u_1 \le u_2$.}

\medskip
Similarly, we write

\medskip
\noindent\textbf{(A2)$^{*}$} (Positivity) {\em $X^*$ satisfies
\textup{(A0)$^*$(i)} and $\Phi^{*} = ((U^{*}_\omega(t))_{\omega \in
\Omega, t \in \TT^{+}}, (\theta_{-t})_{t\in\TT})$ is a \mlsps\ on
$X^{*}$ covering an ergodic metric dynamical system $(\theta_{-t})_{t
\in \TT}$ on $\OFP$, satisfying the following:
\begin{equation*}
U^{*}_{\omega}(t)u^{*}_1 \le U^{*}_{\omega}(t)u^{*}_2
\end{equation*}
for any $\omega \in \Omega$, $t \in \TT^{+}$ and $u^{*}_1, u^{*}_2
\in X^*$ with $u^{*}_1 \le u^{*}_2$.}

Observe that if (A0)$^*$(i) is satisfied and the measurability in the
definition of $\Phi^{*}$ holds then (A2) implies (A2)$^{*}$.

\medskip
\noindent\textbf{(A3)} (Focusing) {\em \textup{(A2)} is satisfied and
there are $\mathbf{e} \in X^{+}$  with $\norm{\mathbf{e}} = 1$  and
an $(\mathfrak{F},
\mathfrak{B}(\RR))$\nobreakdash-\hspace{0pt}measurable function
$\varkappa \colon \Omega \to [1,\infty)$ with
$\lnplus{\ln{\varkappa}} \in L_1(\OFP)$ such that for any $\omega \in
\Omega$ and any nonzero $u \in X^{+}$ there is $\beta(\omega,u) > 0$
with the property that
\begin{equation*}
\beta(\omega,u) \mathbf{e} \le U_{\omega}(1)u \le
\varkappa(\omega) \beta(\omega,u)\mathbf{e}.
\end{equation*}
}

\noindent\textbf{(A3)$^*$}  (Focusing)  {\em \textup{(A2)$^{*}$} is
satisfied and there are $\mathbf{e}^{*} \in (X^{*})^{+}$ with $
\norm{\mathbf{e}^{*}} = 1$  and an $(\mathfrak{F},
\mathfrak{B}(\RR))$\nobreakdash-\hspace{0pt}measurable function $
\varkappa^{*} \colon \Omega \to [1,\infty)$ with
$\lnplus{\ln{\varkappa^*}} \in L_1(\OFP)$ such that for any $\omega
\in \Omega$ and any nonzero $u^{*} \in (X^{*})^{+}$ there is
$\beta^{*}(\omega,u^{*}) > 0$ with the property that
\begin{equation*}
\beta^{*}(\omega,u^{*}) \mathbf{e}^{*} \le U^{*}_{\omega}(1)u^{*}
\le \varkappa^{*}(\omega) \beta^{*}(\omega,u^{*})\mathbf{e}^{*}.
\end{equation*}
}

\noindent\textbf{(A4)} (Strong focusing) {\em \textup{(A3)},
\textup{(A3)$^*$} are satisfied and  $\ln{\varkappa} \in L_1(\OFP)$,
$\ln{\varkappa^*} \in L_1(\OFP)$, and $\langle \mathbf{e},
\mathbf{e}^* \rangle > 0$.}

\medskip

\noindent\textbf{(A5)} (Strong positivity in one direction) {\em
There are $\mathbf{\overline{e}} \in X^+$ with
$\norm{\mathbf{\overline{e}}} = 1$ and an $(\mathfrak{F},
\mathfrak{B}(\RR))$\nobreakdash-\hspace{0pt}measurable function $\nu
\colon \Omega \to (0,\infty)$, with $\ln^{-}{\nu} \in L_1(\OFP)$,
such that
\begin{equation*}
U_{\omega}(1) \mathbf{\overline{e}} \ge \nu(\omega)
\mathbf{\overline{e}}\quad \forall\,\, \omega \in \Omega.
\end{equation*}
}

\noindent\textbf{(A5)$^*$} (Strong positivity in one direction) {\em
There are $\mathbf{\bar e^*}\in (X^*)^+$ with
$\norm{\mathbf{\overline{e}^*}} = 1$ and an $(\mathfrak{F},
\mathfrak{B}(\RR))$\nobreakdash-\hspace{0pt}measurable function
$\nu^* \colon \Omega \to (0,\infty)$, with $\ln^{-}{\nu^*} \in
L_1(\OFP)$, such that
\begin{equation*}
U_{\omega}^*(1) \mathbf{\overline{e}^*} \ge \nu^*(\omega)
\mathbf{\overline{e}^*} \quad \forall\,\, \omega \in \Omega.
\end{equation*}
}

\begin{remark}
We can replace time $1$ with some nonzero $T$ belonging to $\TT^{+}$
in (A1), (A3), (A4), (A5), and  (A1)$^*$, (A3)$^*$, (A5)$^*$.
\end{remark}


\begin{remark}
The focusing property in (A3) (resp. (A3)*) is an extension of the
so-called $u_0$\nobreakdash-\hspace{0pt}positivity for a
deterministic linear operator (see \cite{KeTr}, \cite{Kra}) to a
measurable linear skew-product semidynamical system.
\end{remark}

\section{Definitions and Main Results}
\label{section-main-results}
In this section, we state the definitions and  main results of the
paper. We first  state the definitions in
\ref{subsection-definition}, then recall an Oseledets-type Theorem
proved in \cite{Lian-Lu} in \ref{subsection-Oseledets}, and finally
state the main results in \ref{subsection-main-results}. Throughout
this section, we assume that $((U_\omega(t))_{\omega \in \Omega, t
\in \TT^{+}}, (\theta_{t})_{t \in \TT^{+}})$ is a \mlsps\ on a Banach
space $X$ covering $(\theta_{t})_{t \in \TT}$.

\subsection{Definitions}
\label{subsection-definition}

In this subsection, we introduce the concepts of entire solutions and
extend the notions of principal eigenfunction  and exponential
separation of strongly positive and compact operators. Throughout
this subsection, we assume (A0)(i) and (A2).

\begin{definition}[Entire  orbit]
For $\omega \in \Omega$, by an {\em entire orbit\/} of $U_{\omega}$
we understand a mapping $v_{\omega} \colon \TT \to X$ such that
$v_{\omega}(s + t) = U_{\theta_{s}\omega}(t) v_{\omega}(s)$ for any
$s \in \TT$ and $t \in \TT^{+}$.  The function constantly equal to
zero is referred to the {\em trivial entire orbit\/}.
\end{definition}

Entire orbits of $\Phi^*$ are defined in a  similar way.

\begin{definition}[Generalized principal Floquet subspaces and
principal Lyapunov exponent]
\label{generalized-floquet-space}
A family of one\nobreakdash-\hspace{0pt}dimensional subspaces
$\{\tilde{E}(\omega)\}_{\omega \in \tilde{\Omega}}$ of $X$ is called
a family of {\em generalized principal Floquet subspaces} of $\Phi =
((U_\omega(t))_{\omega \in \Omega, t \in \TT^{+}},
(\theta_t)_{t\in\TT})$ if $\tilde\Omega\subset \Omega$ is invariant,
$\PP(\tilde{\Omega}) = 1$, and
\begin{itemize}
\item[(i)]
$\tilde{E}(\omega) = \spanned{\{w(\omega)\}}$ with $w \colon
\tilde{\Omega} \to X^+ \setminus \{0\}$ being $(\mathfrak{F},
\mathfrak{B}(X))$\nobreakdash-\hspace{0pt}measurable,
\item[(ii)]
$U_{\omega}(t) \tilde{E}(\omega) = \tilde{E}(\theta_{t}\omega)$,
for any $\omega \in \tilde{\Omega}$ and any $t \in \TT^{+}$,
\item[(iii)]
there is $\tilde{\lambda} \in [-\infty, \infty)$ such that
\begin{equation*}
\tilde{\lambda} = \lim_{\substack{t\to\infty \\ t \in \TT^{+}}}
\frac{1}{t} \ln{\norm{U_\omega(t)w(\omega)}}
\end{equation*}
for any $\omega \in \tilde{\Omega}$, and
\item[(iv)]
\begin{equation*}
\limsup_{\substack{t\to\infty \\ t \in \TT^{+}}} \frac{1}{t}
\ln{\norm{U_\omega(t)u}} \le \tilde{\lambda}
\end{equation*}
for any $\omega \in \tilde{\Omega}$ and any $u \in X \setminus
\{0\}$.
\end{itemize}
$\tilde{\lambda}$ is called the {\em generalized principal Lyapunov
exponent} of $\Phi$ associated to the generalized principal Floquet
subspaces $\{\tilde E(\omega)\}_{\omega\in\tilde\Omega}$.
\end{definition}

Note that the notions of generalized principal Floquet subspaces and
principal Lyapunov exponent are the extensions of principal
eigenspaces and principal eigenvalues of strongly positive and
compact operators. If  $\Phi = ((U_\omega(t))_{\omega \in \Omega, t
\in \TT^{+}}, (\theta_t)_{t\in\TT})$  admits  a family of generalized
Floquet subspaces $\{\tilde{E}(\omega)\}_{\omega \in
\tilde{\Omega}}$, then $[\,\RR \ni t \mapsto U_\omega(t)w(\omega)\,]$
is a nontrivial entire positive orbit, where $U_\omega(t)w(\omega)$
is, for $t < 0$, understood as
$(U_{\theta_{t}\omega}(-t)|_{\tilde{E}_1(\theta_{t}\omega)})^{-1}
w(\omega)$.

\begin{definition}[Generalized exponential separation]
\label{generalized-exponential-separation}
$\Phi = ((U_\omega(t))_{\omega \in \Omega, t \in \TT^{+}},
\allowbreak (\theta_t)_{t\in\TT})$ is said to admit a {\em
generalized exponential separation\/} if there are a family of
generalized principal Floquet subspaces
$\{\tilde{E}(\omega)\}_{\omega \in \tilde{\Omega}}$ and a family of
one\nobreakdash-\hspace{0pt}codimensional subspaces
$\{\tilde{F}(\omega)\}_{\omega \in \tilde{\Omega}}$  of $X$
satisfying the following
\begin{itemize}
\item[(i)]
$\tilde F(\omega) \cap X^{+} = \{0\}$ for any $\omega \in
\tilde{\Omega}$,
\item[(ii)]
$X = \tilde{E}(\omega) \oplus \tilde{F}(\omega)$ for any
$\omega\in\tilde\Omega$, where the decomposition is invariant,
and the family of projections associated with this decomposition
is strongly measurable and tempered,
\item[(iii)]
there exists $\tilde{\sigma} \in (0, \infty]$ such that
\begin{equation*}
\lim_{\substack{t\to\infty \\ t \in \TT^{+}}} \frac{1}{t}
\ln{\frac{\norm{U_{\omega}(t)|_{\tilde{F}(\omega)}}}
{\norm{U_{\omega}(t) w(\omega)}}} = -\tilde{\sigma}
\end{equation*}
for each $\omega \in \tilde{\Omega}$.
\end{itemize}
We say that $\{\tilde{E}(\cdot), \tilde{F}(\cdot), \tilde{\sigma} \}$
{\em generates a generalized exponential separation}.
\end{definition}

We remark that in general the generalized principal Lyapunov exponent
$\tilde\lambda$ associated to the generalized principal Floquet
subspaces $\{\tilde E(\omega)\}_{\omega\in \tilde{\Omega}}$ may be
$-\infty$. The limit in Definition
\ref{generalized-exponential-separation} may not be uniform in
$\omega\in\tilde\Omega$.  The generalized exponential separation is
the extension of the classical exponential separation.

\subsection{Oseledets-type Theorem}
\label{subsection-Oseledets}

In this subsection, we recall an Oseledets-type  theorem  proved in
\cite{Lian-Lu}.
\begin{theorem}
\label{Oseledets-thm}
Let $X$ be a separable Banach space.  Let $\Phi$ be a \mlsps\
satisfying \textup{(A1)(i)--(iii)}. Then there exists an invariant
$\Omega_0 \subset \Omega$, $\PP(\Omega_0) = 1$, with the property
that one of the following \textup{(}mutually exclusive\textup{)}
cases, \textup{(1)}, \textup{(2)} or \textup{(3)}, holds:
\begin{itemize}
\item[{\rm (1)}] For each $\omega\in\Omega_0$,
\begin{equation*}
\lim\limits_{\substack{t\to\infty \\ t \in \TT^{+}}} \frac{1}{t}
\ln{\norm{U_{\omega}(t)}} = -\infty.
\end{equation*}
\item[{\rm (2)}]
There are: $k$ \textup{(}$k \ge 1$\textup{)} real numbers
$\lambda_1 > \dots > \lambda_k$, $k$ measurable families
$\{E_1(\omega)\}_{\omega\in\Omega_0}$, \dots,
$\{E_k(\omega)\}_{\omega \in \Omega_0}$, of vector subspaces of
finite dimensions, and a family
$\{F_{\infty}(\omega)\}_{\omega\in \Omega_0}$ of closed vector
subspaces of finite codimension such that
\begin{itemize}
\item[$\bullet$]
$U_{\omega}(t) E_{i}(\omega) = E_{i}(\theta_t\omega )$
\textup{(}$i = 1,2,\dots,k$\textup{)} and $U_{\omega}(t)
F_{\infty}(\omega) \subset F_{\infty}(\theta_t\omega)$, for
any $\omega\in\Omega_0$ and $t \in \TT^{+}$,
\item[$\bullet$]
$E_1(\omega) \oplus \dots \oplus E_k(\omega) \oplus
F_{\infty}(\omega) = X$ for any $\omega \in \Omega_0$;
moreover, the family of projections associated with the
decomposition $\Bigl(\bigoplus\limits_{j=1}^{i}
E_{j}(\omega)\Bigr) \oplus \Bigl(\bigoplus\limits_{j=i+1}^{k}
E_{j}(\omega) \oplus F_{\infty}(\omega)\Bigr) = X$
\textup{(}$i = 1,2,\dots,k$\textup{)} is strongly measurable
and tempered,
\item[$\bullet$]
\begin{equation*}
\lim\limits_{\substack{t\to\pm\infty \\ t \in \TT}}
\frac{1}{t} \ln{\norm{U_{\omega}(t)|_{E_i(\omega)}}} =
\lim\limits_{\substack{t\to\pm\infty \\ t \in \TT}}
\frac{1}{t} \ln{\norm{U_{\omega}(t)u}} = \lambda_i
\end{equation*}
for any $\omega \in \Omega_0$ and any nonzero $u \in
E_i(\omega)$ \textup{(}$i = 1,\dots, k$\textup{)},
\item[$\bullet$]
\begin{equation*}
\lim\limits_{\substack{t\to\infty \\ t \in \TT^{+}}}
\frac{1}{t} \ln{\norm{U_\omega(t)u}} = {\lambda}_i
\end{equation*}
for any $\omega \in \Omega_0$ and any $u \in (E_i(\omega)
\oplus E_{i+1}(\omega) \oplus \dots \oplus E_k(\omega) \oplus
F_{\infty}(\omega)) \setminus (E_{i+1}(\omega) \oplus \dots
\oplus E_k(\omega) \oplus F_{\infty}(\omega))$ \textup{(}$i =
1,\dots, k-1$\textup{)},
\item[$\bullet$]
\begin{equation*}
\lim\limits_{\substack{t\to\infty \\ t \in \TT^{+}}}
\frac{1}{t} \ln{\norm{U_\omega(t)u}} = {\lambda}_k
\end{equation*}
for any $\omega \in \Omega_0$ and any $u \in (E_k(\omega)
\oplus F_{\infty}(\omega)) \setminus F_{\infty}(\omega)$, and
\item[$\bullet$]
\begin{equation*}
\lim\limits_{\substack{t\to\infty \\ t \in \TT^{+}}}
\frac{1}{t} \ln{\norm{U_{\omega}(t)|_{F_{\infty}(\omega)}}} =
-\infty
\end{equation*}
for any $\omega \in \Omega_0$.
\end{itemize}
\item[{\rm (3)}]
There are a sequence of real numbers $\lambda_1 > \dots >
\lambda_i > \lambda_{i+1} > \dots {}$ having limit $-\infty$,
countably many measurable families $\{E_1(\omega)\}_{\omega \in
\Omega_0}$, $\{E_2(\omega)\}_{\omega \in \Omega_0}$, \dots, of
vector subspaces of finite dimensions, and countably many
families $\{F_1(\omega)\}_{\omega \in \Omega_0}$,
$\{F_2(\omega)\}_{\omega \in \Omega_0}$, \dots, of closed vector
subspaces of finite codimensions such that
\begin{itemize}
\item[$\bullet$]
$U_{\omega}(t) E_{i}(\omega) = E_{i}(\theta_t\omega)$ and
$U_{\omega}(t) F_{i}(\omega) \subset F_{i}(\theta_t\omega)$
\textup{(}$i = 1,2,\dots$\textup{)}, for any $\omega \in
\Omega_0$ and $t \in \TT^{+}$,
\item[$\bullet$]
$E_1(\omega) \oplus \dots \oplus E_i(\omega) \oplus
F_i(\omega) = X$ and $F_i(\omega) = E_{i+1}(\omega) \oplus
F_{i+1}(\omega)$ for any $\omega \in \Omega_0$ \textup{(}$i =
1, 2, \dots$\textup{)}; moreover, the family of projections
associated with the decomposition
$\Bigl(\bigoplus\limits_{j=1}^{i} E_{j}(\omega)\Bigr) \oplus
F_{i}(\omega) = X$ \textup{(}$i = 1,2,\dots$\textup{)} is
strongly measurable and tempered,
\item[$\bullet$]
\begin{equation*}
\lim\limits_{\substack{t\to\pm\infty \\ t \in \TT}}
\frac{1}{t} \ln{\norm{U_{\omega}(t)|_{E_i(\omega)}}} =
\lim\limits_{\substack{t\to\pm\infty \\ t \in \TT}}
\frac{1}{t} \ln{\norm{U_{\omega}(t)u}} = \lambda_i
\end{equation*}
for any $\omega \in \Omega_0$ and any nonzero $u \in
E_j(\omega)$ \textup{(}$i = 1, 2, \dots$\textup{)},
\item[$\bullet$]
\begin{equation*}
\lim\limits_{\substack{t\to\infty \\ t \in \TT^{+}}}
\frac{1}{t} \ln{\norm{U_\omega(t)u}} = {\lambda}_i
\end{equation*}
for any $\omega \in \Omega_0$ and any $u \in (E_i(\omega)
\oplus F_i(\omega)) \setminus F_{i}(\omega)$ \textup{(}$i =
1,2,\dots$\textup{)}, and
\item[$\bullet$]
\begin{equation*}
\lim\limits_{\substack{t\to\infty \\ t \in \TT^{+}}}
\frac{1}{t} \ln{\norm{U_{\omega}(t)|_{F_i(\omega)}}} =
\lambda_{i+1}
\end{equation*}
for any $\omega \in \Omega_0$ \textup{(}$i = 1, 2,
\dots$\textup{)}.
\end{itemize}
\end{itemize}
\end{theorem}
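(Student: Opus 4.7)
The plan is to prove this theorem by combining Kingman's subadditive ergodic theorem with the Goldsheid--Margulis--Thieullen framework for Oseledets decomposition in Banach spaces, exploiting the complete continuity of $U_\omega(1)$ to control the essential spectrum. I would work in the discrete-time setting first (writing $U_\omega^n := U_{\theta_{n-1}\omega}(1)\circ\cdots\circ U_\omega(1)$) and then recover the continuous-time statement from (A1)(i) by the standard interpolation bound $\sup_{0\le s\le 1}\norm{U_{\theta_n\omega}(s)}$ controlling the oscillation between integer times.

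First I would establish the exponents. For each integer $k\ge 1$, the sequence $\omega \mapsto \ln\norm{\Lambda^k U_\omega^n}$ is subadditive along $(\theta_n)$, and (A1)(i) guarantees integrability of the positive part. By Kingman's theorem, $\Lambda_k(\omega) := \lim_{n\to\infty} \frac{1}{n}\ln\norm{\Lambda^k U_\omega^n}$ exists in $[-\infty,\infty)$ almost surely and, by ergodicity, is a constant $\Lambda_k\in[-\infty,\infty)$. The candidate Lyapunov exponents are then $\mu_k := \Lambda_k - \Lambda_{k-1}$ (with $\Lambda_0=0$), which form a non-increasing sequence. In parallel, I would apply Kingman's theorem to the Kuratowski noncompactness norm $\alpha(U_\omega^n)$; by (A1)(iii) this quantity is zero for $n\ge 1$ on a full-measure set, yielding an ``essential exponent'' equal to $-\infty$. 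This is precisely what forces the three-case dichotomy: either $\Lambda_1 = -\infty$ (case (1)), or the strict inequalities $\mu_i > \mu_{i+1}$ occur only finitely many times before $\mu_i$ drops to $-\infty$ (case (2), the finite spectrum case), or they persist with $\mu_i \to -\infty$ (case (3)).

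Next I would build the filtration from below. Define the slow-growth subspaces
\begin{equation*}
F_i(\omega) := \Bigl\{\,u\in X : \limsup_{n\to\infty}\tfrac{1}{n}\ln\norm{U_\omega^n u} \le \lambda_{i+1}\,\Bigr\}
\end{equation*}
(with the convention $\lambda_{i+1}=-\infty$ when needed). These are closed, $U_\omega$-invariant, and Pesin-type block arguments using the strict gap $\lambda_i > \lambda_{i+1}$ together with the compactness bound on $\alpha(U_\omega^n)$ show that $\codim F_i(\omega)$ equals the sum of multiplicities $\dim\Lambda^{*}$-factors $= \dim E_1\oplus\cdots\oplus E_i$. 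The complementary finite-dimensional invariant spaces $E_i(\omega)$ are then constructed by the classical Oseledets trick of taking limits of slow-contracting subspaces under $U_\omega^n$ pulled back through injectivity (A1)(ii): since each $E_i$ is to be $U_\omega$-invariant with $U_\omega|_{E_i}$ nonsingular, one can define $U_\omega^{-n}$ on $E_i$ and show that the intersection $\bigcap_{n} U_{\theta_{-n}\omega}^n(F_{i-1}(\theta_{-n}\omega))$ (with appropriate refinements) yields the desired splitting. Measurability of $E_i, F_i$ and strong measurability of the projections follow from \cite[Lemma~5.6, Corollary~7.3]{Lian-Lu}, since the $v_j$'s produced above are norm-limits of measurable sequences.

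Finally, the exponent identities and tempering would be verified in order: the equalities $\lim_{n\to\pm\infty}\frac{1}{n}\ln\norm{U_\omega^n|_{E_i}} = \lim \frac{1}{n}\ln\norm{U_\omega^n u} = \lambda_i$ for nonzero $u\in E_i$ follow from finite-dimensionality of $E_i$ combined with the subadditive convergence; the formula on $E_i\oplus F_i\setminus F_i$ is a general norm-splitting fact once projections are tempered; and temperedness of the projection norms is a direct consequence of subadditive integrability plus $(\theta_t)$-invariance, via the Birkhoff-type estimate $\frac{1}{n}\ln\norm{P(\theta_n\omega)}\to 0$. The main obstacle I expect is case (3): showing that one can iterate the construction of $(E_i,F_i)$ infinitely often while keeping $\codim F_i$ finite and the exponents descending to $-\infty$. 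This requires carefully combining the Kingman limits for $\Lambda^k U$ with the compactness input from (A1)(iii) to rule out a finite accumulation of the $\lambda_i$'s; all the other steps are, in principle, adaptations of the Mañé--Thieullen scheme to the non-invertible Banach semicocycle setting.
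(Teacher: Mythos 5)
This theorem is not proved in the paper under review: it is stated as a recall of the multiplicative ergodic theorem established by Lian and Lu \cite{Lian-Lu}, as the paper says explicitly in the sentence immediately preceding the statement, and Section~\ref{section-proofs} supplies proofs only for Theorems~\ref{thm-largest-meets-nontrivially}--\ref{theorem-comparison}.  There is therefore no internal proof against which to compare your attempt.  Your sketch is essentially a compressed rendering of the Lian--Lu argument---Kingman on exterior powers, the Kuratowski noncompactness exponent collapsing to $-\infty$ under \textup{(A1)(iii)}, a slow filtration defined via $\limsup$-growth rates, fast spaces recovered by backward pullback under injectivity \textup{(A1)(ii)}, and measurability of the families via their Lemma~5.6 and Corollary~7.3---so it tracks the cited source rather than anything internal to this paper.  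Two caveats are worth flagging.  First, ``applying Kingman to $\ln\alpha(U_\omega^n)$'' when $\alpha(U_\omega^n)\equiv 0$ is not a literal application of the subadditive ergodic theorem (whose hypotheses require real-valued $f_n$ with $(f_1)^+ \in L_1$); it is only shorthand for the trivial observation that the noncompactness exponent is $-\infty$, which is what rules out accumulation of the $\lambda_i$ at a finite value and yields the trichotomy.  Second, the construction of the fast spaces $E_i$ for a merely injective, non-surjective cocycle is the genuinely delicate step: a bare intersection of the form $\bigcap_n U_{\theta_{-n}\omega}^n\bigl(F_{i-1}(\theta_{-n}\omega)\bigr)$ does not by itself produce a subspace of the correct positive dimension, and in \cite{Lian-Lu} this is handled by proving convergence of a sequence of finite-dimensional approximating subspaces, using the complete-continuity hypothesis in an essential way rather than injectivity alone.
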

In the above, for $t = -s$ for some $s \in \TT^{+}$ and $u \in
E_{i}(\omega)$ the symbol $U_{\omega}(t)u$ stands for $v \in
E_{i}(\theta_{t}\omega)$ such that $U_{\theta_{t}\omega}(s)v = u$. In
view of the fact that $U_{\theta_{t}\omega}(s)
E_{i}(\theta_{t}\omega) = E_{i}(\omega)$ and the injectivity
(A1)(ii), such a $v$ is well defined.

In case (2), we write $F_{i}(\omega)$ for $E_{i+1}(\omega) \oplus
\dots \oplus E_{k}(\omega) \oplus F_{\infty}(\omega)$, $i = 1, 2,
\dots, k$.

In literature, $\lambda_i$'s in the cases (2) and (3) are called {\em
Lyapunov exponents\/} and $E_i(\omega)$'s are called the {\em
Oseledets spaces\/} associated to $\lambda_i$'s.

\subsection{Main results}
\label{subsection-main-results}

We state the main results of the paper in this subsection. The first
theorem is on the existence of entire positive orbits.

\begin{theorem}[Entire positive orbits]
\label{thm-largest-meets-nontrivially}
Assume $\Phi$ is a continuous \mlsps\ satisfying \textup{(A0)(i)},
\textup{(A1)(i)--(iii)} and \textup{(A2)}. If Theorem
\ref{Oseledets-thm}(2) or (3) occurs and $X^+$ is total then the set
$\Omega_1$ of those $\omega \in \Omega_0$ such that $E_1(\omega) \cap
X^{+} \varsupsetneq \{0\}$ has $\PP$\nobreakdash-\hspace{0pt}measure
one, and for each $\omega \in \Omega_1$ there exists an entire
positive  orbit $v_{\omega} \colon \TT \to X^+$ of $U_{\omega}$ such
that
\begin{equation*}
v_{\omega}(t) \in (E_1(\theta_{t}\omega) \cap X^{+}) \setminus
\{0\} \quad \forall t \in \TT.
\end{equation*}
\end{theorem}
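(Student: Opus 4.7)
My plan is to produce, via a pullback limit, a nonzero vector in $E_1(\omega) \cap X^+$ for almost every $\omega$, and then extend this to an entire positive orbit by a diagonal extraction.

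First observe that $\Omega_1 := \{\omega \in \Omega_0 : E_1(\omega) \cap X^+ \varsupsetneq \{0\}\}$ is essentially $\theta$-forward invariant: if $v \in E_1(\omega) \cap X^+ \setminus \{0\}$, then $U_\omega(t) v$ lies in $E_1(\theta_t\omega)$ (invariance of the Oseledets splitting), in $X^+$ (by (A2)), and is nonzero (by injectivity (A1)(ii)). Ergodicity reduces the problem to showing $\PP(\Omega_1) > 0$. Let $P_1(\omega)$ denote the Oseledets projection onto $E_1(\omega)$. Since $E_1(\omega) \neq \{0\}$ on $\Omega_0$ and $X^+ - X^+$ is dense in $X$ (totality), separability of $X$ and a countable-density argument yield a single $u \in X^+$ such that $B := \{\omega \in \Omega_0 : P_1(\omega) u \neq 0\}$ has positive measure. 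Now decompose at the moving base point $\theta_{-n}\omega$:
\[
U_{\theta_{-n}\omega}(n) u = a_n(\omega) + b_n(\omega), \quad a_n(\omega) := U_{\theta_{-n}\omega}(n) P_1(\theta_{-n}\omega) u \in E_1(\omega), \quad b_n(\omega) \in F_1(\omega).
\]
Whenever $\theta_{-n}\omega \in B$, injectivity on the invariant finite-dimensional $E_1$ forces $a_n(\omega) \neq 0$. The key claim is that for almost every $\omega$ there is a subsequence $n_k$ with $\theta_{-n_k}\omega \in B$ and $\|b_{n_k}(\omega)\|/\|a_{n_k}(\omega)\| \to 0$. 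Granting this, the unit vectors $a_{n_k}/\|a_{n_k}\|$ in the finite-dimensional space $E_1(\omega)$ admit a sub-subsequence converging to some $v \in E_1(\omega)$ of norm one; the remainders $b_{n_k}/\|a_{n_k}\|$ vanish; and since $(a_{n_k}+b_{n_k})/\|a_{n_k}\| \in \RR_{>0} \cdot X^+$ with $X^+$ closed, we conclude $v \in E_1(\omega) \cap X^+$. This forces $\PP(\Omega_1) = 1$.

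The main obstacle, and where the argument needs the most care, is the key claim. For fixed $\omega' \in B$, Oseledets gives $\tfrac{1}{s}\ln\|U_{\omega'}(s) P_1(\omega') u\| \to \lambda_1$ and $\limsup \tfrac{1}{s}\ln\|U_{\omega'}(s)(I{-}P_1(\omega'))u\| \le \lambda_2$ with tempered error constants; but in the pullback $\omega' = \theta_{-n}\omega$ varies with $n$, so Oseledets is not directly applicable. I would reconcile this by combining: (i) positive upper density of $\{n : \theta_{-n}\omega \in B\}$ for a.e.\ $\omega$, from Birkhoff's theorem applied to $\mathbf{1}_B$ under the backward shift; (ii) temperedness of the Oseledets projections and $\epsilon$-dependent error constants, so the factor $C_\epsilon(\theta_{-n}\omega)/(c_\epsilon(\theta_{-n}\omega)\|P_1(\theta_{-n}\omega) u\|)$ grows at most subexponentially in $n$; and (iii) the spectral gap $\lambda_1 - \lambda_2 > 0$, providing an exponential factor $e^{-(\lambda_1 - \lambda_2 - 2\epsilon) n}$ that absorbs any subexponential error along a suitable subsequence in $\{n : \theta_{-n}\omega \in B\}$.

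For the entire orbit at $\omega \in \Omega_1$, I would run the same pullback at every scale. For $s \in \TT$ with $n + s \ge 0$, set $w_n(s) := U_{\theta_{-n}\omega}(n+s)u / \|a_n(\omega)\| \in X^+$, so that the cocycle identity gives $U_{\theta_s\omega}(t) w_n(s) = w_n(s+t)$. For each fixed $s$, the same estimates (with time $n+s$ and base $\theta_s\omega$) yield convergence along $n_k$ to a limit $v_\omega(s) \in (E_1(\theta_s\omega) \cap X^+) \setminus \{0\}$: the $F_1(\theta_s\omega)$-component still vanishes via the gap estimate, while on the finite-dimensional $E_1$ the isomorphism $(U_{\theta_s\omega}(-s)|_{E_1(\theta_s\omega)})^{-1}$ for $s < 0$ (resp.\ $U_\omega(s)|_{E_1(\omega)}$ for $s \ge 0$) transports the limit $v$. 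A diagonal extraction over a countable dense subset of $\TT$, combined with continuity of $U$ in time and the cocycle identity, produces the required entire positive orbit $v_\omega$.
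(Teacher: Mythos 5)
Your pullback/recurrence argument is genuinely different from the paper's, and the idea is sound, but as written it contains a gap worth flagging. You fix a test vector $u \in X^+$ with $B := \{\omega : P_1(\omega)u \neq 0\}$ of positive measure, and use recurrence of $\{\theta_{-n}\omega\}$ to $B$ together with the spectral gap to drive the $F_1$-component of $U_{\theta_{-n}\omega}(n)u$ to zero relative to its $E_1$-component along a subsequence. The paper instead introduces the intrinsic scalar $\delta(\omega) := \inf\{\,\|P(\omega)u\|/\|u - P(\omega)u\| : u \in X^+ \setminus \{0\}\,\}$ (with $P(\omega)$ the projection onto $\hat{F}_1(\omega)$ along $E_1(\omega)$), derives the forward contraction $\delta(\theta_t\omega) \le (\underline{c}(\omega)/\overline{c}(\omega))\,e^{(\underline{\lambda}-\overline{\lambda})t}\,\delta(\omega)$ directly from the convergence rates at a fixed base $\omega$, and then applies Birkhoff's theorem to conclude $\int \delta\, d\PP = 0$, hence $\delta = 0$ a.e. The paper's route thus only needs the Oseledets constants $\underline{c},\overline{c}$ at one fixed $\omega$ along a forward orbit; your pullback must control $\underline{c}(\theta_{-n}\omega), \overline{c}(\theta_{-n}\omega)$ along the backward orbit. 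You assert this via ``temperedness of the $\epsilon$-dependent error constants,'' but Theorem~\ref{Oseledets-thm} as stated only gives temperedness of the projection family, not of the convergence-rate constants; moreover, the factor $1/\|P_1(\theta_{-n}\omega)u\|$ need not grow subexponentially just because $\theta_{-n}\omega \in B$. Both gaps are closed by one change: shrink $B$ to $B' := \{\omega : \|P_1(\omega)u\| \ge \eta, \ \underline{c}(\omega)/\overline{c}(\omega) \le R\}$, which still has positive measure for suitable $\eta, R > 0$; recurrence to $B'$ bounds all offending factors uniformly, and the exponential gap then dominates outright. For the entire orbit, the paper's construction --- picking one point in the intersection of the nested nonempty compact sets $\mathcal{U}_{\theta_{-n}\omega}(n)(\mathcal{S}_1(E_1(\theta_{-n}\omega) \cap X^+))$ and propagating it by the cocycle using the isomorphism of $U$ on $E_1$ --- is appreciably simpler than your per-$s$ diagonal extraction, though the latter can also be made rigorous.
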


The above theorem shows the existence of an entire positive orbit of
$U_\omega$ for a.e. $\omega\in\Omega$ without the assumption that
$U_\omega$ is strongly positive, which extends the principal
eigenfunction theory for strongly positive and compact operators.
Note that in general $E_1(\omega)\not ={\rm span}\{v_\omega(0)\}$ in
the case that Theorem \ref{Oseledets-thm}(2) or (3) occurs.

Next theorem shows the existence of generalized Floquet subspaces and
principal Lyapunov exponent and the  uniqueness of entire positive
orbits.

\begin{theorem}[Generalized principal Floquet subspace and Lyapunov exponent]
\label{theorem-w}
Assume \textup{(A0)(ii)}, \textup{(A1)(i)}, \textup{(A2)} and
\textup{(A3)}. Then there exist an invariant set $\tilde \Omega_1
\subset \Omega$, $\PP(\tilde{\Omega}_1) = 1$, and an $(\mathfrak{F},
\mathfrak{B}(X))$\nobreakdash-\hspace{0pt}measurable function $w
\colon \tilde{\Omega}_1 \to X$, $w(\omega) \in C_{\mathbf{e}}$ and
$\norm{w(\omega)} = 1$ for all $\omega \in \tilde{\Omega}_1$, having
the following properties:
\begin{itemize}
\item[{\rm (1)}]
\begin{equation*}
w(\theta_{t}\omega) =
\frac{U_{\omega}(t)w(\omega)}{\norm{U_{\omega}(t)w(\omega)}}
\end{equation*}
for any $\omega \in \tilde{\Omega}_1$ and $t \in \TT^{+}$.

\item[{\rm (2)}]
Let for some $\omega \in \tilde{\Omega}_1$ a function $v_{\omega}
\colon \TT \to X^{+} \setminus \{0\}$ be an entire orbit of
$U_{\omega}$.  Then $v_{\omega}(t) = \norm{v_{\omega}(0)}
w_{\omega}(t)$ for all $t \in \TT$, where
\begin{equation*}
w_{\omega}(t) := \begin{cases}
(U_{\theta_{t}\omega}(-t)|_{\tilde{E}_1(\theta_{t}\omega)})^{-1}
w(\omega) & \qquad \text{for } t \in \TT, \ t < 0 \\
U_{\omega}(t) w(\omega) & \qquad \text{for } t \in \TT^{+},
\end{cases}
\end{equation*}
with $\tilde E_1(\omega) = \spanned\{w(\omega)\}$.
\item[{\rm (3)}]
There exists $\tilde{\lambda}_1 \in [-\infty,\infty)$ such that
\begin{equation*}
\tilde{\lambda}_1 = \lim_{\substack{t\to \pm\infty \\ t \in
\TT}} \frac{1}{t} \ln{\rho_{t}(\omega)} =
\int\limits_{\Omega} \ln{\rho_{1}}\, d\PP
\end{equation*}
for each $\omega \in \tilde{\Omega}_1$, where
\begin{equation*}
\rho_t(\omega) := \begin{cases} \norm{U_\omega(t)w(\omega)} & \quad
\text{for } t \ge 0, \\
1/\norm{U_{\theta_{t}\omega}(-t)w(\theta_{t}\omega)} & \quad
\text{for } t < 0.
\end{cases}
\end{equation*}
\item[{\rm (4)}]
Assume, moreover, that \textup{(A1)(ii)--(iii)} hold and that
$X^+$ is total.  If Theorem \ref{Oseledets-thm}(1) occurs, then
$\tilde{\lambda}_1 = -\infty$. If Theorem \ref{Oseledets-thm}(2)
or (3) occurs, then $\tilde{\lambda}_1 = \lambda_1$.  Hence for
any $u \in X\setminus\{0\}$,
\begin{equation*}
\limsup_{\substack{t \to \infty \\ t \in \TT^{+}}} \frac{1}{t}
\ln{\norm{U_\omega(t)u}} \le \tilde{\lambda}_1,
\end{equation*}
and then $\{\tilde E_1(\omega)\}_{\omega \in \tilde{\Omega}_1}$
is a family of generalized Floquet subspaces.
\item[{\rm (5)}]
Assume, moreover, that \textup{(A0)(iii)} holds.  Then for any $u
\in X \setminus \{0\}$,
\begin{equation*}
\limsup_{\substack{t \to \infty \\ t \in \TT^{+}}} \frac{1}{t}
\ln{\norm{U_\omega(t)u}} \le \tilde{\lambda}_1,
\end{equation*}
and then $\{\tilde{E}_1(\omega)\}_{\omega \in \tilde{\Omega}_1}$
is a family of generalized Floquet subspaces.
\end{itemize}
\end{theorem}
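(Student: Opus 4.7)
For Part (1), the plan is to exploit (A3) as a focusing property in the Hilbert projective metric $d_H$ on the component $C_{\mathbf{e}}$, and to obtain $w(\omega)$ as a pullback limit. By (A3), for every $\omega$ and every $u\in X^{+}\setminus\{0\}$ one has $U_{\omega}(1)u\in[\beta(\omega,u)\mathbf{e},\varkappa(\omega)\beta(\omega,u)\mathbf{e}]$, so $U_{\omega}(1)(X^{+}\setminus\{0\})$ has $d_H$-diameter at most $2\ln\varkappa(\omega)$; the Birkhoff--Hopf theorem then yields the contraction ratio $c(\omega):=(\varkappa(\omega)-1)/(\varkappa(\omega)+1)<1$ for $U_{\omega}(1)$ on $C_{\mathbf{e}}$. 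Setting $w_n(\omega):=U_{\theta_{-n}\omega}(n)\mathbf{e}/\norm{U_{\theta_{-n}\omega}(n)\mathbf{e}}$ and using scale-invariance of $d_H$, I estimate
\begin{equation*}
d_H(w_n(\omega),w_{n+1}(\omega)) \le \ln\varkappa(\theta_{-n-1}\omega)\,\prod_{k=1}^{n}c(\theta_{-k}\omega).
\end{equation*}
The condition $\lnplus\ln\varkappa\in L_1(\OFP)$ forces the initial factor to grow subexponentially in $n$ almost surely (via Birkhoff applied to $\lnplus\ln\varkappa$), while Birkhoff applied to the nonpositive function $\ln c$ forces the product to decay exponentially a.s., so $(w_n(\omega))$ is $d_H$-Cauchy and hence norm-Cauchy in $C_{\mathbf{e}}$ on an invariant full-measure set $\tilde\Omega_1$. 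I define $w(\omega):=\lim_n w_n(\omega)$ (normalized); measurability passes to the limit from that of each $w_n$. The equivariance (1) comes from applying $U_\omega(1)$ to $w_n(\omega)$, observing the result is a positive scalar multiple of $w_{n+1}(\theta_1\omega)$, and then renormalizing and iterating.

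For Part (2), given an entire positive orbit $v_\omega$, for each $n\ge 1$ both $v_\omega(0)=U_{\theta_{-n}\omega}(n)v_\omega(-n)$ and $U_{\theta_{-n}\omega}(n)w(\theta_{-n}\omega)$ are $U_{\theta_{-n}\omega}(n)$-images of nonzero positive vectors; after one focusing step both images lie in $C_{\mathbf{e}}$ at mutual $d_H$-distance at most $2\ln\varkappa(\theta_{-n+1}\omega)$, and the remaining $n-1$ applications of the Birkhoff contraction (exactly as in Part (1)) drive their $d_H$-distance to $0$ a.s. as $n\to\infty$, forcing $v_\omega(0)=\norm{v_\omega(0)}w(\omega)$. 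The formula for $w_\omega(t)$ with $t<0$ is then read off by inverting the nonzero one-dimensional map $U_{\theta_t\omega}(-t)|_{\tilde E_1(\theta_t\omega)}$. For Part (3), equivariance from (1) makes $\rho_t$ a multiplicative cocycle, so $\ln\rho_t(\omega)=\sum_{k=0}^{t-1}\ln\rho_1(\theta_k\omega)$ in the discrete case; the continuous case is handled via the envelope $\sup_{0\le s\le 1}\lnplus\norm{U_\omega(s)}\in L_1$ from (A1)(i) together with the cocycle relation. Since $\lnplus\rho_1\le\lnplus\norm{U_\omega(1)}\in L_1(\OFP)$, Birkhoff's theorem for functions with integrable positive part gives the stated limit $\tilde\lambda_1=\int_{\Omega}\ln\rho_1\,d\PP\in[-\infty,\infty)$, both as $t\to+\infty$ and, via invertibility of $(\theta_t)$, as $t\to-\infty$.

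For Part (4), if Theorem \ref{Oseledets-thm}(1) holds then $t^{-1}\ln\norm{U_\omega(t)}\to-\infty$ forces $\tilde\lambda_1=-\infty$; otherwise Theorem \ref{thm-largest-meets-nontrivially} produces an entire positive orbit taking values in $E_1(\theta_t\omega)$, whose growth rate equals $\lambda_1$ by Theorem \ref{Oseledets-thm}(2)/(3), and by Part (2) it must coincide up to a positive scalar with $w_\omega$, so $\tilde\lambda_1=\lambda_1$; the $\limsup$ bound for arbitrary nonzero $u$ is then immediate from the Oseledets splitting since every $u$ has a (well-defined) exponential growth rate $\lambda_j\le\lambda_1$. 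For Part (5), the Banach lattice assumption (A0)(iii) and positivity of $U$ give $\abs{U_\omega(t)u}\le U_\omega(t)\abs{u}$ and hence $\norm{U_\omega(t)u}\le\norm{U_\omega(t)\abs{u}}$; focusing applied to $\abs{u}\in X^{+}\setminus\{0\}$ yields $U_\omega(1)\abs{u}\le\varkappa(\omega)\beta(\omega,\abs{u})\mathbf{e}$, and since $\mathbf{e}\sim w(\theta_1\omega)$ in $C_{\mathbf{e}}$, the growth rate of $\norm{U_{\theta_1\omega}(t-1)\mathbf{e}}$ coincides with that of $\rho_{t-1}(\theta_1\omega)$, delivering $\limsup t^{-1}\ln\norm{U_\omega(t)u}\le\tilde\lambda_1$ via Part (3). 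The main obstacle sits at Part (1): one has to reconcile the weak integrability $\lnplus\ln\varkappa\in L_1$ with the Birkhoff contraction $c(\omega)$, whose $|\!\ln|$ can blow up both as $\varkappa(\omega)\to 1^{+}$ and for large $\varkappa(\omega)$, so that the ergodic averages of $\ln c\circ\theta^k$ indeed force the Cauchy estimate for $d_H(w_n,w_{n+1})$.
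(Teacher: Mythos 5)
Your approach is essentially the paper's: pullback iteration of $\mathbf{e}$ through the normalized cocycle, exponential Cauchy estimate driven by the Birkhoff projective contraction together with the subexponential growth of $\ln\varkappa\circ\theta_{-n}$, followed by Birkhoff (for $\ln\rho_1$) for Part (3), Theorem~\ref{thm-largest-meets-nontrivially} plus uniqueness for Part (4), and lattice domination for Part (5). The paper works with the exact Birkhoff contraction ratio $q(\omega)=\tanh\frac{1}{4}\tau(\omega)$ (Lemmas~\ref{lm-estimate-projective-diameter}--\ref{lemma-measurability}), whereas you substitute the explicit bound $c(\omega)=(\varkappa(\omega)-1)/(\varkappa(\omega)+1)=\tanh\frac{1}{2}\ln\varkappa(\omega)\ge q(\omega)$; this sidesteps the measurability issues that the paper needs Lemma~\ref{lemma-measurability} to handle, at the cost of a (harmless) loss of optimality in the rate. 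In Part (5) you dominate $|u|$ by $\mathbf{e}$ rather than by $w(\omega)$ directly as the paper does; this works but requires the extra observation (as in Lemma~\ref{lemma-positive-inner-product}) that $w(\omega)\ge\mathbf{e}/\varkappa(\theta_{-1}\omega)$, so that $\norm{U_\omega(t)\mathbf{e}}$ and $\rho_t(\omega)$ have the same exponential growth rate.

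The obstacle you flag at the end is not one. First, $\abs{\ln c(\omega)}$ does \emph{not} blow up as $\varkappa(\omega)\to\infty$: $c(\omega)\to 1^{-}$ and $\ln c(\omega)\to 0^{-}$, so the only singularity is at $\varkappa(\omega)\to 1^{+}$, where $\ln c(\omega)\to-\infty$. Second, and more to the point, Birkhoff's theorem (Theorem~\ref{Birkhoff-thm}) applies to any measurable $f$ with $f^{+}\in L_1(\OFP)$ and yields $\frac1n\sum f(\theta_{-k}\omega)\to\int f\,d\PP\in[-\infty,\infty)$. Since $\ln c\le 0$, its positive part is integrable automatically; if $\int\ln c\,d\PP=-\infty$ the product $\prod_{k=1}^{n}c(\theta_{-k}\omega)$ decays super-exponentially, which only strengthens the Cauchy estimate. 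Combined with $\frac1n\ln\varkappa(\theta_{-n-1}\omega)\to 0$ (Lemma~\ref{lemma-zero} applied to $\ln^{+}\ln\varkappa\in L_1$, exactly as the paper does in~\eqref{Birkhoff-3}), the Cauchy estimate goes through with no further reconciliation needed. One small imprecision: in Part (4) not ``every $u$ has a well-defined exponential growth rate $\lambda_j$'' in case (3) of Theorem~\ref{Oseledets-thm} (a $u\in\bigcap_i F_i(\omega)$ need not have one), but the $\limsup\le\lambda_1$ bound you want does hold for every $u\ne 0$ by the subadditive bound on $\norm{U_\omega(t)}$, so the conclusion is unaffected.
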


Observe that $U_{\omega}(t) \tilde{E}_1(\omega) =
\tilde{E}_1(\theta_{t}\omega)$, for any $\omega \in \tilde{\Omega}_1$
and any $t \in \TT^{+}$.  Since $w \colon \tilde{\Omega}_1 \to X$ is
$(\mathfrak{F},
\mathfrak{B}(\RR))$\nobreakdash-\hspace{0pt}measurable,
$\{\tilde{E}_1(\omega)\}_{\omega \in \tilde{\Omega}_1}$ is a
measurable family of one\nobreakdash-\hspace{0pt}dimensional
subspaces of $X$. For $\omega \in \tilde{\Omega}_1$, the function
$w_{\omega} \colon \TT \to X^{+}$ is a nontrivial entire orbit of
$U_{\omega}$.  By Theorem \ref{theorem-w}(2), a nontrivial entire
orbit of $U_\omega$ is unique up~to multiplication by positive
scalar, which extends the fundamental property  on the existence and
uniqueness of positive eigenvectors of compact
$u_0$\nobreakdash-\hspace{0pt}positive linear operators (see
\cite{KeTr} and \cite{Kra}).  Note that Theorem
\ref{Oseledets-thm}(1) may occur under the assumptions of Theorem
\ref{theorem-w}.

The theorem below is a counterpart of Theorem~\ref{theorem-w}  for
the dual system.
\begin{theorem}[Generalized principal Floquet subspace and Lyapunov exponent]
\label{theorem-w-star}
Assume  \textup{(A0)$^*$(ii)}, \textup{(A1)$^{*}$(i)},
\textup{(A2)$^*$} and \textup{(A3)$^*$}. Then there exist an
invariant set $\tilde{\Omega}^{*}_1 \subset \Omega$,
$\PP(\tilde{\Omega}^{*}_1) = 1$, and an $(\mathfrak{F},
\mathfrak{B}(X^{*}))$\nobreakdash-\hspace{0pt}measurable function
$w^{*} \colon \tilde{\Omega}^{*}_1 \to X^{*}$, $w^{*}(\omega) \in
C_{\mathbf{e}^{*}}$ and $\norm{w^{*}(\omega)} = 1$ for all $\omega
\in \tilde{\Omega}^{*}_1$, having the following properties:
\begin{itemize}
\item[{\rm (1)}]
\begin{equation*}
w^{*}(\theta_{-t}\omega) =
\frac{U^{*}_{\omega}(t)w^{*}(\omega)}{\norm{U^{*}_{\omega}(t)w^{*}(\omega)}}
\end{equation*}
for any $\omega \in \tilde{\Omega}^{*}_1$ and $t \in \TT^{+}$.
\item[{\rm (2)}]
Let for some $\omega \in \tilde{\Omega}_1^*$ a function
$v_{\omega}^* \colon \TT \to (X^*)^{+} \setminus \{0\}$ be an
entire orbit of $U_{\omega}^*$.  Then $v_{\omega}^*(t) =
\norm{v_{\omega}^*(0)} w_{\omega}^*(t)$ for all $t \in \TT$,
where
\begin{equation*}
w_{\omega}^*(t) := \begin{cases}
(U_{\theta_{-t}\omega}^*(-t)|_{\tilde{E}_1^*(\theta_{-t}\omega)})^{-1}
w^*(\omega) & \qquad \text{for } t \in \TT, \ t < 0 \\
U_{\omega}^*(t) w^*(\omega) & \qquad \text{for } t \in \TT^{+},
\end{cases}
\end{equation*}
where $\tilde E_1^*(\omega)={\rm span}\{w^*(\omega)\}$.
\item[{\rm (3)}]
There exists $\tilde{\lambda}_1^* \in [-\infty,\infty)$ such that
\begin{equation*}
\tilde{\lambda}_1 ^* = \lim_{\substack{t\to \pm\infty \\ t \in
\TT}} \frac{1}{t} \ln{\rho_{t}^*(\omega)} = \int\limits_{\Omega}
\ln{\rho_{1}^*}\, d\PP
\end{equation*}
for each $\omega \in \tilde{\Omega}_1$, where
\begin{equation*}
\rho_t^*(\omega) := \begin{cases} \norm{U_\omega^*(t)w^*(\omega)} & \quad
\text{for } t \ge 0, \\
1/\norm{U_{\theta_{-t}\omega}^*(-t)w^*(\theta_{-t}\omega)} & \quad
\text{for } t < 0,
\end{cases}
\end{equation*}
\item[{\rm (4)}]
If \textup{(A0)(ii)}, \textup{(A1)(i)}, \textup{(A2)} and
\textup{(A3)} are satisfied, then $\tilde{\lambda}_1 =
\tilde{\lambda}_1^*$.
\end{itemize}
\end{theorem}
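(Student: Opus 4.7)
Parts (1)--(3) follow by applying Theorem~\ref{theorem-w} to the dual \mlsps\ $\Phi^{*}$ over the time-reversed (still ergodic) metric dynamical system $(\OFP,(\theta_{-t})_{t\in\TT})$; the hypotheses (A0)$^{*}$(ii), (A1)$^{*}$(i), (A2)$^{*}$ and (A3)$^{*}$ are the exact dual counterparts of (A0)(ii), (A1)(i), (A2) and (A3), with $\mathbf{e}^{*}$ and $\varkappa^{*}$ playing the roles of $\mathbf{e}$ and $\varkappa$.

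For part~(4), the plan is to couple the primal and dual invariant directions through the pairing $a(\omega):=\langle w(\omega),w^{*}(\omega)\rangle$ and extract the equality from the cocycle identity that it satisfies. As a preliminary I establish $\langle \mathbf{e},\mathbf{e}^{*}\rangle>0$: by the Krein duality recalled in \S\ref{subsection-ordered}, under (A0)(ii) and (A0)$^{*}$(ii) normality on each side forces the opposite cone to be reproducing, so in particular $X^{+}$ is total in $X$. If $\langle \mathbf{e},\mathbf{e}^{*}\rangle$ were zero, the upper focusing bound in (A3) would give $0\le\langle U_{\omega}(1)u,\mathbf{e}^{*}\rangle\le\varkappa(\omega)\beta(\omega,u)\langle\mathbf{e},\mathbf{e}^{*}\rangle=0$ for every nonzero $u\in X^{+}$, and totality combined with the duality identity would upgrade this to $\langle u,U^{*}_{\theta_{1}\omega}(1)\mathbf{e}^{*}\rangle=0$ for every $u\in X$, forcing $U^{*}_{\theta_{1}\omega}(1)\mathbf{e}^{*}=0$ and contradicting the lower focusing bound in (A3)$^{*}$. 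Since $w(\omega)\in C_{\mathbf{e}}$ and $w^{*}(\omega)\in C_{\mathbf{e}^{*}}$, this positivity propagates to $a(\omega)\ge c(\omega)c^{*}(\omega)\langle\mathbf{e},\mathbf{e}^{*}\rangle>0$ on the full-measure $\theta$-invariant set $\tilde{\Omega}_{1}\cap\tilde{\Omega}_{1}^{*}$.

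Applying the duality $\langle U_{\omega}(t)u,u^{*}\rangle=\langle u,U^{*}_{\theta_{t}\omega}(t)u^{*}\rangle$ with $u=w(\omega)$ and $u^{*}=w^{*}(\theta_{t}\omega)$, and invoking the invariance relations $U_{\omega}(t)w(\omega)=\rho_{t}(\omega)w(\theta_{t}\omega)$ (from Theorem~\ref{theorem-w}(1)) and $U^{*}_{\theta_{t}\omega}(t)w^{*}(\theta_{t}\omega)=\rho_{t}^{*}(\theta_{t}\omega)w^{*}(\omega)$ (from part~(1) above), yields the key cocycle identity
\begin{equation*}
\rho_{t}(\omega)\,a(\theta_{t}\omega) \;=\; \rho_{t}^{*}(\theta_{t}\omega)\,a(\omega), \qquad t\in\TT^{+}.
\end{equation*}
Taking logarithms and dividing by $t$, Theorem~\ref{theorem-w}(3) gives $t^{-1}\ln\rho_{t}(\omega)\to\tilde{\lambda}_{1}$, while the dual cocycle formula $U_{\omega}^{*}(n)w^{*}(\omega)=\prod_{k=0}^{n-1}\rho_{1}^{*}(\theta_{-k}\omega)\,w^{*}(\theta_{-n}\omega)$ combined with the change of index $j=n-k$ produces $\ln\rho_{t}^{*}(\theta_{t}\omega)=\sum_{j=1}^{t}\ln\rho_{1}^{*}(\theta_{j}\omega)$, whence Birkhoff's theorem for $\theta$ gives $t^{-1}\ln\rho_{t}^{*}(\theta_{t}\omega)\to\tilde{\lambda}_{1}^{*}$. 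Once the temperedness $t^{-1}\ln a(\theta_{t}\omega)\to 0$ $\PP$-a.s.\ is established, the identity forces $\tilde{\lambda}_{1}=\tilde{\lambda}_{1}^{*}$.

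The main obstacle lies in this last temperedness claim. The focusing bounds yield $-\ln a(\omega)\le\ln\varkappa(\theta_{-1}\omega)+\ln\varkappa^{*}(\theta_{1}\omega)-\ln\langle\mathbf{e},\mathbf{e}^{*}\rangle$, while (A3) and (A3)$^{*}$ only guarantee $\lnplus\ln\varkappa,\ \lnplus\ln\varkappa^{*}\in L_{1}(\OFP)$, which is too weak for a direct Borel--Cantelli conclusion. I plan to split cases: when $\tilde{\lambda}_{1}=-\infty$, a separate comparison using the identity on sublevel sets of $a$ will force $\tilde{\lambda}_{1}^{*}=-\infty$ as well; when $\tilde{\lambda}_{1}$ is finite, $\ln\rho_{1}\in L_{1}(\OFP)$ (since $\lnplus\norm{U_{\omega}(1)}\in L_{1}(\OFP)$ by (A1)(i) and $\int\ln\rho_{1}\,d\PP=\tilde{\lambda}_{1}>-\infty$), and analogously for $\ln\rho_{1}^{*}$, so the coboundary $\ln a-\ln a\circ\theta_{1}=\ln\rho_{1}-\ln\rho_{1}^{*}\circ\theta_{1}$ belongs to $L_{1}(\OFP)$; integrating and exploiting the $\PP$-invariance of $\theta$ then delivers the required cancellation and hence $\tilde{\lambda}_{1}=\tilde{\lambda}_{1}^{*}$.
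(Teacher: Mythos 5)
Parts (1)--(3) are fine and match the paper (the paper just says ``by arguments similar to those in the proof of Theorem~\ref{theorem-w}''). Your preliminary observation that $\langle\mathbf{e},\mathbf{e}^*\rangle>0$ already follows from \textup{(A0)(ii)}, \textup{(A0)$^*$(ii)}, \textup{(A3)}, \textup{(A3)$^*$} is correct and worth making explicit: the paper implicitly uses $\langle w(\omega),w^*(\omega)\rangle>0$ when it writes $\tilde\lambda_1=\lim_t\bigl(-\tfrac1t\ln\rho_{-t}(\omega)+\tfrac1t\ln\langle w(\omega),w^*(\omega)\rangle\bigr)$, but never derives this positivity. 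Your cocycle identity $\rho_t(\omega)a(\theta_t\omega)=\rho_t^*(\theta_t\omega)a(\omega)$ with $a=\langle w,w^*\rangle$ is the same relation the paper exploits (they write it at $\theta_{-t}\omega$, which is equivalent).

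Where you go astray is the final step. You frame the conclusion as requiring the \emph{temperedness} $\tfrac1t\ln a(\theta_t\omega)\to 0$, correctly note that the focusing bounds only give $-\ln a(\omega)\le\ln\varkappa(\theta_{-1}\omega)+\ln\varkappa^*(\theta_1\omega)-\ln\langle\mathbf{e},\mathbf{e}^*\rangle$ with $\ln\varkappa,\ln\varkappa^*$ only assumed to have integrable $\ln^+$, and then resort to a case split and a coboundary-integration argument. This is both incomplete and unnecessarily heavy. The ``$\tilde\lambda_1=-\infty$'' branch (``a separate comparison on sublevel sets of $a$'') is not actually carried out, and the finite branch requires an Atkinson/Poincar\'e-recurrence type lemma that measurable $L^1$-coboundaries integrate to zero, plus some care because $\ln\rho_1^*\in L^1$ is not known a priori.

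The paper's proof bypasses temperedness entirely by a one-line observation: since $\lVert w\rVert=\lVert w^*\rVert=1$, one has $0<a(\omega)\le 1$, hence $\limsup_{t\to\infty}\tfrac1t\ln a(\theta_{\mp t}\omega)\le 0$ automatically. Splitting the cocycle identity using $\lim$/$\limsup$ then gives the two one-sided inequalities
\begin{equation*}
\tilde\lambda_1\ \le\ \tilde\lambda_1^* + \limsup_{t\to\infty}\tfrac1t\ln a(\theta_{-t}\omega)\ \le\ \tilde\lambda_1^*,
\qquad
\tilde\lambda_1^*\ \le\ \tilde\lambda_1 + \limsup_{t\to\infty}\tfrac1t\ln a(\theta_{t}\omega)\ \le\ \tilde\lambda_1,
\end{equation*}
and equality follows with no integrability of $\ln a$, no case analysis, and no coboundary lemma. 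You should replace your case-split plan with this symmetric $\limsup$ trick; it also handles the $\tilde\lambda_1=-\infty$ case without extra work.
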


For $\omega \in \tilde{\Omega}^{*}_1$, define $\tilde{F}_1(\omega) :=
\{\, u \in X: \langle u, w^{*}(\omega) \rangle = 0 \,\}$.  Then
$\{\tilde{F}_1(\omega)\}_{\omega \in \tilde{\Omega}^{*}_1}$ is a
family of one\nobreakdash-\hspace{0pt}codimensional subspaces of $X$,
such that $U_{\omega}(t) \tilde{F}_1(\omega) \subset
\tilde{F}_1(\theta_{t}\omega)$ for any $\omega \in
\tilde{\Omega}^{*}_1$ and any $t \in \TT^{+}$.

Assume, for the moment, (A1)(ii)--(iii).  For $\omega \in
\tilde{\Omega}_1 \cap \tilde{\Omega}^{*}_1$, let $\hat{F}_1(\omega)$
be defined by
\begin{equation}
\label{f-hat-space-eq}
\hat F_1(\omega) =
\begin{cases}
F_{\infty}(\omega) & \text{ if (2) in Theorem \ref{Oseledets-thm}
holds with } k = 1, \\
\bigoplus\limits_{j=2}^{k} E_{j}(\omega) \oplus F_{\infty}(\omega)
& \text{ if (2) in Theorem \ref{Oseledets-thm} holds
with }  k > 1, \\
F_{1}(\omega)  & \text{ if (3) in Theorem \ref{Oseledets-thm} holds.}
\end{cases}
\end{equation}
Further, let $\hat{\lambda}_2$ be defined by
\begin{equation}
\label{lambda-hat-eq}
\hat{\lambda}_2 =
\begin{cases}
\lambda_2 & \text{ if (3) in Theorem~\ref{Oseledets-thm} holds, or if} \\
\null & \text{ (2) in Theorem~\ref{Oseledets-thm} holds with } k
> 1, \\[1ex]
\displaystyle
\lim_{\substack{t\to \infty \\ t \in \TT}} \frac{1}{t}
\ln{\norm{U_\omega(t)|_{\hat F_1(\omega)}}} & \text{ if (2) in
Theorem~\ref{Oseledets-thm} holds with } k = 1.
\end{cases}
\end{equation}

The next theorem shows the existence of generalized exponential
separation.

\begin{theorem}[Generalized exponential separation]
\label{separation-thm}
Assume \textup{(A0)(iii)},  \textup{(A1)(i)}, \textup{(A2)},
\textup{(A0)$^*$(iii)}, \textup{(A1)$^*$(i)}, \textup{(A2)$^*$}, and
\textup{(A4)}.  Then there is an invariant set $\tilde{\Omega}_0$,
$\PP(\tilde{\Omega}_0) = 1$, having the following properties.
\begin{itemize}
\item[{\rm (1)}]
The family $\{\tilde{P}(\omega)\}_{\omega \in \tilde{\Omega}_0}$
of projections associated with the invariant decomposition
$\tilde{E}_1(\omega) \oplus \tilde{F}_1(\omega) = X$ is strongly
measurable and tempered.
\item[{\rm (2)}]
$\tilde F_1(\omega)\cap X^+=\{0\}$ for any
$\omega\in\tilde\Omega_0$.
\item[{\rm (3)}]
For any $\omega \in \tilde{\Omega}_0$ and any $u \in X \setminus
\tilde{F}_1(\omega)$ \textup{(}in~particular, for any nonzero $u
\in X^{+}$\textup{)} there holds
\begin{equation*}
\lim_{\substack{t\to\infty \\ t \in \TT^{+}}} \frac{1}{t}
\ln{\norm{U_{\omega}(t)}} = \lim_{\substack{t\to\infty \\ t \in
\TT^{+}}} \frac{1}{t} \ln{\norm{U_{\omega}(t)u}} =
\tilde{\lambda}_1.
\end{equation*}
\item[{\rm (4)}]
There exist $\tilde{\sigma} \in (0, \infty]$ and
$\tilde{\lambda}_2 \in [-\infty,\infty)$, $\tilde{\lambda}_2 =
\tilde{\lambda}_1 - \tilde{\sigma}$, such that
\begin{equation*}
\lim_{\substack{t\to\infty \\ t \in \TT^{+}}} \frac{1}{t}
\ln{\frac{\norm{U_{\omega}(t)|_{\tilde{F}(\omega)}}}
{\norm{U_{\omega}(t) w(\omega)}}} = - \tilde{\sigma}
\end{equation*}
and
\begin{equation*}
\lim_{\substack{t\to\infty \\ t \in \TT^{+}}} \frac{1}{t}
\ln{\norm{U_{\omega}(t)|_{\tilde{F}_1(\omega)}}} =
\tilde{\lambda}_2
\end{equation*}
for each $\omega \in \tilde{\Omega}_0$.  Hence $\Phi$ admits a
generalized exponential separation.
\item[{\rm (5)}]
Assume moreover \textup{(A1)(ii)--(iii)} and
\textup{(A1)$^{*}$(ii)--(iii)}.  If Theorem
\ref{Oseledets-thm}(2) or (3) occurs, then $\tilde{\lambda}_2 =
\hat{\lambda}_2 < \tilde{\lambda}_1$ and $E_1(\omega) =
\tilde{E}_1(\omega)$ and $\hat{F}_1(\omega) =
\tilde{F}_1(\omega)$ for $\PP$\nobreakdash-\hspace{0pt}a.e.\
$\omega \in \Omega_{0}$.
\item[{\rm (6)}]
If \textup{(A5)} or \textup{(A5)$^*$} holds, then
$\tilde{\lambda}_1 > -\infty$. If additionally
\textup{(A1)(ii)--(iii)} and \textup{(A1)$^{*}$(ii)--(iii)} hold
then Theorem \ref{Oseledets-thm}(2) or (3) occurs.
\end{itemize}
\end{theorem}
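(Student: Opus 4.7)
The plan is to set $\tilde{F}_1(\omega) := \{u\in X: \langle u, w^{*}(\omega)\rangle = 0\}$ using the dual Floquet functional from Theorem \ref{theorem-w-star}, and to extract the exponential gap via a Birkhoff projective-metric contraction made quantitative by (A4).

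For part (1), the pairing $\langle w(\omega), w^{*}(\omega)\rangle$ is almost surely positive: $w(\omega)\in C_{\mathbf{e}}$ and $w^{*}(\omega)\in C_{\mathbf{e}^{*}}$ give $w(\omega)\ge\beta(\omega)\mathbf{e}$ and $w^{*}(\omega)\ge\beta^{*}(\omega)\mathbf{e}^{*}$ for positive measurable scalars, whence $\langle w(\omega), w^{*}(\omega)\rangle \ge \beta(\omega)\beta^{*}(\omega)\langle\mathbf{e},\mathbf{e}^{*}\rangle>0$ by (A4). This yields $X=\tilde{E}_1(\omega)\oplus\tilde{F}_1(\omega)$ together with the strongly measurable projection
\begin{equation*}
\tilde{P}(\omega)u = \frac{\langle u, w^{*}(\omega)\rangle}{\langle w(\omega), w^{*}(\omega)\rangle}\, w(\omega).
\end{equation*}
Its norm is bounded by $1/\langle w(\omega), w^{*}(\omega)\rangle$; relating $\beta,\beta^{*}$ to $1/\varkappa, 1/\varkappa^{*}$ through the focusing identities in (A3), (A3)$^{*}$, and applying Birkhoff's ergodic theorem to $\ln\varkappa,\ln\varkappa^{*}\in L_1(\OFP)$ (by (A4)) gives temperedness $\frac{1}{t}\ln\norm{\tilde{P}(\theta_t\omega)}\to 0$ a.s. For part (2), if $0\neq u\in X^{+}\cap\tilde{F}_1(\omega)$ then focusing gives $U_\omega(1)u\ge\beta\mathbf{e}$, so $\langle U_\omega(1)u, w^{*}(\theta_1\omega)\rangle>0$ using $w^{*}(\theta_1\omega)\ge c\mathbf{e}^{*}$; but Theorem \ref{theorem-w-star}(1) makes $U^{*}_{\theta_1\omega}(1)w^{*}(\theta_1\omega)$ a positive multiple of $w^{*}(\omega)$, so this pairing equals a positive multiple of $\langle u, w^{*}(\omega)\rangle = 0$, a contradiction.

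The heart of the argument is part (4). Let $d_H$ denote the Hilbert projective metric on $C_{\mathbf{e}}$. Focusing says $U_\omega(1)$ maps $X^{+}\setminus\{0\}$ into a subset of $C_{\mathbf{e}}$ of $d_H$-diameter at most $\ln\varkappa(\omega)$, so the classical Birkhoff projective-contraction theorem gives a one-step contraction coefficient $k(\omega)=\tanh(\tfrac{1}{4}\ln\varkappa(\omega))<1$. Since $\ln\varkappa\in L_1(\OFP)$ by (A4) implies $\ln k\in L_1(\OFP)$, Birkhoff's ergodic theorem produces
\begin{equation*}
\tilde{\sigma} := -\int_\Omega \ln k\, d\PP \in (0,\infty],
\end{equation*}
and $\tfrac{1}{t}\ln d_H(U_\omega(t)u_1, U_\omega(t)u_2)\to -\tilde{\sigma}$ a.s. Applying the standard estimate $\norm{v_1/\norm{v_1}-v_2/\norm{v_2}}\le C\,d_H(v_1,v_2)$, valid on $C_{\mathbf{e}}$ under the lattice hypothesis (A0)(iii), to $v_1=U_\omega(t)w(\omega)$ and $v_2=U_\omega(t)(w(\omega)+\epsilon v)$ for unit $v\in\tilde{F}_1(\omega)$, and taking $\epsilon\downarrow 0$, yields
\begin{equation*}
\frac{\norm{U_\omega(t)|_{\tilde{F}_1(\omega)}}}{\norm{U_\omega(t)w(\omega)}} \le C'e^{-\tilde{\sigma} t + o(t)},
\end{equation*}
proving (4) with $\tilde{\lambda}_2 = \tilde{\lambda}_1 - \tilde{\sigma}$. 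Part (3) follows immediately: decomposing $u=\alpha w(\omega)+v$ with $\alpha\neq 0$ and $v\in\tilde{F}_1(\omega)$, the triangle inequality and (4) give $\norm{U_\omega(t)u}=|\alpha|\rho_t(\omega)(1+o(1))$, so the Lyapunov exponent along $u$ is exactly $\tilde{\lambda}_1$.

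For part (5), Theorem \ref{thm-largest-meets-nontrivially} yields an entire positive orbit lying in $E_1(\omega)$, and Theorem \ref{theorem-w}(2) forces any such orbit to be a positive scalar multiple of $w_\omega$, whence $\tilde{E}_1(\omega)\subset E_1(\omega)$; an independent second direction in $E_1$ would produce a second positive entire orbit, so $\tilde{E}_1=E_1$ a.s. If $u\in\tilde{F}_1(\omega)\setminus\hat{F}_1(\omega)$, decomposing along Oseledets as $u=\alpha w+u'$ with $u'\in\hat{F}_1$, $\alpha\neq 0$, yields $\norm{U_\omega(t)u}\sim|\alpha|\rho_t(\omega)$ from the Oseledets estimates, contradicting (4); thus $\tilde{F}_1\subset\hat{F}_1$, and equality of codimensions gives $\tilde{F}_1=\hat{F}_1$, so $\tilde{\lambda}_2=\hat{\lambda}_2$. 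For part (6), since $\mathbf{\overline{e}}\in X^{+}\setminus\{0\}$ lies outside $\tilde{F}_1(\omega)$ by part (2), the rate formula of part (3) gives $\tilde{\lambda}_1 = \lim\tfrac{1}{t}\ln\norm{U_\omega(t)\mathbf{\overline{e}}}$; iterating (A5) to get $U_\omega(n)\mathbf{\overline{e}}\ge\prod_{k=0}^{n-1}\nu(\theta_k\omega)\mathbf{\overline{e}}$ and applying Birkhoff with $\ln^{-}\nu\in L_1(\OFP)$ yields $\tilde{\lambda}_1\ge\int\ln\nu\,d\PP>-\infty$. The symmetric argument on the dual side handles (A5)$^{*}$ via the identification $\tilde{\lambda}_1=\tilde{\lambda}_1^{*}$. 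The added injectivity and complete continuity then exclude case (1) of Theorem \ref{Oseledets-thm}. The main obstacle is the quantitative passage from the $d_H$-contraction to the operator-norm bound in (4): obtaining the quotient $\norm{U_\omega(t)|_{\tilde{F}_1(\omega)}}/\norm{U_\omega(t)w(\omega)}$ rather than a raw projective radius requires the primal and dual lattice structure (A0)(iii), (A0)$^{*}$(iii) to interchange focusing bounds and norm estimates; once that translation is set up, the rest is bookkeeping on Birkhoff averages and the invariance relations carried by $w$ and $w^{*}$.
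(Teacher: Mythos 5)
Your overall skeleton — define $\tilde{F}_1(\omega)$ as the kernel of $w^*(\omega)$, show positivity of the pairing $\langle w,w^*\rangle$ through (A3), (A3)$^*$, (A4), and then run Birkhoff projective contraction to extract the separation — is the same as the paper's, and your arguments for (1), (2), (3), (6) are essentially correct. The critical step, however, is where your argument breaks.

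In part (4) you propose to estimate $\norm{U_\omega(t)|_{\tilde{F}_1(\omega)}}/\norm{U_\omega(t)w(\omega)}$ by comparing $v_1=U_\omega(t)w(\omega)$ with $v_2=U_\omega(t)(w(\omega)+\epsilon v)$ for unit $v\in\tilde{F}_1(\omega)$ inside the Hilbert metric on $C_{\mathbf{e}}$. This fails: $w(\omega)+\epsilon v$ need not lie in $X^{+}$ for any $\epsilon>0$. In a general Banach lattice (e.g.\ $L^p$ with $1\le p<\infty$) the cone has empty interior, and $w(\omega)\in C_{\mathbf{e}}$ supplies only a lower bound $w(\omega)\ge\underline{\alpha}\mathbf{e}$, which does not dominate an arbitrary norm-one $v\in\tilde{F}_1(\omega)$. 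So $d_H(v_1,v_2)$ is simply undefined. The paper gets around this by working not with the projective metric directly but with the \emph{oscillation} $\osc(U_\omega(t)u/U_\omega(t)w(\omega))$, proving an exponential decay estimate for $u\in X^{+}$ (Proposition~\ref{prop-oscillation-estimate}), promoting that into a pointwise bound $\lVert U_\omega(t)u/\rho_t(\omega) - \mu(u,\omega)\,w(\theta_t\omega)\rVert\le C_3e^{Jt}$, identifying $\mu(u,\omega)$ as the coefficient $\langle u,w^*(\omega)\rangle/\langle w(\omega),w^*(\omega)\rangle$ by pairing with $w^*(\theta_n\omega)$, and only then extending to arbitrary $u\in X$ via the lattice decomposition $u=u^{+}-u^{-}$ with $\norm{u^{\pm}}\le\norm{u}$ — this is precisely what (A0)(iii) is needed for, and it avoids any perturbation off the cone.

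There is a second, related issue: you define $\tilde{\sigma}:=-\int_\Omega\ln k\,d\PP$ with $k(\omega)=\tanh(\tfrac14\ln\varkappa(\omega))$. First, the paper's Lemma~\ref{lm-estimate-projective-diameter} gives $\tau(\omega)\le 2\ln\varkappa(\omega)$, so the one-step contraction factor is $q(\omega)=\tanh(\tfrac14\tau(\omega))$, and $\int\ln q\,d\PP$ is only a lower bound on the actual contraction. More importantly, the Birkhoff argument on $\ln q$ gives an \emph{upper} bound on $\frac1t\ln\norm{U_\omega(t)|_{\tilde F_1(\omega)}}/\norm{U_\omega(t)w(\omega)}$, but the theorem asserts that this quotient has an actual \emph{limit}. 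To get the limit one must apply the Kingman Subadditive Ergodic Theorem to $f_n(\omega)=\ln\bigl(\norm{U_\omega(n)|_{\tilde{F}_1(\omega)}}/\norm{U_\omega(n)w(\omega)}\bigr)$, which is what the paper does; your proposal omits this step entirely. Finally, in part (5) the assertion "an independent second direction in $E_1$ would produce a second positive entire orbit" is unjustified; there is no reason an arbitrary direction in $E_1$ yields a positive entire orbit. You already have the correct alternative route — $\tilde{F}_1=\hat{F}_1$ (via the Lyapunov-exponent characterization and codimension), hence $\dim E_1=1$, hence $E_1=\tilde{E}_1$ from $\tilde{E}_1\subset E_1$ — so that extra claim should simply be deleted.
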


The last theorem is about comparison of principal Lyapunov exponents.
\begin{theorem}[Monotonicity]
\label{theorem-comparison}
Let two \mlsps s $\Phi^{(1)} = ((U^{(1)}_{\omega}(t))_{\omega \in
\Omega, t \in \TT^{+}}, (\theta_{t})_{t \in \TT})$ and $\Phi^{(2)} =
((U^{(2)}_{\omega}(t))_{\omega \in \Omega, t \in \TT^{+}},
\allowbreak (\theta_{t})_{t \in \TT})$ have the property that
\begin{equation*}
U^{(1)}_{\omega}(t_0)u \le U^{(2)}_{\omega}(t_0)u
\end{equation*}
for some $t_0 \in \TT^{+} \setminus \{0\}$,
$\PP$\nobreakdash-\hspace{0pt}a.e.\ $\omega \in \Omega$ and all $u
\in X^{+}$. Assume that both $\Phi^{(1)}$ and $\Phi^{(2)}$ satisfy
\textup{(A0)(iii)},  \textup{(A1)(i)}, \textup{(A2)},
\textup{(A0)$^*$(iii)}, \textup{(A1)$^*$(i)}, \textup{(A2)$^*$}, and
\textup{(A4)}.  Then
\begin{equation*}
\tilde{\lambda}_1^{(1)} \le \tilde{\lambda}_1^{(2)},
\end{equation*}
where $\tilde{\lambda}_1^{(i)}$, $i = 1, 2$, denotes the generalized
principal Lyapunov exponent for $\Phi^{(i)}$.
\end{theorem}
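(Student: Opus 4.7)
The plan is to iterate the comparison hypothesis along the orbit $\{\theta_{nt_0}\omega\}_{n \ge 0}$ and exploit the fact that, evaluated on any common nonzero vector of~$X^{+}$, the two generalized principal Lyapunov exponents are realized as limits of the logarithmic growth of the norms.

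I would first apply Theorem~\ref{separation-thm} to each of $\Phi^{(1)}$ and $\Phi^{(2)}$, obtaining invariant full\nobreakdash-\hspace{0pt}measure sets $\tilde{\Omega}_0^{(i)}$, one\nobreakdash-\hspace{0pt}dimensional Floquet subspaces $\tilde{E}_1^{(i)}(\omega)$, one\nobreakdash-\hspace{0pt}codimensional complements $\tilde{F}_1^{(i)}(\omega)$ satisfying $\tilde{F}_1^{(i)}(\omega) \cap X^{+} = \{0\}$, and the exponents $\tilde{\lambda}_1^{(i)} \in [-\infty,\infty)$. Fixing once and for all any nonzero $u \in X^{+}$ (e.g.\ the element $\mathbf{e}$ from (A3) for~$\Phi^{(1)}$), the condition $\tilde{F}_1^{(i)}(\omega) \cap X^{+} = \{0\}$ forces $u \not\in \tilde{F}_1^{(i)}(\omega)$, so part~(3) of that theorem yields, for every $\omega \in \tilde{\Omega}_0^{(1)} \cap \tilde{\Omega}_0^{(2)}$,
\begin{equation*}
\tilde{\lambda}_1^{(i)} \;=\; \lim_{\substack{t\to\infty \\ t \in \TT^{+}}} \frac{1}{t} \ln{\norm{U^{(i)}_\omega(t) u}}, \qquad i = 1, 2.
\end{equation*}

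Next let $\Omega^{*}$ denote the full\nobreakdash-\hspace{0pt}measure set on which the comparison hypothesis $U^{(1)}_\omega(t_0) v \le U^{(2)}_\omega(t_0) v$ holds for every $v \in X^{+}$, and set
\begin{equation*}
\Omega_{\mathrm{good}} \;:=\; \tilde{\Omega}_0^{(1)} \cap \tilde{\Omega}_0^{(2)} \cap \bigcap_{n \ge 0} \theta_{nt_0}^{-1}(\Omega^{*});
\end{equation*}
since $(\theta_t)$ is $\PP$\nobreakdash-\hspace{0pt}preserving and the intersection is countable, $\PP(\Omega_{\mathrm{good}}) = 1$. For $\omega \in \Omega_{\mathrm{good}}$ I would prove by induction on $n \in \NN$ the lattice inequality $U^{(1)}_\omega(nt_0)u \le U^{(2)}_\omega(nt_0)u$: the case $n = 1$ is the hypothesis, and the inductive step follows from the chain
\begin{equation*}
U^{(1)}_{\theta_{nt_0}\omega}(t_0) U^{(1)}_\omega(nt_0)u \;\le\; U^{(1)}_{\theta_{nt_0}\omega}(t_0) U^{(2)}_\omega(nt_0)u \;\le\; U^{(2)}_{\theta_{nt_0}\omega}(t_0) U^{(2)}_\omega(nt_0)u,
\end{equation*}
where the first inequality uses (A2) for $\Phi^{(1)}$ applied to $U^{(2)}_\omega(nt_0)u - U^{(1)}_\omega(nt_0)u \ge 0$, and the second uses the hypothesis at $\theta_{nt_0}\omega \in \Omega^{*}$ applied to $v = U^{(2)}_\omega(nt_0)u \in X^{+}$ (both iterates lying in $X^{+}$ by iterated use of~(A2)).

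Finally, because $(X,X^{+})$ is a Banach lattice under (A0)(iii) its norm is monotonic, so the lattice inequality above yields $\norm{U^{(1)}_\omega(nt_0)u} \le \norm{U^{(2)}_\omega(nt_0)u}$. Taking $\frac{1}{nt_0}\ln$ and letting $n \to \infty$, the sub\nobreakdash-\hspace{0pt}sequential limit along $\{nt_0\}\subset \TT^{+}$ coincides with the full limit identified in step one, so $\tilde{\lambda}_1^{(1)} \le \tilde{\lambda}_1^{(2)}$ drops out, with the convention $\ln 0 = -\infty$ covering any vanishing norms and in particular the case $\tilde{\lambda}_1^{(1)} = -\infty$. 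Rather than a genuine obstacle, the main care needed is the measure\nobreakdash-\hspace{0pt}theoretic bookkeeping that keeps the comparison hypothesis valid simultaneously at every $\theta_{nt_0}\omega$, $n \ge 0$; this is handled cleanly by the countable intersection defining~$\Omega_{\mathrm{good}}$.
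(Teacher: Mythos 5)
Your proof is correct and follows essentially the same route as the paper: identify $\tilde{\lambda}_1^{(i)}$ as the $t\to\infty$ limit of $\frac{1}{t}\ln\norm{U^{(i)}_\omega(t)u}$ for any $u \in X^+\setminus\{0\}$ (via Theorem~\ref{separation-thm}(3) and $\tilde{F}_1^{(i)}(\omega)\cap X^+=\{0\}$), iterate the one-step comparison along the $\theta_{nt_0}$-orbit using (A2) and the cocycle property, and invoke the monotonic norm from (A0)(iii). Your explicit construction of $\Omega_{\mathrm{good}}$ as a countable intersection of pulled-back full-measure sets is the right measure-theoretic underpinning for the inductive step, a detail the paper's terser proof leaves implicit.
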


\section{Preliminaries}
\label{section-preliminaries}
In this section, we present some preliminary materials for the use in
the proofs of the main results, including Birkhoff Ergodic Theorem,
Kingman Subadditive Ergodic Theorem, Hilbert projective metric in
ordered Banach spaces and basic properties, and oscillation ratio,
Birkhoff contraction ratio, and projective diameter of positive
operators in ordered Banach spaces and basic properties.

\subsection{Ergodic theorems}

In this subsection, we recall the  Birkhoff Ergodic Theorem and
Kingman Subadditive Ergodic Theorem.

\begin{theorem}[Birkhoff Ergodic Theorem]
\label{Birkhoff-thm}
$\quad$
\begin{itemize}
\item[{\rm (i)}]
\textup{(Discrete\nobreakdash-\hspace{0pt}Time Case)} Assume that
$(\OFP,(\theta_n)_{n\in\ZZ})$ is a metric
discrete\nobreakdash-\hspace{0pt}time dynamical system.  Let $f
\colon \Omega \to \RR$ be an $(\mathfrak{F},
\mathfrak{B}(\RR))$\nobreakdash-\hspace{0pt}measurable function,
with $f^{+} \in L_1(\OFP)$.  Then there exist
\begin{itemize}
\item[\textbullet]
an invariant set $\Omega_1 \subset \Omega$, with
$\PP(\Omega_1) = 1$, and
\item[\textbullet]
an invariant $(\mathfrak{F},
\mathfrak{B}(\RR))$\nobreakdash-\hspace{0pt}measurable
function $f_{\mathrm{av}}$, with $(f_{\mathrm{av}})^{+} \in
L_1(\OFP)$,
\end{itemize}
such that
\begin{equation*}
\lim\limits_{n \to \infty} \frac{1}{n}
\sum\limits_{i=0}^{n-1} f(\theta_{i}\omega) =
\lim\limits_{n \to \infty} \frac{1}{n}
\sum\limits_{i=0}^{n-1} f(\theta_{-i}\omega) =
f_{\mathrm{av}}(\omega)
\end{equation*}
for all $\omega \in \Omega_1$.  Moreover,
\begin{equation*}
\int\limits_{\Omega} f_{\mathrm{av}} \, d\PP =
\int\limits_{\Omega} f \, d\PP \in [-\infty,\infty).
\end{equation*}
If $(\OFP,(\theta_n)_{n\in\ZZ})$ is ergodic then
$f_{\mathrm{av}}$ is constantly equal to $\int_{\Omega} f \,
d\PP$.
\item[{\rm (ii)}]
\textup{(Continuous\nobreakdash-\hspace{0pt}Time Case)} Assume
that $(\OFP,(\theta_t)_{t\in\RR})$ is a metric flow.  Let $f
\colon \Omega \to \RR$ be an $(\mathfrak{F},
\mathfrak{B}(\RR))$\nobreakdash-\hspace{0pt}measurable function,
with $f^{+} \in L_1(\OFP)$.  Then there exist
\begin{itemize}
\item[\textbullet]
an invariant set $\Omega_1 \subset \Omega$, with
$\PP(\Omega_1) = 1$, and
\item[\textbullet]
an invariant $(\mathfrak{F},
\mathfrak{B}(\RR))$\nobreakdash-\hspace{0pt}measurable
function $f_{\mathrm{av}}$, with $(f_{\mathrm{av}})^{+}
\allowbreak \in \allowbreak L_1(\OFP)$,
\end{itemize}
such that
\begin{equation*}
\lim\limits_{t \to \infty} \frac{1}{t}
\int\limits_{0}^{t} f(\theta_{s}\omega) \, ds = \lim\limits_{t
\to \infty} \frac{1}{t} \int\limits_{-t}^{0} f(\theta_{s}\omega)
\, ds = f_{\mathrm{av}}(\omega)
\end{equation*}
for all $\omega \in \Omega_1$.  Moreover,
\begin{equation*}
\int\limits_{\Omega} f_{\mathrm{av}} \, d\PP =
\int\limits_{\Omega} f \, d\PP \in [-\infty,\infty).
\end{equation*}
If $(\OFP,(\theta_t)_{t\in\RR})$ is ergodic then
$f_{\mathrm{av}}$ is constantly equal to $\int_{\Omega} f \,
d\PP$.
\end{itemize}
\end{theorem}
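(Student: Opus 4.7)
The plan is to treat the discrete--time case first via the classical maximal ergodic lemma, then bootstrap to the continuous--time case, and finally handle the backward averages in both settings by applying the forward result to $(\theta_{-t})_{t\in\TT}$ (or $(\theta_{-n})_{n\in\ZZ}$), which is again a metric dynamical system with the same invariant sets and the same integrability hypotheses on $f$.

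For the discrete--time case under the stronger hypothesis $f \in L_1(\OFP)$, I would follow the standard Garsia--Hopf approach: set $S_n f(\omega) := \sum_{i=0}^{n-1} f(\theta_i\omega)$ and $M_n f(\omega) := \max_{1\le k\le n} S_k f(\omega)$, and prove the maximal ergodic inequality $\int_{\{M_n f > 0\}} f \, d\PP \ge 0$ by the telescoping identity $S_{k+1}f(\omega) = f(\omega) + S_k f(\theta_1\omega)$. From this, a standard argument shows that for every $\alpha \in \RR$ the invariant sets $\{\limsup_n S_nf/n > \alpha\}$ and $\{\liminf_n S_nf/n < \alpha\}$ have the right measure--theoretic properties to force $\limsup_n S_nf/n = \liminf_n S_nf/n$ $\PP$--a.s. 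The conditional expectation $f_{\mathrm{av}} := \mathbb{E}[f \mid \mathcal{I}]$, where $\mathcal{I}$ is the $\sigma$--algebra of $(\theta_n)$--invariant sets, is then identified as the common limit, and $\int f_{\mathrm{av}}\,d\PP = \int f\,d\PP$ by the defining property of conditional expectation.

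To drop the hypothesis to $f^{+} \in L_1(\OFP)$, I would use a monotone truncation: set $f_N(\omega) := \max\{f(\omega),-N\}$ so that $f_N \in L_1(\OFP)$, apply the integrable case to each $f_N$, and let $N \to \infty$. By monotone convergence and the fact that $(f_N)_{\mathrm{av}} \searrow f_{\mathrm{av}} \in [-\infty,\infty)$ with $(f_{\mathrm{av}})^{+} \le (f^{+})_{\mathrm{av}} \in L_1(\OFP)$, one obtains the claimed limit (possibly $-\infty$--valued) on a full--measure invariant set. Ergodicity of $(\theta_n)$ reduces $\mathcal{I}$ to $\PP$--trivial sets, so $f_{\mathrm{av}} = \int f \, d\PP$ a.s.

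For the continuous--time case I would reduce to the discrete case by the familiar trick: define $g(\omega) := \int_0^1 f(\theta_s\omega)\,ds$, which is $(\mathfrak{F},\mathfrak{B}(\RR))$--measurable by Fubini and satisfies $g^{+} \in L_1(\OFP)$ since $\int_{\Omega} g^{+}\,d\PP \le \int_{\Omega}\!\int_0^1 f^{+}(\theta_s\omega)\,ds\,d\PP = \int_{\Omega} f^{+}\,d\PP$ by $\theta_s$--invariance of $\PP$. Apply the discrete result to $g$ with the time--one map $\theta_1$ to get an invariant set on which $\frac{1}{n}\sum_{i=0}^{n-1} g(\theta_i\omega)$ converges; since this sum equals $\int_0^n f(\theta_s\omega)\,ds$, and writing $t = n+r$ with $r\in[0,1)$ the difference $\frac{1}{t}\int_0^t f(\theta_s\omega)\,ds - \frac{1}{n}\int_0^n f(\theta_s\omega)\,ds$ tends to $0$ a.s.\ (the leftover $\int_n^t f \circ \theta_s\,ds$ is controlled by another Birkhoff application to the integrable majorant $\sup_{0\le r\le 1} |f \circ \theta_r|^{+}$ after truncation as above). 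The backward averages are then obtained verbatim by applying the forward theorem to the time--reversed system $(\theta_{-t})_{t\in\TT}$. The main technical obstacle is the lower--truncation step: ensuring that the limit is genuinely $f_{\mathrm{av}} \in [-\infty,\infty)$ and that all exceptional nullsets can be amalgamated into a single invariant full--measure $\Omega_1$ on which both forward and backward averages converge to the same value.
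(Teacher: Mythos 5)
The paper does not prove Theorem~\ref{Birkhoff-thm}: it is recalled as a classical result in the preliminaries section (with Krengel's book among the references) and used as a black box. Your outline is therefore an independent reconstruction of the standard textbook proof, and it is largely sound --- the Garsia/Hopf maximal ergodic lemma, identification of the limit with $\mathbb{E}[f\mid\mathcal{I}]$, truncation to handle $f^{+}\in L_1(\OFP)$ with $f$ possibly not integrable, the Fubini trick $g(\omega):=\int_{0}^{1}f(\theta_s\omega)\,ds$ to descend from continuous to discrete time, and time reversal for the backward averages are all the right ingredients.

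There is, however, one real gap in your truncation step. Setting $f_N:=\max\{f,-N\}$ gives $f_N\ge f$ and $f_N\searrow f$, so the integrable case applied to each $f_N$ yields only the one--sided bound $\limsup_n \frac{1}{n}\sum_{i=0}^{n-1}f(\theta_i\omega)\le (f_N)_{\mathrm{av}}(\omega)$, hence $\limsup\le f_{\mathrm{av}}$ after letting $N\to\infty$. But you also need $\liminf_n \frac{1}{n}\sum_{i=0}^{n-1}f(\theta_i\omega)\ge f_{\mathrm{av}}(\omega)$, and monotone truncation from below does not give this directly, since the error term $\frac{1}{n}\sum_{i}(f_N-f)(\theta_i\omega)$ need not be controllable when $f^{-}\notin L_1(\OFP)$. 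The standard repair is to observe that each set $B_M:=\{f_{\mathrm{av}}\ge -M\}$ is invariant and that on $B_M$ one has $\int_{B_M}f\,d\PP=\int_{B_M}f_{\mathrm{av}}\,d\PP\ge -M\,\PP(B_M)>-\infty$, so $f$ is genuinely integrable with respect to $\PP$ restricted to $B_M$; the $L_1$ Birkhoff theorem applied on $B_M$ then gives two--sided convergence there, and taking the union over $M$ covers $\{f_{\mathrm{av}}>-\infty\}$ (on the complementary invariant set the one--sided $\limsup$ bound already forces the limit to be $-\infty$). Equivalently you can split $f=f^{+}-f^{-}$, handle $f^{+}\in L_1$ directly, and for the nonnegative part $f^{-}$ truncate from above by $\min\{f^{-},N\}$ and use the same restriction--to--invariant--sets device for the upper bound. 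With this addition your proof is complete; the rest of the outline, including the observation that a single invariant full--measure $\Omega_1$ can be extracted by intersecting the (invariant) exceptional sets from the forward, backward, discrete and continuous applications, is correct.
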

\begin{lemma}
\label{lemma-zero}
Assume that $(\OFP, (\theta_t)_{t\in\TT})$ is an ergodic metric
dynamical system.  Then for each $f \in L_1(\OFP)$ the set of those
$\omega \in \Omega$ for which
\begin{equation*}
\lim\limits_{\substack {t \to \pm \infty \\ t \in \TT}} \frac{1}{t}
f(\theta_{t}\omega) = 0
\end{equation*}
has $\PP$\nobreakdash-\hspace{0pt}measure one.
\end{lemma}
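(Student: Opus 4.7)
The plan is to apply the Birkhoff Ergodic Theorem (Theorem \ref{Birkhoff-thm}) to the integrable function $|f|$ and then extract pointwise information about $f(\theta_t\omega)$ from the convergence of its Cesaro averages by the standard ``single-term $=$ partial sum at $n+1$ minus partial sum at $n$'' trick.

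First I would handle the discrete-time case $\TT = \ZZ$. Setting $s_n(\omega) := \frac{1}{n}\sum_{i=0}^{n-1}|f(\theta_i\omega)|$, Theorem \ref{Birkhoff-thm}(i), together with the ergodicity hypothesis, provides an invariant full-measure set $\Omega_1$ on which $s_n(\omega) \to L := \int_{\Omega}|f|\,d\PP \in [0,\infty)$ and the analogous backward average $\frac{1}{n}\sum_{i=0}^{n-1}|f(\theta_{-i}\omega)|$ also converges to $L$. Because $L$ is finite, the identity
\begin{equation*}
\frac{|f(\theta_n\omega)|}{n} \;=\; \frac{n+1}{n}\,s_{n+1}(\omega) \;-\; s_n(\omega)
\end{equation*}
forces the left-hand side to converge to $L-L=0$ on $\Omega_1$; the symmetric computation handles $n \to -\infty$. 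This settles the lemma for $\TT = \ZZ$.

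For the continuous-time case $\TT = \RR$, my plan is to reduce to the discrete step via the unit-interval integral. The function $G(\omega) := \int_0^1 |f(\theta_s\omega)|\,ds$ lies in $L_1(\OFP)$ by Tonelli (using the $\theta_s$-invariance of $\PP$), and applying the discrete argument above to $G$ under the $\PP$-preserving map $\theta_1$ yields $\frac{1}{n}\int_n^{n+1}|f(\theta_s\omega)|\,ds = G(\theta_n\omega)/n \to 0$ almost surely; here it is important that even if $\theta_1$ is not ergodic, Birkhoff's theorem still delivers convergence of the Cesaro averages of $G$ to a finite $L_1$ limit, so the telescoping identity carries through. The main obstacle is upgrading this integral-average decay over $[n,n+1]$ to pointwise decay of $|f(\theta_t\omega)|/t$ for \emph{all} $t \to \pm\infty$, since \textit{a priori} $|f|$ could spike on a thin subset of each unit interval. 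I would close this gap by observing that the set $\{\omega : \limsup_{t\to\infty}|f(\theta_t\omega)|/t > 0\}$ is $(\theta_t)_{t\in\TT}$-invariant (the limsup is unchanged by shifting the starting point), hence of $\PP$-measure $0$ or $1$ by ergodicity, and then ruling out measure $1$ by combining the integral-average decay with the continuous Birkhoff theorem applied to $|f|$ itself.
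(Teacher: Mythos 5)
Your discrete-time argument is correct and clean: applying the Birkhoff Ergodic Theorem to $|f|$ gives convergence of the Cesaro averages $s_n(\omega)$ to the finite constant $\int_\Omega|f|\,d\PP$ on a full-measure invariant set, and the telescoping identity $|f(\theta_n\omega)|/n = \frac{n+1}{n}s_{n+1}(\omega) - s_n(\omega)$ then forces $|f(\theta_n\omega)|/n \to 0$ there, with the symmetric computation for $n \to -\infty$.

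The continuous-time case, however, has a genuine gap, and the proposed fix does not close it. You correctly observe that $\frac{1}{n}\int_n^{n+1}|f(\theta_s\omega)|\,ds = G(\theta_n\omega)/n \to 0$ does not bound individual values $|f(\theta_t\omega)|$ for $t$ near $n$, because $|f|$ may spike on a thin subset of the unit interval. Your remedy — that the set $\{\omega : \limsup_{t\to\infty}|f(\theta_t\omega)|/t > 0\}$ is invariant, hence of measure $0$ or $1$, and that the measure-$1$ alternative can be ruled out by "combining the integral-average decay with the continuous Birkhoff theorem applied to $|f|$" — cannot work, because both of those tools control only time-integrals of $|f|$ along the orbit, never the pointwise values $f(\theta_t\omega)$. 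In fact the bad set can be all of $\Omega$: take $\Omega = [0,1)$ with Lebesgue measure $\PP$ and the translation flow $\theta_t\omega = (\omega+t) \bmod 1$ (an ergodic metric flow), and let $f(\omega) = \omega^{-1/2} \in L_1(\OFP)$. For every $\omega$ and every integer $n \ge 2$ the choice $t_n = n - \omega + n^{-4}$ gives $\theta_{t_n}\omega = n^{-4}$, so $f(\theta_{t_n}\omega)/t_n \sim n^2/n \to \infty$; hence $\limsup_{t\to\infty}f(\theta_t\omega)/t = \infty$ for every $\omega$. So mere membership of $f$ in $L_1(\OFP)$ is not enough for the continuous-time conclusion. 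The standard hypothesis that makes the continuous-time statement provable (and is exactly the form of integrability the paper imposes in (A1)(i) for $\TT = \RR$) is that $g(\omega) := \sup_{0 \le s \le 1}|f(\theta_s\omega)|$ belongs to $L_1(\OFP)$; under that hypothesis your telescoping argument applied to $g$ gives $g(\theta_n\omega)/n \to 0$ a.s., and then for $t \in [n,n+1]$ one has $|f(\theta_t\omega)| = |f(\theta_{t-n}\theta_n\omega)| \le g(\theta_n\omega)$, which supplies the missing interpolation from integers $n$ to real $t$. Since the paper states the lemma without proof, there is no reference argument to compare against; the above is how the gap in the blind proof would actually have to be filled.
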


\begin{theorem}[Kingman Subadditive Ergodic Theorem]
\label{Kingman-thm}
Assume that $(\OFP, \allowbreak (\theta_n)_{n\in\ZZ})$ is a metric
discrete\nobreakdash-\hspace{0pt}time dynamical system.  Let
$(f_n)_{n=1}^{\infty}$, $f_n \colon \Omega \to \RR$, be a sequence of
$(\mathfrak{F},
\mathfrak{B}(\RR))$\nobreakdash-\hspace{0pt}measurable functions,
with $(f_1)^{+} \in L_1(\OFP)$, such that
\begin{equation*}
f_{m+n}(\omega) \le f_{m}(\omega) + f_{n}(\theta_{m}\omega) \qquad
\text{for any }m, n \in \NN \text{ and any } \omega \in \Omega.
\end{equation*}
Then there exist
\begin{itemize}
\item
an invariant set $\Omega_1 \subset \Omega$, with $\PP(\Omega_1) =
1$, and
\item
an invariant $(\mathfrak{F},
\mathfrak{B}(\RR))$\nobreakdash-\hspace{0pt}measurable function
$f_{\mathrm{av}}$, with $(f_{\mathrm{av}})^{+} \in L_1(\OFP)$,
\end{itemize}
such that
\begin{equation*}
\lim\limits_{n \to \infty} \frac{1}{n} f_{n}(\omega) =
f_{\mathrm{av}}(\omega)
\end{equation*}
for all $\omega \in \Omega_1$.  Moreover,
\begin{equation*}
\int\limits_{\Omega} f_{\mathrm{av}} \, d\PP = \lim\limits_{n \to
\infty} \frac{1}{n} \int\limits_{\Omega} f_n \, d\PP = \inf\limits_{n
\in \NN} \frac{1}{n} \int\limits_{\Omega} f_n \, d\PP \in
[-\infty,\infty).
\end{equation*}
If $(\OFP, (\theta_n)_{n\in\ZZ})$ is ergodic then $f_{\mathrm{av}}$
is constantly equal to $\lim_{n\to\infty}(1/n)\int_{\Omega} f_n \,
d\PP$.
\end{theorem}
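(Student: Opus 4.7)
The plan is to carry out the stopping-time/covering proof of Kingman's theorem in the spirit of Katznelson and Weiss. I would first produce the candidate almost-sure limit from the subadditive structure, pin down an upper bound for it via Fekete and reverse Fatou, and then close the remaining gap by a greedy covering argument.

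To begin, set $\bar f(\omega) := \liminf_{n\to\infty} f_n(\omega)/n$ and $\underline f(\omega) := \limsup_{n\to\infty} f_n(\omega)/n$. The inequality $f_{n+1}(\omega) \le f_1(\omega) + f_n(\theta_1\omega)$ (subadditivity with $m=1$) rearranges to $f_n(\theta_1\omega)/n \ge f_{n+1}(\omega)/n - f_1(\omega)/n$; passing to the liminf gives $\bar f \circ \theta_1 \ge \bar f$ a.s., and $\theta_1$-invariance of $\PP$ forces equality almost surely. Hence $\bar f$ is $\theta_1$-invariant and, under ergodicity, a.s.\ equal to a constant $\gamma^-$; similarly $\underline f$ equals a constant $\gamma^+$. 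Iterated subadditivity yields $f_n(\omega) \le \sum_{i=0}^{n-1} f_1(\theta_i\omega)$, so $f_n^+ \in L_1(\OFP)$ and $a_n := \int_\Omega f_n\, d\PP \in [-\infty,\infty)$ is well defined. Integrating $f_{m+n} \le f_m + f_n \circ \theta_m$ and using $\theta_m$-invariance of $\PP$ gives $a_{m+n} \le a_m + a_n$, so Fekete's subadditive-sequence lemma produces $L := \inf_{n} a_n/n = \lim_n a_n/n \in [-\infty,\infty)$. The same pointwise upper bound shows $(f_n/n)^+$ is dominated by the Birkhoff average of $f_1^+$, so reverse Fatou yields $L \le \int \underline f\, d\PP = \gamma^+$, and in particular $\gamma^+ < \infty$.

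The main step is the reverse direction $\gamma^+ \le \gamma^-$, which combined with the trivial $\gamma^- \le \gamma^+$ forces $\gamma^- = \gamma^+$ and then pins the common value to $L$. Assume first $\gamma^- \in \RR$ and fix $\varepsilon > 0$. Define the stopping time
\begin{equation*}
N(\omega) := \min\{\, n \ge 1 : f_n(\omega) \le n(\gamma^- + \varepsilon)\,\},
\end{equation*}
which is a.s.\ finite by the definition of $\bar f = \gamma^-$. Pick $M$ so large that $\PP(A_M) < \varepsilon$, where $A_M := \{N > M\}$. Traverse $\{0,1,\dots,n-1\}$ greedily: at position $j$, if $\theta_j\omega \notin A_M$ jump by the block of length $N(\theta_j\omega) \le M$ and invoke $f_{N(\theta_j\omega)}(\theta_j\omega) \le N(\theta_j\omega)(\gamma^- + \varepsilon)$; if $\theta_j\omega \in A_M$ advance by a single step controlled by $f_1^+(\theta_j\omega)$. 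Iterated subadditivity then gives
\begin{equation*}
f_n(\omega) \;\le\; n(\gamma^- + \varepsilon) \;+\; \sum_{\substack{0 \le j < n \\ \theta_j\omega \in A_M}} f_1^+(\theta_j\omega) \;+\; R_M(\omega, n),
\end{equation*}
where $R_M(\omega,n) = O(M)$ absorbs the last unfinished block. Dividing by $n$, letting $n \to \infty$, and applying Birkhoff to the integrable $\mathbf{1}_{A_M}\cdot f_1^+$ yields $\gamma^+ \le \gamma^- + \varepsilon + \int_{A_M} f_1^+\, d\PP$. Sending $M \to \infty$ makes the integral vanish by absolute continuity of $\int f_1^+\, d\PP$; then $\varepsilon \to 0$ closes the inequality. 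The case $\gamma^- = -\infty$ is handled identically, with $\gamma^- + \varepsilon$ replaced by an arbitrary $-K$ and $K \to \infty$ at the end.

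The principal obstacle I anticipate is the bookkeeping in the greedy-covering step: one must verify that ``bad-step'' contributions from $A_M$ integrate to a quantity of order $\int_{A_M} f_1^+\, d\PP$ rather than contributing a positive linear density, and that $R_M(\omega,n)/n \to 0$ as $n \to \infty$ for each fixed $M$. Both points depend on $f_1^+ \in L_1$ and on the strict order of limits ($n \to \infty$ before $M \to \infty$). Once $\gamma^- = \gamma^+ = L$ is in hand, the a.s.\ convergence $f_n/n \to L$ and the identity $\int f_{\mathrm{av}}\, d\PP = L$ follow, the latter from Fekete combined with the dominated bound $(f_n/n)^+ \le (1/n)\sum_{i=0}^{n-1} f_1^+(\theta_i\omega)$. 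In the non-ergodic setting the same argument is applied conditionally on the $\sigma$-algebra $\mathcal I$ of $\theta_1$-invariant sets, producing an $\mathcal I$-measurable $f_{\mathrm{av}}$ which equals $\bar f = \underline f$ a.s.\ and whose integral is $L$.
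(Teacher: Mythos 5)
The paper does not actually give a proof of this statement: Theorem~\ref{Kingman-thm} is simply recalled as a standard classical result (cf.\ \cite{Kre}, \cite{Arn}) and then invoked, e.g.\ in the proof of Theorem~\ref{separation-thm}(4). So there is no ``paper's own proof'' to compare against; I evaluate your Katznelson--Weiss--style argument on its own merits.

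Your overall plan is sound, and the greedy-covering step is correctly set up. The argument for the invariance of $\liminf f_n/n$ and $\limsup f_n/n$, the stopping-time construction giving $\gamma^{+}\le\gamma^{-}$ under ergodicity, and the treatment of the $\gamma^{-}=-\infty$ case are all in order. But there is a genuine gap at the final pinning-down step. What you have in hand at the end is: (a) $\gamma^{-}=\gamma^{+}=:\gamma$ (from the stopping-time argument), and (b) $L\le\gamma$ (from reverse Fatou applied to $f_n/n$, dominated above by the Birkhoff averages of $f_1^{+}$). Nowhere do you establish the reverse inequality $\gamma\le L$. Reverse Fatou, and also the integrated version of your stopping-time bound ($a_n/n\le\gamma^{-}+\varepsilon+\int_{A_M}f_1^{+}+O(M/n)$, then $n\to\infty$), both give only $L\le\gamma^{-}$ --- the same direction. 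Since you have no lower dominating function ($f_1^{-}$ is not assumed integrable), you cannot invoke Fatou or dominated/uniform-integrability arguments from below, so the phrase ``Once $\gamma^{-}=\gamma^{+}=L$ is in hand'' is premature: you have $\gamma^{-}=\gamma^{+}$ and $L\le\gamma$, not equality.

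The missing ingredient is the fixed-block-length argument: for each $m\ge 1$, write $n=km+r$ with $0\le r<m$ and use subadditivity over blocks of length $m$ to get
\begin{equation*}
f_n(\omega)\le \sum_{j=0}^{k-1} f_m(\theta_{jm}\omega) + \sum_{i=km}^{n-1} f_1^{+}(\theta_{i}\omega).
\end{equation*}
Dividing by $n$, sending $n\to\infty$, and applying the Birkhoff Ergodic Theorem to $f_m$ under the transformation $\theta_m$ (the $\theta_m$-average is $\theta_m$-invariant with integral $a_m$, even though $\theta_m$ need not be ergodic) gives, after integrating, $\gamma^{+}\le a_m/m$. Taking the infimum over $m$ yields $\gamma^{+}\le L$, which together with your $L\le\gamma^{-}$ closes the loop and produces $\gamma^{-}=\gamma^{+}=L=\int f_{\mathrm{av}}\,d\PP$. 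A minor additional slip: ``reverse Fatou yields $L\le\gamma^{+}$, and in particular $\gamma^{+}<\infty$'' does not follow --- $L\le\gamma^{+}$ only bounds $\gamma^{+}$ from below. Finiteness of $\gamma^{+}$ comes directly from the dominating Birkhoff average, i.e.\ $\gamma^{+}\le\int f_1\le\int f_1^{+}<\infty$, which is the $m=1$ case of the very fixed-block argument you omitted.
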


\subsection{Hilbert Projective Metric}

Throughout this subsection, we assume that $(X,X^{+})$ is an ordered
Banach space. We recall the concept of Hilbert projective metric and
present some basic properties.

\begin{definition}
\label{hilbert-metric-def}
\begin{itemize}
\item[{\rm (1)}]
For given $u, v \in X$, if $\{\,\underline{\alpha} \in
\RR:\underline{\alpha} v \le u\,\}$ is nonempty, define
\begin{equation*}
m(u/v) := \sup\{\,\underline{\alpha} \in \RR: \underline{\alpha}
v \le u\,\}.
\end{equation*}
If $\{\overline{\alpha} \in \RR: u \le \overline{\alpha} v\,\}$
is nonempty, define
\begin{equation*}
M(u/v) := \inf\{\,\overline{\alpha} \in \RR: u \le
\overline{\alpha} \,v\}.
\end{equation*}
\item[{\rm (2)}]
For given $u, v \in X$, if both $m(u/v)$ and $M(u/v)$ exist,
define
\begin{equation*}
\osc(u/v) := M(u/v) - m(u/v)
\,\,\,{\rm and}\,\,\, d(u,v) := \ln{\frac{M(u/v)}{m(u/v)}}.
\end{equation*}
$\osc(u/v)$ is called the {\em oscillation\/} of $u$ over $v$ and
$d(u,v)$ is called the {\em projective distance\/} between $u$
and $v$.
\end{itemize}
\end{definition}

It should be noted that for comparable $u, v \in X^{+} \setminus
\{0\}$  we have the following alternative:
\begin{itemize}
\item
either
\begin{equation*}
m(u/v) v < u < M(u/v) v,
\end{equation*}
\item
or there is $\alpha > 0$ such that $v = {\alpha}u$.
\end{itemize}

The following lemma follows easily.

\begin{lemma}
\label{lemma-projective-distance}
\begin{itemize}
\item[{\rm (1)}]
$d(u,v) = d(v,u)$ if $u, v \in X^{+} \setminus \{0\}$ and $u \sim
v$.
\item[{\rm (2)}]
$d(\lambda u,\mu v) = d(u,v)$ if $u, v \in X^{+} \setminus
\{0\}$, $u \sim v$, and $\lambda,\mu > 0$.
\item[{\rm (3)}]
$d(u,v) \le d(u,w) + d(w,v)$ if $u, v, w \in X^{+} \setminus
\{0\}$ and $u \sim v \sim w$.
\item[{\rm (4)}]
For any two $u,v \in X^{+} \setminus \{0\}$, $d(u,v) = 0$ implies
the existence of $\alpha > 0$ such that $v = {\alpha}u$.
\item[{\rm (5)}]
$\osc(\lambda u/\mu v) = \abs{\lambda} \mu^{-1} \osc(u/v)$ if
$\osc(u/v)$ exists, $u, v \in X^{+} \setminus \{0\}$, $\lambda
\in \RR \setminus \{0\}$ and $\mu > 0$.
\item[{\rm (6)}]
$\osc(u+v/w) \le \osc(u/w) + \osc(v/w)$ if $u, v, w \in X^{+}
\setminus \{0\}$ and $u \sim w$, $v \sim w$.
\end{itemize}
\end{lemma}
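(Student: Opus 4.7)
The plan is to verify each clause directly from the definitions of $m(\cdot/\cdot)$, $M(\cdot/\cdot)$, $\osc(\cdot/\cdot)$ and $d(\cdot,\cdot)$, exploiting the fact that for comparable $u,v \in X^+ \setminus \{0\}$ the numbers $m(u/v)$ and $M(u/v)$ are both finite and strictly positive, so the ratios and logarithms involved are legitimate. The dichotomy stated just before the lemma (either $m(u/v) v < u < M(u/v) v$, or $v$ is a positive scalar multiple of $u$) will be the key input for (4); everything else reduces to manipulating scalar inequalities.

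For (1) I would first establish the reciprocal identities $m(u/v) = 1/M(v/u)$ and $M(u/v) = 1/m(v/u)$: since $m(u/v) > 0$ and $\alpha v \le u$ is equivalent to $v \le (1/\alpha) u$ for $\alpha > 0$, the supremum over such $\alpha$ becomes the infimum over $1/\alpha$. Substituting into the definition of $d$ yields $d(u,v) = \ln(M(v/u)/m(v/u)) = d(v,u)$. For (2) (and analogously the $\lambda > 0$ case of (5)), the inequality $\alpha\mu v \le \lambda u$ is equivalent, since $\lambda > 0$, to $(\alpha\mu/\lambda) v \le u$, so $m(\lambda u/\mu v) = (\lambda/\mu)m(u/v)$ and similarly $M(\lambda u/\mu v) = (\lambda/\mu)M(u/v)$; the scalar cancels in the ratio. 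For the $\lambda < 0$ case of (5), dividing by $\lambda$ reverses the inequality, so the roles of $m$ and $M$ swap, giving $m(\lambda u/\mu v) = (\lambda/\mu) M(u/v)$ and $M(\lambda u/\mu v) = (\lambda/\mu) m(u/v)$; subtracting produces the factor $-\lambda/\mu = |\lambda|/\mu$.

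For (3) the main step is the multiplicative submultiplicativity $M(u/v) \le M(u/w) M(w/v)$ and $m(u/v) \ge m(u/w) m(w/v)$: from $u \le M(u/w) w$ and $w \le M(w/v) v$, positivity of the second coefficient lets me multiply to obtain $u \le M(u/w) M(w/v) v$; the bound on $m$ is symmetric. (Strict attainment can be avoided by approximating by $M(u/w)+\varepsilon$ if needed.) Dividing the $M$ bound by the $m$ bound and taking logarithms yields the triangle inequality. For (4), the hypothesis $d(u,v)=0$ presupposes that $m(u/v)$ and $M(u/v)$ exist and are equal; the dichotomy recalled above then rules out the strict alternative $m(u/v) v < u < M(u/v) v$ (which would give $m(u/v) v < m(u/v) v$), so $v = \alpha u$ for some $\alpha > 0$.

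For (6) I would add the lower bounds: $m(u/w) w \le u$ and $m(v/w) w \le v$ give $(m(u/w) + m(v/w)) w \le u + v$, hence $m((u+v)/w) \ge m(u/w) + m(v/w)$. The symmetric argument with upper bounds yields $M((u+v)/w) \le M(u/w) + M(v/w)$. Subtracting the first from the second produces $\osc(u+v/w) \le \osc(u/w) + \osc(v/w)$. The only place requiring genuine care is (3), where one must confirm that all the quantities being multiplied are strictly positive and finite — which is automatic from the comparability assumptions — and (4), where one must invoke the preceding dichotomy rather than attempt to prove $v = \alpha u$ from scratch. All other steps are single-line manipulations of order inequalities.
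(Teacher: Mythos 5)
Your proof is correct, and since the paper provides no argument (it states only that the lemma ``follows easily''), your verification from the definitions is exactly the intended one: the reciprocal identities $m(u/v)=1/M(v/u)$ and $M(u/v)=1/m(v/u)$ for (1), the scaling identities $m(\lambda u/\mu v)=(\lambda/\mu)m(u/v)$, $M(\lambda u/\mu v)=(\lambda/\mu)M(u/v)$ (with $m$ and $M$ swapping when $\lambda<0$) for (2) and (5), the submultiplicativity $M(u/v)\le M(u/w)M(w/v)$, $m(u/v)\ge m(u/w)m(w/v)$ for (3), the dichotomy stated just before the lemma for (4), and additivity of the $m$- and $M$-bounds for (6). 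One small simplification is available: because $X^{+}$ is closed, the sup defining $m$ and the inf defining $M$ are always attained, so the $\varepsilon$-approximation you mention in (3) is unnecessary.
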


\begin{lemma}
\label{lemma-projective-vs-norm}
Assume that $X^{+}$ is normal.  Then for any $u, v \in X^{+}$, $u
\sim v$, with $\norm{u} = \norm{v} = 1$, there holds
\begin{equation*}
\norm{u-v} \le 3 \bigl( e^{d(u,v)}-1 \bigr).
\end{equation*}
If $(X, X^{+})$ is a Banach lattice then for any $u, v \in X^{+}$, $u
\sim v$, with $\norm{u} = \norm{v} = 1$, there holds
\begin{equation*}
\norm{u-v} \le e^{d(u,v)}-1 \bigr.
\end{equation*}
\end{lemma}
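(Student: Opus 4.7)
The plan is to exploit the sandwich $m(u/v)\,v \le u \le M(u/v)\,v$ together with the fact that the norm on $X$ is monotonic (the standing convention attached to (A0)(ii) for normally ordered Banach spaces). Set $m := m(u/v)$ and $M := M(u/v)$, so that $d(u,v) = \ln(M/m)$ and $e^{d(u,v)} - 1 = (M-m)/m$. Since $\norm{u} = \norm{v} = 1$, the relations $mv \le u$ and $u \le Mv$ together with monotonicity force $m \le 1 \le M$. The degenerate case $v = \alpha u$ for some $\alpha > 0$ forces $\alpha = 1$ by the normalisation, so $u = v$ and both inequalities are trivially satisfied; henceforth assume $m < M$.

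For the normal case I would decompose
\[
u - v \;=\; (u - mv) \,+\, (m - 1)\,v,
\]
noting that $0 \le u - mv \le (M-m)\,v$. Monotonicity of the norm gives $\norm{u - mv} \le (M-m)\norm{v} = M - m$, and $\norm{(m-1)v} = 1-m$, so $\norm{u-v} \le (M-m) + (1-m)$. Since $M \ge 1$ we have $1-m \le M-m$, whence $\norm{u-v} \le 2(M-m)$, and because $m \le 1$ the inequality $2(M-m) \le 3(M-m)/m$ (equivalent to $2m \le 3$) then produces $\norm{u-v} \le 3(e^{d(u,v)} - 1)$.

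When $(X, X^+)$ is a Banach lattice I would sharpen the constant by passing to the modulus. From $(m-1)v \le u - v \le (M-1)v$ and the nonnegativity of $(M-1)v$ and $(1-m)v$ one obtains $(u-v)^+ \le (M-1)v$ and $(u-v)^- \le (1-m)v$, so
\[
\abs{u-v} \;=\; (u-v)^+ + (u-v)^- \;\le\; (M-m)\,v.
\]
The lattice-norm property $\abs{x} \le \abs{y} \Rightarrow \norm{x} \le \norm{y}$ (applied with $y = (M-m)v$, which equals its own modulus) then yields $\norm{u-v} \le (M-m)\norm{v} = M-m$, and $M-m \le (M-m)/m = e^{d(u,v)} - 1$ since $m \le 1$.

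There is no substantive obstacle: once one records $m \le 1 \le M$, the whole argument reduces to the elementary inequalities $1-m \le M-m$, $2m \le 3$, and in the lattice case to the identity $\norm{x} = \norm{\abs{x}}$. The only point worth care is the normalisation step that pins down $m \le 1 \le M$, because this is what turns the \emph{multiplicative} projective estimate $M/m$ into an \emph{additive} bound on $\norm{u-v}$.
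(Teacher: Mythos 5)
Your proof is correct. The paper itself does not contain an argument for this lemma; the proof in the text is simply the citation ``See~\cite[Proposition~1.2.1]{Eve}.'' So there is nothing in-paper to compare against, and a self-contained verification is exactly what is wanted here. Your normalisation step $m := m(u/v) \le 1 \le M := M(u/v)$, pinned down by the monotonicity convention attached to normally ordered spaces, is the right pivot, and the two decompositions --- $u - v = (u - mv) + (m-1)v$ with the monotone norm in the normal case, and $|u-v| \le (u-v)^+ + (u-v)^- \le (M-m)v$ with the lattice norm in the lattice case --- both check out. One small remark: your chain of estimates in the normal case actually yields $\norm{u-v} \le 2(M-m) \le 2\bigl(e^{d(u,v)}-1\bigr)$, which is strictly sharper than the constant $3$ in the statement; you only relax to $3$ in the last step to match the claim. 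That relaxation is harmless but worth noticing. A minor caveat not in your argument but worth keeping in mind: the paper's definition of $u^+$ on page~\pageref{subsection-ordered} reads $u^+ := u \wedge 0$, which is almost certainly a typo for $u \vee 0$; you (correctly) use the standard conventions $u^+ = u \vee 0$, $u^- = (-u) \vee 0$ so that $|u| = u^+ + u^-$ and the lattice-norm monotonicity apply as stated.
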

\begin{proof}
See \cite[Proposition 1.2.1]{Eve}.
\end{proof}

\begin{lemma}
\label{lemma-continuity}
Suppose that $u,v, u_k, v_k \in X^{+} \setminus \{0\}$ for $k = 1, 2,
\dots$. If  $u_k \sim v_k$ for $k = 1, 2, \dots$,
$(1/m(u_k/v_k))_{m=1}^{\infty}$ and $(M(u_k/v_k))_{m=1}^{\infty}$ are
bounded sequences, and $u_k \to u$, $v_k \to v$ as $k \to \infty$,
then $u \sim v$ and $d(u,v) \le \liminf_{k\to\infty} d(u_k,v_k)$.
\end{lemma}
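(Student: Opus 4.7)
The plan is to prove the two assertions — comparability $u \sim v$ and the lower semicontinuity estimate — by a subsequence extraction argument, exploiting the closedness of the cone $X^{+}$.

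First I would set $m_k := m(u_k/v_k)$ and $M_k := M(u_k/v_k)$. By hypothesis, $(1/m_k)$ and $(M_k)$ are bounded, so there exist constants $0 < c \le C < \infty$ with $c \le m_k \le M_k \le C$ for all $k$ (using that $u_k \sim v_k$ forces $m_k \le M_k$ on the common side). Pick a subsequence $(k_j)$ such that $d(u_{k_j}, v_{k_j}) \to \liminf_{k\to\infty} d(u_k, v_k)$, and, passing to a further subsequence if necessary (still denoted $(k_j)$), arrange that $m_{k_j} \to m^{*}$ and $M_{k_j} \to M^{*}$ with $m^{*} \in [c, C]$ and $M^{*} \in [c, C]$, so in particular $m^{*} > 0$ and $M^{*} < \infty$.

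Next I would pass to the limit in the cone inequalities. By definition $u_{k_j} - m_{k_j} v_{k_j} \in X^{+}$ and $M_{k_j} v_{k_j} - u_{k_j} \in X^{+}$; since $u_k \to u$ and $v_k \to v$ in norm and $m_{k_j}, M_{k_j}$ converge, we obtain $u - m^{*} v \in X^{+}$ and $M^{*} v - u \in X^{+}$ by closedness of $X^{+}$, that is,
\begin{equation*}
m^{*} v \le u \le M^{*} v .
\end{equation*}
Because $m^{*} > 0$, this already shows $u \sim v$. Moreover, by the very definitions of $m(u/v)$ and $M(u/v)$ as a sup and an inf respectively,
\begin{equation*}
m(u/v) \ge m^{*} > 0 \qquad \text{and} \qquad M(u/v) \le M^{*}.
\end{equation*}

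Finally I would combine these two inequalities to conclude
\begin{equation*}
d(u,v) = \ln \frac{M(u/v)}{m(u/v)} \le \ln \frac{M^{*}}{m^{*}} = \lim_{j\to\infty} \ln \frac{M_{k_j}}{m_{k_j}} = \lim_{j\to\infty} d(u_{k_j}, v_{k_j}) = \liminf_{k\to\infty} d(u_k, v_k),
\end{equation*}
which is the desired estimate. The argument is essentially routine; the only point requiring mild care is ensuring that the limiting quantity $m^{*}$ is strictly positive (so that $m(u/v)$ is well defined and $u \sim v$), and this is exactly where the boundedness hypothesis on $(1/m(u_k/v_k))$ is used. No deeper obstacle appears, since the closedness of $X^{+}$ immediately transports the finite-$k$ order relations to the limit.
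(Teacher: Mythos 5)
Your proof is correct and follows essentially the same subsequence-extraction argument as the paper. The only cosmetic difference is that the paper works with $\epsilon$-approximations $\underline{\alpha}_k,\overline{\alpha}_k$ near $m(u_k/v_k)$ and $M(u_k/v_k)$ rather than with those extremal values directly, which sidesteps having to observe that the defining sup and inf are actually attained; you use the attained values $m_k,M_k$ directly, which is fine because the closedness of $X^{+}$ guarantees $m_k v_k \le u_k \le M_k v_k$ (a fact you invoke implicitly when you write ``by definition'' — strictly speaking it is a one-line consequence of closedness, not a literal part of the definition). Either route is valid, and yours is marginally more streamlined by combining the comparability and liminf conclusions into a single subsequence pass.
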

\begin{proof}
First, let $M > 1$ be such that
\begin{equation*}
m(u_k/v_k) \ge \frac{1}{M-1} \quad {\rm and}\quad
M(u_k/v_k) \le M-1 \quad \text{for} \quad k = 1, 2, \dots.
\end{equation*}
Then there are $\frac{1}{M} \le \underline{\alpha}_k$,
$\overline{\alpha}_k \le M$ such that
\begin{equation*}
\underline{\alpha}_k v_k \le u_k \le \overline{\alpha}_k v_k.
\end{equation*}
By (C2), $\underline{\alpha}_k \le \overline{\alpha}_k$.  Without
loss of generality, we may assume that $\underline{\alpha}_k \to
\underline{\alpha}$ and $\overline{\alpha}_k \to \overline{\alpha}$
as $n \to \infty$. Then $0 < \underline{\alpha} \le
\overline{\alpha}$ and
\begin{equation*}
\underline{\alpha} v \le u \le \overline{\alpha} v.
\end{equation*}
Hence $u \sim v$.

For any $\epsilon > 0$ there are $\frac{1}{M} \le
\underline{\alpha}_k \le \overline{\alpha}_k \le M$ such that
\begin{equation*}
\underline{\alpha}_k v_k \le u_k \le \overline{\alpha}_k v_k
\quad\text{and}\quad d(u_k,v_k) \ge
\ln{\frac{\overline{\alpha}_k}{\underline{\alpha}_k}} - \epsilon.
\end{equation*}
Assume that $k_l \to \infty$ is such that
\begin{equation*}
\lim_{k\to\infty} d(u_{k_l},v_{k_l}) = \liminf_{k\to\infty} d(u_k,v_k)
\end{equation*}
and
\begin{equation*}
\underline{\alpha}_{k_l} \to \underline{\alpha}, \quad
\overline{\alpha}_{k_l} \to \overline{\alpha} \quad \text{as} \quad k
\to \infty.
\end{equation*}
Then
\begin{equation*}
\underline{\alpha} v \le u \le \overline{\alpha} v \quad \text{and}
\quad d(u,v) \le \ln{\frac{\overline{\alpha}}{\underline{\alpha}}}.
\end{equation*}
Therefore
\begin{equation*}
d(u,v) \le \liminf_{k\to\infty} d(u_k,v_k) + \epsilon
\end{equation*}
for any $\epsilon > 0$, and hence
\begin{equation*}
d(u,v) \le \liminf_{k\to\infty} d(u_k,v_k).
\end{equation*}
\end{proof}

\subsection{Oscillation ratio, Birkhoff contraction ratio, and
projective diameter}

Throughout this subsection, we assume that $(X,X^{+})$ is an ordered
Banach space and that $\Phi = ((U_\omega(t))_{\omega\in\Omega, t \in
\TT^{+}}, (\theta_t)_{t\in\TT})$ is a \mlsps\ on $X$ covering
$(\theta_{t})_{t \in \TT}$, satisfying (A2). At some places (A3) will
be assumed.

\begin{definition}
\label{oscillation-ratio-def}
\begin{itemize}
\item[{\rm (1)}]
For $\omega \in \Omega$ define
\begin{equation*}
p(\omega) := \sup_{\substack{u,v \in X^{+} \\ u \sim v \\ u
\ne {\alpha} v}} \frac{\osc(U_\omega(1)u/U_\omega(1)
v)}{\osc(u/v)}.
\end{equation*}
$p(\omega)$ is called the {\em oscillation ratio\/} of
$U_\omega(1)$.
\item[{\rm (2)}]
For $\omega \in \Omega$ define
\begin{equation*}
q(\omega) := \sup_{\substack{u,v \in X^{+} \\ u \sim v \\ u
\ne {\alpha} v}} \frac{d(U_\omega(1)u,U_\omega(1)v)}{d(u,v)}.
\end{equation*}
$q(\omega)$ is called the {\em Birkhoff contraction ratio\/} of
$U_\omega(1)$.
\item[{\rm (3)}]
For $\omega \in \Omega$ define
\begin{equation*}
\tau(\omega) := \sup_{\substack{u,v \in X^{+} \\
U_\omega(1)u \sim U_\omega(1) v}} d
(U_\omega(1)u,U_\omega(1)v).
\end{equation*}
$\tau(\omega)$ is called the {\em projective diameter\/} of
$U_\omega(1)$.
\end{itemize}
\end{definition}
The functions $p^{*}$, $q^{*}$ and $\tau^{*}$ for the dual $\Phi^{*}$
are defined in an analogous way.

\begin{lemma}
\label{lm-projective-diameter-nonincreasing}
For any $\omega \in \Omega$, any $t \in \TT^{+}$ and any $u, v \in
X^{+} \setminus \{0\}$ with $u \sim v$ there holds
\begin{gather*}
m(U_\omega(t)u/U_\omega(t)v) \ge m(u/v), \\
M(U_\omega(t)u/U_\omega(t)v) \le M(u/v), \\
\osc(U_\omega(t)u/U_\omega(t)v) \le \osc(u/v), \\
d(U_{\omega}(t)u, U_{\omega}(t)v) \le d(u,v).
\end{gather*}
\end{lemma}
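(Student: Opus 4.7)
The plan is a direct application of the positivity assumption \textup{(A2)} together with the linearity of $U_{\omega}(t)$, which together make the operators order-preserving in both directions: if $\underline{\alpha} v \le u$ and $\underline{\alpha} \ge 0$, then $\underline{\alpha}\, U_{\omega}(t) v = U_{\omega}(t)(\underline{\alpha} v) \le U_{\omega}(t) u$ (and analogously for negative $\underline{\alpha}$, applying \textup{(A2)} to $-\underline{\alpha} v \le 0 \le -\underline{\alpha} v + u$). The situation for upper bounds is symmetric.

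First I would start from $u \sim v$ and fix $\underline{\alpha} \in \RR$ with $\underline{\alpha} v \le u$. Applying $U_{\omega}(t)$ and using (A2) plus linearity, I get $\underline{\alpha}\, U_{\omega}(t) v \le U_{\omega}(t) u$, so every admissible $\underline{\alpha}$ in the definition of $m(u/v)$ is also admissible for $m(U_{\omega}(t)u/U_{\omega}(t)v)$; taking suprema yields
\begin{equation*}
m(U_{\omega}(t) u / U_{\omega}(t) v) \ge m(u/v).
\end{equation*}
A parallel argument with $u \le \overline{\alpha} v$ gives
\begin{equation*}
M(U_{\omega}(t) u / U_{\omega}(t) v) \le M(u/v).
\end{equation*}
In particular, since $u \sim v$ guarantees $0 < m(u/v) \le M(u/v) < \infty$, the numbers $m$ and $M$ on the left are finite and positive (unless $U_{\omega}(t)v = 0$, in which case $U_{\omega}(t)u = 0$ too and there is nothing to prove), so $U_{\omega}(t) u \sim U_{\omega}(t) v$ whenever both images are nonzero.

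Subtracting the two inequalities gives
\begin{equation*}
\osc(U_{\omega}(t) u / U_{\omega}(t) v) = M(U_{\omega}(t) u / U_{\omega}(t) v) - m(U_{\omega}(t) u / U_{\omega}(t) v) \le M(u/v) - m(u/v) = \osc(u/v),
\end{equation*}
and dividing the $M$-bound by the $m$-bound and taking logarithms yields
\begin{equation*}
d(U_{\omega}(t) u, U_{\omega}(t) v) = \ln \frac{M(U_{\omega}(t) u / U_{\omega}(t) v)}{m(U_{\omega}(t) u / U_{\omega}(t) v)} \le \ln \frac{M(u/v)}{m(u/v)} = d(u,v).
\end{equation*}
There is no real obstacle here: the statement is essentially the standard observation that positive linear maps are nonexpansive with respect to the Hilbert projective metric. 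The only mild subtlety is bookkeeping the degenerate case $U_{\omega}(t)v = 0$ (handled as above) and ensuring that the admissibility sets defining $m$ and $M$ are nonempty after applying $U_{\omega}(t)$, which is automatic from $u \sim v$.
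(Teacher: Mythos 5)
Your proof is correct and takes essentially the same approach as the paper: apply (A2) to the defining inequalities $\underline{\alpha} v \le u \le \overline{\alpha} v$ to get the corresponding bounds for $U_\omega(t)u$ and $U_\omega(t)v$, then read off the inequalities for $m$, $M$, $\osc$, and $d$. The extra remarks about negative $\underline{\alpha}$ and the degenerate case $U_\omega(t)v = 0$ are harmless bookkeeping that the paper omits.
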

\begin{proof}
For any $\underline{\alpha}, \overline{\alpha} > 0$ with
\begin{equation*}
\underline{\alpha} v \le u \le \overline{\alpha} v,
\end{equation*}
by (A2),
\begin{equation*}
\underline{\alpha} U_{\omega}(t)v \le U_{\omega}(t)u \le
\overline{\alpha} U_\omega(t)v \quad \forall \, \omega \in \Omega,\
t \in \TT^{+}.
\end{equation*}
This implies that
\begin{equation*}
m(U_\omega(t)u/U_\omega(t)v) \ge m(u/v) \quad \text{and} \quad
M(U_\omega(t)u/U_\omega(t)v) \le M(u/v),
\end{equation*}
thus
\begin{equation*}
\osc(U_\omega(t)u/U_\omega(t)v) \le \osc(u/v) \quad \text{and} \quad
d(U_{\omega}(t)u, U_{\omega}(t)v) \le d(u,v).
\end{equation*}
\end{proof}

The next four results will be formulated for both $\Phi$ and its dual
$\Phi^{*}$, we will however formulate their proofs for $\Phi$ only.
\begin{lemma}
\label{lm-estimate-projective-diameter}
Assume moreover \textup{(A3)} and \textup{(A3)$^*$} .  For each
$\omega \in \Omega$ and each $u \in X^{+} \setminus \{0\}$, $u^{*}
\in (X^{*})^{+} \setminus \{0\}$ there holds $U_\omega(1)u \sim
\mathbf{e}$, $U^{*}_\omega(1)u^{*} \sim \mathbf{e}^{*}$, and
\begin{equation*}
d(U_\omega(1)u, \mathbf{e}) \le \ln{\varkappa(\omega)}, \
d(U_\omega(1)u^{*}, \mathbf{e}^{*}) \le \ln{\varkappa^{*}(\omega)}.
\end{equation*}
Consequently, $\tau(\omega) \le 2 \ln{\varkappa(\omega)}$ and
$\tau^{*}(\omega) \le 2 \ln{\varkappa^{*}(\omega)}$ for any $\omega
\in \Omega$.
\end{lemma}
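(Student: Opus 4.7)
The plan is to unpack the focusing condition (A3) directly: for a fixed $\omega \in \Omega$ and any nonzero $u \in X^{+}$, the existence of $\beta(\omega,u) > 0$ with
\[
\beta(\omega,u)\,\mathbf{e} \le U_\omega(1)u \le \varkappa(\omega)\beta(\omega,u)\,\mathbf{e}
\]
immediately gives $U_\omega(1)u \sim \mathbf{e}$. It also yields a lower bound $m(U_\omega(1)u/\mathbf{e}) \ge \beta(\omega,u)$ and an upper bound $M(U_\omega(1)u/\mathbf{e}) \le \varkappa(\omega)\beta(\omega,u)$, so that
\[
d(U_\omega(1)u,\mathbf{e}) \;=\; \ln\frac{M(U_\omega(1)u/\mathbf{e})}{m(U_\omega(1)u/\mathbf{e})} \;\le\; \ln\varkappa(\omega).
\]
The dual statement for $U^{*}_\omega(1)u^{*}$ is obtained by the same computation applied to (A3)$^{*}$.

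For the projective diameter bound, I take any $u, v \in X^{+}$ with $U_\omega(1)u \sim U_\omega(1)v$; in particular both are nonzero, so each is in the equivalence class $C_{\mathbf{e}}$ by the first part. Then the triangle inequality for the projective distance (Lemma \ref{lemma-projective-distance}(3), applicable since $U_\omega(1)u \sim \mathbf{e} \sim U_\omega(1)v$) gives
\[
d(U_\omega(1)u,\,U_\omega(1)v) \;\le\; d(U_\omega(1)u,\mathbf{e}) + d(\mathbf{e},U_\omega(1)v) \;\le\; 2\ln\varkappa(\omega),
\]
where I used symmetry (Lemma \ref{lemma-projective-distance}(1)) together with the first part. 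Taking supremum over admissible $u,v$ yields $\tau(\omega) \le 2\ln\varkappa(\omega)$, and $\tau^{*}(\omega) \le 2\ln\varkappa^{*}(\omega)$ follows verbatim from (A3)$^{*}$.

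There is no real obstacle here: the lemma is essentially a direct translation of the focusing hypothesis into the language of oscillation and projective distance. The only point requiring a little care is justifying the triangle inequality, which requires all three vectors to lie in one component $C_{\mathbf{e}}$; this is exactly what the first half of the lemma supplies. No compactness, measurability, or ergodic ingredient is needed at this step.
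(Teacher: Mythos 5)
Your proof is correct and follows essentially the same route as the paper's: extract the $m$ and $M$ bounds from (A3), deduce $d(U_\omega(1)u,\mathbf{e})\le\ln\varkappa(\omega)$, and then apply the triangle inequality (Lemma~\ref{lemma-projective-distance}(3)) to bound $\tau(\omega)$. The paper's proof is terser but identical in substance.
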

\begin{proof}
By (A3), for any $\omega \in \Omega$ and $u \in X^{+} \setminus
\{0\}$ we have
\begin{equation*}
m(U_\omega(1)u/\mathbf{e}) \ge \beta(\omega,u), \qquad
M(U_\omega(1)u/\mathbf{e}) \le \varkappa(\omega) \beta(\omega,u).
\end{equation*}
It suffices now to apply the definition of $d(\cdot,\cdot)$ and
Lemma~\ref{lemma-projective-distance}(3).
\end{proof}
\begin{lemma}
\label{lm-tau-finite}
Assume moreover \textup{(A3)} and \textup{(A3)$^*$}.  For each
$\omega \in \Omega$,
\begin{equation*}
\tau(\omega) < \infty, \ \tau^{*}(\omega) < \infty
\end{equation*}
and
\begin{equation*}
p(\omega) = q(\omega) = \tanh{\frac{1}{4}\tau(\omega)} (<1), \
p^{*}(\omega) = q^{*}(\omega) = \tanh{\frac{1}{4}\tau^{*}(\omega)}(<1).
\end{equation*}
\end{lemma}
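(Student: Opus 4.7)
The plan is first to dispatch the finiteness assertion by invoking Lemma \ref{lm-estimate-projective-diameter}: since the focusing functions $\varkappa,\varkappa^{*}\colon\Omega\to[1,\infty)$ take values in $[1,\infty)$ (not in $[1,\infty]$), the pointwise bounds $\tau(\omega)\le 2\ln{\varkappa(\omega)}$ and $\tau^{*}(\omega)\le 2\ln{\varkappa^{*}(\omega)}$ proved in the previous lemma yield $\tau(\omega)<\infty$ and $\tau^{*}(\omega)<\infty$ for every $\omega\in\Omega$.

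The identity $p(\omega)=q(\omega)=\tanh(\tau(\omega)/4)$ is the classical Birkhoff--Hopf theorem on contraction ratios of positive linear maps in the Hilbert projective metric. Applied to the bounded linear operator $T:=U_{\omega}(1)$, whose projective diameter on $X^{+}\setminus\{0\}$ has just been shown to be finite, the standard proof proceeds in three stages: (i) by Lemma \ref{lemma-projective-distance}(2), both $p$ and $q$ are invariant under rescaling each of $u,v$ by positive scalars, so the supremum defining $q(T)$ can be restricted to pairs $(u,v)$ with $Tu\sim Tv$ lying in a fixed two-dimensional slice; (ii) on this slice the four numbers $m(u/v)$, $M(u/v)$, $m(Tu/Tv)$, $M(Tu/Tv)$ are related by explicit M\"obius transformations, and the extremal ratio of cross-ratios is computed directly and equals $\tanh(\tau(T)/4)$; (iii) a parallel but slightly simpler computation of the oscillation quotient identifies $p(T)$ with the same value. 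The strict inequality $\tanh(\tau(\omega)/4)<1$ is then immediate from $\tau(\omega)<\infty$.

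I expect the only real subtlety to be the two-dimensional reduction in step (i): one must verify that taking the supremum in the definitions of $p$ and $q$ over pairs $(u,v)$ with $u\sim v$ in an arbitrary ordered Banach space reduces, via the invariance in Lemma \ref{lemma-projective-distance}, to the two-dimensional computation on the set of admissible $(\underline{\alpha},\overline{\alpha})$. This is the content of Birkhoff's original argument and is laid out in detail in \cite[Theorem~3.5]{Eve} (whose setting covers arbitrary ordered Banach spaces, not only lattices), so the cleanest route is to invoke that reference and its $\Phi^{*}$-analog to obtain the dual statement for $p^{*},q^{*},\tau^{*}$ verbatim.
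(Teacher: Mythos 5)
Your proposal is correct and follows essentially the same route as the paper: both derive $\tau(\omega)<\infty$ from Lemma~\ref{lm-estimate-projective-diameter} and then invoke the Birkhoff--Hopf contraction theorem from Eveson's paper to obtain $p=q=\tanh(\tau/4)$. The only cosmetic differences are that the paper cites \cite[Theorem~2.1.1]{Eve} (in its dichotomy form: either $\tau=\infty$ and $p=q=1$, or $p=q=\tanh(\tau/4)$) rather than your Theorem~3.5, and the paper simply quotes the result rather than sketching the two-dimensional cross-ratio reduction behind it.
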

\begin{proof}
By~\cite[Theorem 2.1.1]{Eve}, for any $\omega \in \Omega$, either
\begin{equation}
\label{contraction-eq1}
\tau(\omega) = \infty, \quad p(\omega) = 1, \quad q(\omega) = 1
\end{equation}
or
\begin{equation}
\label{contraction-eq2}
p(\omega) = q(\omega) = \tanh{\frac{1}{4}\tau(\omega)}.
\end{equation}
By Lemma~\ref{lm-estimate-projective-diameter}, $\tau(\omega) <
\infty$. The lemma then follows.
\end{proof}

\begin{lemma}
\label{lm-submultiplicative-1}
Assume moreover \textup{(A3)} and \textup{(A3)$^*$}.  For any $\omega
\in \Omega$, if $u, v \in X^{+} \setminus \{0\}$ are such that $u
\sim v$ but $v \ne {\alpha}u$ for any positive real $\alpha$ then
\begin{equation*}
m(U_{\omega}(1)u/U_{\omega}(1)v) > m(u/v) \quad \text{and} \quad
M(U_{\omega}(1)u/U_{\omega}(1)v) < M(u/v).
\end{equation*}
Similarly, for any $\omega \in \Omega$, if $u^{*}, v^{*} \in
(X^{*})^{+} \setminus \{0\}$ are such that $u^{*} \sim v^{*}$ but
$v^{*} \ne {\alpha}u^{*}$ for any positive real $\alpha$ then
\begin{equation*}
m(U^{*}_{\omega}(1)u^{*}/U^{*}_{\omega}(1)v^{*}) > m(u^{*}/v^{*})
\quad \text{and} \quad
M(U^{*}_{\omega}(1)u^{*}/U^{*}_{\omega}(1)v^{*}) < M(u^{*}/v^{*}).
\end{equation*}
\end{lemma}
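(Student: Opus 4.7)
The plan is to exploit the focusing property (A3) to upgrade the weak inequalities already established in Lemma~\ref{lm-projective-diameter-nonincreasing} to strict ones. The key observation is that the ``slack'' in the inequalities $m(u/v)v \le u \le M(u/v)v$ gives two nonzero positive vectors, and the focusing hypothesis forces their images to have a definite positive size, measured in units of $U_\omega(1)v$.

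More concretely, I would first record that since $u \sim v$ and $u$ is not a positive scalar multiple of $v$, we have $0 < m(u/v) < M(u/v) < \infty$, and the vectors
\begin{equation*}
w_1 := u - m(u/v)\,v, \qquad w_2 := M(u/v)\,v - u
\end{equation*}
both lie in $X^{+} \setminus \{0\}$ (the suprema/infima in Definition~\ref{hilbert-metric-def} are attained because $X^{+}$ is closed, and nontriviality follows because $u$ is not proportional to $v$). Next I would invoke (A3) to produce positive reals $\beta(\omega, w_1)$, $\beta(\omega, w_2)$, and $\beta(\omega, v)$ satisfying
\begin{equation*}
\beta(\omega,w_i)\,\mathbf{e} \le U_{\omega}(1)w_i, \qquad U_{\omega}(1)v \le \varkappa(\omega)\,\beta(\omega,v)\,\mathbf{e}
\end{equation*}
for $i = 1, 2$. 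Combining these yields $U_{\omega}(1)w_i \ge \delta_i\, U_{\omega}(1)v$ with $\delta_i := \beta(\omega,w_i)/(\varkappa(\omega)\beta(\omega,v)) > 0$.

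Applying linearity and positivity of $U_\omega(1)$ to the definitions of $w_1, w_2$ then gives
\begin{equation*}
U_{\omega}(1)u \ge \bigl(m(u/v) + \delta_1\bigr)\,U_{\omega}(1)v, \qquad U_{\omega}(1)u \le \bigl(M(u/v) - \delta_2\bigr)\,U_{\omega}(1)v,
\end{equation*}
which by definition of $m$ and $M$ yields
\begin{equation*}
m(U_{\omega}(1)u/U_{\omega}(1)v) \ge m(u/v) + \delta_1 > m(u/v),
\end{equation*}
\begin{equation*}
M(U_{\omega}(1)u/U_{\omega}(1)v) \le M(u/v) - \delta_2 < M(u/v),
\end{equation*}
as desired. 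The dual statement is proved in exactly the same way, with (A3) replaced by (A3)$^{*}$ and $U_\omega(1)$ by $U^{*}_\omega(1)$.

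The proof is essentially routine once one has the right packaging. The only subtle point is verifying that $w_1$ and $w_2$ really are nonzero elements of $X^{+}$: this needs that the sup/inf in the definitions of $m(u/v)$ and $M(u/v)$ are attained (using closedness of $X^+$), and that equality $u = m(u/v)v$ or $u = M(u/v)v$ is ruled out by the hypothesis $v \ne \alpha u$. Beyond that, the argument is a direct manipulation of the inequalities provided by (A3), so I do not foresee any genuine obstacle.
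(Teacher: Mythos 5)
Your proof is correct and follows essentially the same route as the paper: you form the two positive ``slack'' vectors $u - m(u/v)v$ and $M(u/v)v - u$, apply (A3) to bound their images from below and the image of $v$ from above in units of $\mathbf{e}$, and combine to shift $m$ up and $M$ down by a definite amount. The paper spells out only the first inequality and declares the second ``similar,'' so your version is merely slightly more explicit.
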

\begin{proof}
For $u, v$ as in the assumption we have
\begin{equation*}
m(u/v) v < u < M(u/v)v.
\end{equation*}
By (A3),
\begin{equation*}
U_{\omega}(1)(u - m(u/v) v) \ge \beta(\omega, u - m(u/v) v) \mathbf{e}
\end{equation*}
and
\begin{equation*}
\mathbf{e} \ge \frac{1}{\varkappa(\omega) \beta(\omega, v)}
U_{\omega}(1)v,
\end{equation*}
which gives that
\begin{equation*}
U_{\omega}(1) u \ge \left( m(u/v) + \frac{\beta(\omega, u - m(u/v)
v)}{\varkappa(\omega) \beta(\omega, v)}\right) U_{\omega}(1)v.
\end{equation*}
The first inequality follows immediately.  The proof of the second
inequality is similar.
\end{proof}

\begin{lemma}
\label{lemma-measurability}
Let the cones $X^{+}$, $(X^{*})^{+}$ be normal and $X$, $X^*$ be
separable.  Assume moreover \textup{(A3)} and \textup{(A3)$^*$}. Then
the functions
\begin{gather*}
[\, \Omega \ni \omega \mapsto \tau(\omega) \in \RR\,],
\ [\, \Omega \ni \omega \mapsto \tau^{*}(\omega) \in \RR\,] \\
[\, \Omega \ni \omega \mapsto p(\omega) \in \RR\,], \
[\, \Omega \ni \omega \mapsto p^{*}(\omega) \in \RR\,] \\
[\, \Omega \ni \omega \mapsto q(\omega) \in \RR\,], \ [\, \Omega \ni
\omega \mapsto q^{*}(\omega) \in \RR\,]
\end{gather*}
are
$(\mathfrak{F},\mathfrak{B}(\RR))$\nobreakdash-\hspace{0pt}measurable.
\end{lemma}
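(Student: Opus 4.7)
The plan is to reduce the problem to the measurability of $\tau$ and $\tau^{*}$. By Lemma \ref{lm-tau-finite}, under \textup{(A3)} and \textup{(A3)}$^{*}$ one has the pointwise identities $p(\omega) = q(\omega) = \tanh(\tau(\omega)/4)$ and $p^{*}(\omega) = q^{*}(\omega) = \tanh(\tau^{*}(\omega)/4)$, so composing with the continuous function $t \mapsto \tanh(t/4)$ shows that measurability of $\tau$ and $\tau^{*}$ implies that of $p, q, p^{*}, q^{*}$. The argument for $\tau^{*}$ is formally identical to the one for $\tau$ (working with $\Phi^{*}$ and the data $\mathbf{e}^{*}, \varkappa^{*}$ from \textup{(A3)}$^{*}$), so I concentrate on $\tau$.

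For each fixed pair $u, v \in X^{+} \setminus \{0\}$ I first argue that $\omega \mapsto d(U_{\omega}(1)u, U_{\omega}(1)v)$ is $(\mathfrak{F},\mathfrak{B}(\RR))$\nobreakdash-measurable. Since $X^{+}$ is closed the infimum defining $M$ is attained, and one has the equivalences $M(a/b) \le \gamma \iff \gamma b - a \in X^{+}$ and $m(a/b) \ge \gamma \iff a - \gamma b \in X^{+}$. The maps $\omega \mapsto U_{\omega}(1)u$ and $\omega \mapsto U_{\omega}(1)v$ are $(\mathfrak{F},\mathfrak{B}(X))$\nobreakdash-measurable by the joint measurability built into the definition of a \mlsps, and $X^{+}$ is Borel, so the corresponding sublevel and superlevel sets lie in $\mathfrak{F}$ for every $\gamma \in \RR$. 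Moreover \textup{(A3)} yields the explicit estimates
\begin{equation*}
\frac{\beta(\omega,u)}{\varkappa(\omega)\beta(\omega,v)} \le m(U_{\omega}(1)u/U_{\omega}(1)v), \qquad M(U_{\omega}(1)u/U_{\omega}(1)v) \le \frac{\varkappa(\omega)\beta(\omega,u)}{\beta(\omega,v)},
\end{equation*}
so both $m$ and $M$ are finite and strictly positive, and hence $d(U_{\omega}(1)u, U_{\omega}(1)v) = \ln M - \ln m$ is measurable.

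Next I replace the uncountable supremum defining $\tau(\omega)$ by a countable one. By \textup{(A3)}, $U_{\omega}(1)u \sim \mathbf{e}$ for every $u \in X^{+} \setminus \{0\}$, so the comparability condition in the supremum defining $\tau$ holds automatically for all $u, v \in X^{+} \setminus \{0\}$. Since $X$ is separable, $X^{+} \setminus \{0\}$ admits a countable dense subset $D$; set $\tau_{D}(\omega) := \sup_{u, v \in D} d(U_{\omega}(1)u, U_{\omega}(1)v)$. By the previous step $\tau_{D}$ is measurable, and trivially $\tau_{D}(\omega) \le \tau(\omega)$. For the reverse inequality, fix $u, v \in X^{+} \setminus \{0\}$ and sequences $u_{n}, v_{n} \in D$ with $u_{n} \to u$, $v_{n} \to v$; continuity of $U_{\omega}(1)$ gives $U_{\omega}(1)u_{n} \to U_{\omega}(1)u \ne 0$ and $U_{\omega}(1)v_{n} \to U_{\omega}(1)v \ne 0$. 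The focusing bounds in \textup{(A3)} sandwich $\beta(\omega,u_{n})$ between $\norm{U_{\omega}(1)u_{n}}/\varkappa(\omega)$ and $\norm{U_{\omega}(1)u_{n}}$ (and similarly for $v_{n}$), and these norms converge to nonzero limits. Hence the sequences $(M(U_{\omega}(1)u_{n}/U_{\omega}(1)v_{n}))$ and $(1/m(U_{\omega}(1)u_{n}/U_{\omega}(1)v_{n}))$ stay bounded, so Lemma \ref{lemma-continuity} applies and yields $d(U_{\omega}(1)u, U_{\omega}(1)v) \le \liminf_{n\to\infty} d(U_{\omega}(1)u_{n}, U_{\omega}(1)v_{n}) \le \tau_{D}(\omega)$. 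Taking the supremum over $u, v$ gives $\tau(\omega) = \tau_{D}(\omega)$, hence $\tau$ is measurable.

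The main obstacle is the verification of the boundedness hypotheses of Lemma \ref{lemma-continuity} along the approximating sequence; this is precisely where the uniform focusing provided by \textup{(A3)} is indispensable, together with the nonvanishing $U_{\omega}(1)u \ne 0$ on $X^{+} \setminus \{0\}$ furnished by the inequality $U_{\omega}(1)u \ge \beta(\omega,u)\mathbf{e}$. Once this is in place the proof is assembled from the closedness of $X^{+}$ (for the fixed-pair measurability), the separability of $X$ (for the countable reduction), and Lemma \ref{lm-tau-finite} (to transfer the result to $p$ and $q$); the dual statements follow by the symmetric argument using $(X^{*})^{+}$, $\mathbf{e}^{*}$, $\varkappa^{*}$, and \textup{(A3)}$^{*}$.
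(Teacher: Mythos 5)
Your proof is correct and follows essentially the same plan as the paper's: reduce to $\tau$ via Lemma~\ref{lm-tau-finite}, establish fixed-pair measurability of $\omega\mapsto d(U_\omega(1)u,U_\omega(1)v)$ from the measurability of $\omega\mapsto U_\omega(1)u$ and closedness of $X^{+}$, then replace the supremum by one over a countable dense set and use the focusing bound of \textup{(A3)} together with Lemma~\ref{lemma-continuity} to close the gap. The only (cosmetic) variation is in the boundedness check needed for Lemma~\ref{lemma-continuity}: the paper runs two contradiction arguments with the inequality $M\le\varkappa^2 m$, while you derive explicit two-sided bounds on $\beta(\omega,u_n)$ via the monotonic norm, which is slightly more direct but equivalent.
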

\begin{proof}
By Lemma \ref{lm-tau-finite}, it suffices to prove that $[\, \Omega
\ni \omega \mapsto \tau(\omega) \in \RR\,]$ is
$(\mathfrak{F},\mathfrak{B}(\RR))$-\hspace{0pt}measurable.

First, fix $u$ and $v$ in $X^{+} \setminus\{0\}$.  We prove that $[\,
\omega \mapsto M(U_{\omega}(1)u/U_{\omega}(1)v)\,]$ is
$(\mathfrak{F},\mathfrak{B}(\RR))$\nobreakdash-\hspace{0pt}measurable.
To this end, take a countable set $\{\rho_k\}$ which is dense in
$\RR^+$. Let
\begin{equation*}
\Omega_k := \{\,\omega \in \Omega\,:\, U_{\omega}(1)u \le {\rho}_k
U_{\omega}(1)v\,\}.
\end{equation*}
Then $\Omega_k$ is the inverse image of $X^{+}$ under the function
$[\,\omega \mapsto {\rho}_k U_{\omega}(1)v - U_{\omega}(1)u \in
X\,]$. By the measurability of $U_\omega(1)$ in $\omega$, $\Omega_k$
is a measurable subset of $\Omega$. Let
\begin{equation*}
M_k(\omega) := \begin{cases} \rho_k \quad \text{for} \quad \omega \in
\Omega_k \\
\infty \quad \text{for} \quad \omega \in \Omega \setminus \Omega_k.
\end{cases}
\end{equation*}
Then $[\, \Omega \ni \omega \mapsto M_k(\omega) \in \RR \,]$ is
$(\mathfrak{F},\mathfrak{B}(\RR))$\nobreakdash-\hspace{0pt}measurable
and
\begin{equation*}
M(U_{\omega}(1)u/U_{\omega}(1)v) = \inf_{k\ge 1}M_k(\omega).
\end{equation*}
It then follows that $[\, \omega \mapsto
M(U_{\omega}(1)u/U_{\omega}(1)v)\,]$ is
$(\mathfrak{F},\mathfrak{B}(\RR))$\nobreakdash-\hspace{0pt}measurable.

Similarly, it can be proved that $[\, \omega \mapsto
m(U_{\omega}(1)u/U_{\omega}(1)v) \,]$ is
$(\mathfrak{F},\mathfrak{B}(\RR))$\nobreakdash-\hspace{0pt}measurable,
for fixed $u$ and $v$ in $X^{+} \setminus\{0\}$.

Hence $[\, \omega \mapsto d(U_\omega(1)u,U_\omega(1)v) \,]$ is
$(\mathfrak{F},\mathfrak{B}(\RR))$\nobreakdash-\hspace{0pt}measurable,
for fixed $u$ and $v$ in $X^{+} \setminus \{0\}$.

Now let $\{u_k\}$ and $\{v_l\}$ be two dense countable sets in $X^{+}
\setminus \{0\}$.  We claim that
\begin{equation}
\label{aux-eq1}
\tau(\omega) = \sup_{k,l\ge 1} d(U_\omega(1)u_k,U_\omega(1)v_l)
\end{equation}
and hence $\tau(\omega)$ is measurable in $\omega$.  In~fact, for any
$u, v \in X^+ \setminus \{0\}$ there are $(u_{k_m})_{m=1}^{\infty}$
and $(v_{l_m})_{k=1}^{\infty}$ such that $u_{k_m} \to u$ and $v_{l_m}
\to v$ as $m \to \infty$.  Observe that it follows from (A3) that
\begin{equation}
\label{m-and-M}
M(U_{\omega}(1)u_{k_m}/U_{\omega}(1)v_{l_m}) \le \varkappa^2(\omega)
\cdot m(U_{\omega}(1)u_{k_m}/U_{\omega}(1)v_{l_m})
\end{equation}
for $m = 1, 2, \dots$.

We prove that $\{\, m(U_{\omega}(1)u_{k_m}/U_{\omega}(1)v_{k_m}): m =
1, 2, \dots \,\}$ is bounded away from zero.  Indeed, if not then
there is a subsequence $m_j \to \infty$ as $j \to \infty$ such that
$m(U_{\omega}(1)u_{k_{m_j}}/U_{\omega}(1)v_{l_{m_j}}) \to 0$ as $j
\to \infty$, from which it follows via~\eqref{m-and-M} that
$U_{\omega}(1)v_{l_{m_j}} \to 0$, which contradicts the fact that
$U_{\omega}(1)v_{l_{m_j}} \to U_{\omega}(1)v \ne 0$.  Now we prove
that $\{\, M(U_{\omega}(1)u_{k_m}/U_{\omega}(1)v_{l_m}): m = 1, 2,
\dots \,\}$ is bounded.  Indeed, if not then there is a subsequence
$m_j \to \infty$ as $j \to \infty$ such that
$M(U_{\omega}(1)u_{k_{m_j}}/U_{\omega}(1)v_{l_{m_j}}) \to \infty$ as
$j \to \infty$, from which it follows via~\eqref{m-and-M} that
$m(U_{\omega}(1)u_{k_{m_j}}/U_{\omega}(1)v_{l_{m_j}}) \to \infty$ as
$j \to \infty$.  It follows via the normality of the cone $X^{+}$
that $\norm{U_{\omega}(1)u_{k_{m_j}}} \to \infty$, which contradicts
the fact that $U_{\omega}(1)u_{k_{m_j}} \to U_{\omega}(1)u$.

Hence $\{M(U_{\omega}(1)u_{k_m}/U_{\omega}(1)v_{l_m})\}$ and
$\{\frac{1}{m(U_{\omega}(1)u_{k_m}/U_{\omega}(1)v_{l_m})}\}$ are
bounded.  By Lemma \ref{lemma-continuity},
\begin{equation*}
d(U_\omega(1)u,U_\omega(1)v) \le \liminf_{m\to\infty}
d(U_\omega(1)u_{k_m},U_\omega(1)v_{l_m}).
\end{equation*}
This implies that \eqref{aux-eq1} holds and then $[\, \omega \mapsto
\tau(\omega) \,]$ is
$(\mathfrak{F},\mathfrak{B}(\RR))$\nobreakdash-\hspace{0pt}measurable.
\end{proof}

\section{Proofs of Main Results}
\label{section-proofs}

Throughout the entire Section~\ref{section-proofs} we assume (A0)(i)
and that $\Phi = ((U_\omega(t))_{\omega\in\Omega, t \in \TT^{+}},
\allowbreak (\theta_t)_{t\in\TT} )$ is a \mlsps\ on $X$ covering
$(\theta_{t})_{t \in \TT}$, satisfying (A1)(i) and (A2).

\smallskip
For a closed $E \subset X^{+}$, $E \ne \{0\}$, such that $u \in E$
and $\alpha \ge 0$ implies ${\alpha}u \in E$, the symbol
$\mathcal{S}_1(E)$ will denote the intersection of $E$ with the unit
sphere in $X$, $\mathcal{S}_1(E) := \{\, u \in E: \norm{u} = 1\,\}$.

\smallskip
Under the assumption (A1)(ii) or (A3) we define, for $t \in \TT^{+}$
and $\omega \in \Omega$, a function $\mathcal{U}_{\omega}(t) \colon
\mathcal{S}_1(X^{+})  \to \mathcal{S}_1(X^{+})$ by the formula:
\begin{equation*}
\mathcal{U}_{\omega}(t)u := \frac{U_{\omega}(t)u}
{\norm{U_{\omega}(t)u}}, \quad u \in \mathcal{S}_1(X^{+}) .
\end{equation*}
The function $\mathcal{U}_{\omega}(t)$ is well defined.  This is
obvious under (A1)(ii).  Assume then (A3).  If $U_{\omega}(t)u = 0$
for some $t \in \TT^{+}$ and $u \in \mathcal{S}_1(X^{+})$ then there
exists $n_0 \in \NN \cup \{0\}$ such that $U_{\omega}(n_0)u \in X^{+}
\setminus \{0\}$ but $U_{\omega}(n_0+1)u = 0$.  As
$U_{\omega}(n_0+1)u = U_{\theta_{n_0}\omega}(1)(U_{\omega}(n_0)u)$,
this contradicts (A3).

The function $\mathcal{U}_{\omega}(t)$ is clearly continuous.
Furthermore, as a consequence of~\eqref{eq-cocycle} we have
\begin{equation}
\label{eq-cocycle-projective}
\mathcal{U}_{\omega}(s+t) = \mathcal{U}_{\theta_{s}\omega}(t) \circ
\mathcal{U}_{\omega}(s), \quad s, t \in \TT^{+},\ \omega \in \Omega.
\end{equation}

\subsection{Entire positive orbits and proof of Theorem
\ref{thm-largest-meets-nontrivially}}
\label{subsection-entire}

Throughout this subsection we assume moreover (A1)(ii)--(iii) and
that $X^{+}$ is total.  Further, we assume that Theorem
\ref{Oseledets-thm}(2) or (3) occurs.  We investigate the existence
of entire positive orbits of $\Phi$ and prove Theorem
\ref{thm-largest-meets-nontrivially}.

Let $\Omega_0$ be as in Theorem \ref{Oseledets-thm}.  Recall that
$\{E_1(\omega)\}_{\omega \in \Omega_0}$ is a measurable family of
finite\nobreakdash-\hspace{0pt}dimensional vector subspaces of $X$
such that for each $\omega \in \Omega_0$ and each $t \in \TT^{+}$ the
mapping $U_\omega(t)|_{E_1(\omega)} \colon E_1(\omega) \to
E_1(\theta_{t}\omega)$ is a linear isomorphism.

\smallskip
Let $\hat{F}_1(\omega)$ be as in \eqref{f-hat-space-eq}. Let
$\{P(\omega)\}_{\omega \in \Omega_0}$ be the family of projections
associated with the decomposition $E_1(\omega) \oplus
\hat{F}_1(\omega) = X$. For $\omega \in \Omega_0$ put
\begin{equation*}
\delta(\omega) := \inf{\left\{\, \frac{\norm{P(\omega)u}}{\norm{u -
P(\omega)u}}: u \in X^{+} \setminus \{0\} \,\right\}}.
\end{equation*}
Observe that for any nonzero $u \in X^{+}$, $P(\omega)u$ and $u -
P(\omega)u$ cannot be both zero, so $\norm{P(\omega)u}/\norm{u -
P(\omega)u}$ is well defined (perhaps equal to $\infty$).

We claim that if $X^+$ is total, then $\delta(\omega)$ is a
nonnegative real number for each $\omega \in \Omega_0$.  Indeed,
$\delta(\omega) = \infty$ for some $\omega \in \Omega_0$ means that
$X^{+} \subset \hat{F}_1(\omega)$. As the cone $X^{+}$ is total, we
have that $X = \cl{(X^{+} - X^{+})} \subset \hat{F}_1(\omega)$, which
is impossible.

\begin{lemma}
\label{auxiliary-lemma-1}
For any $\omega \in \Omega_0$, $\delta(\omega) = 0$ if~and only~if
$E_1(\omega) \cap X^{+} \varsupsetneq \{0\}$.
\end{lemma}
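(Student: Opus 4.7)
The plan is to prove the two implications separately, with the reverse direction being essentially immediate and the forward direction using the finite-dimensionality of $E_1(\omega)$.

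For the direction $E_1(\omega)\cap X^{+}\varsupsetneq\{0\}\Longrightarrow \delta(\omega)=0$, I would simply pick a nonzero $u\in E_1(\omega)\cap X^{+}$, observe that $P(\omega)u=0$ while $u-P(\omega)u=u\ne 0$, and note that then the ratio $\norm{P(\omega)u}/\norm{u-P(\omega)u}$ entering the definition of $\delta(\omega)$ equals $0$; hence the infimum is $0$.

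For the forward direction $\delta(\omega)=0\Longrightarrow E_1(\omega)\cap X^{+}\varsupsetneq\{0\}$, I would take a sequence $u_n\in X^{+}\setminus\{0\}$ with $\norm{P(\omega)u_n}/\norm{u_n-P(\omega)u_n}\to 0$. Note that $u_n-P(\omega)u_n\ne 0$ for all sufficiently large $n$: if it were zero then $u_n\in \hat F_1(\omega)\cap X^{+}$, but then $\norm{P(\omega)u_n}/\norm{u_n-P(\omega)u_n}$ would be undefined and more importantly $u_n=P(\omega)u_n\ne 0$ would force the ratio to be $\infty$ for that index (and we may simply discard such terms). Then rescale by setting $v_n:=u_n/\norm{u_n-P(\omega)u_n}\in X^{+}\setminus\{0\}$, so that the $E_1(\omega)$-component $v_n-P(\omega)v_n$ has norm exactly $1$ while the $\hat F_1(\omega)$-component $P(\omega)v_n$ tends to $0$ in norm.

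Since $v_n-P(\omega)v_n$ lies in the finite-dimensional subspace $E_1(\omega)$ and has unit norm, I can extract a subsequence converging to some $w\in E_1(\omega)$ with $\norm{w}=1$. Combining with $P(\omega)v_n\to 0$, the corresponding subsequence of $v_n$ itself converges to $w$. Because $v_n\in X^{+}$ and $X^{+}$ is closed, the limit satisfies $w\in E_1(\omega)\cap X^{+}\setminus\{0\}$, giving the desired conclusion. The only subtle point is verifying that the ratio is well-defined along the chosen sequence, which is handled by the preliminary remark that $P(\omega)u$ and $u-P(\omega)u$ cannot simultaneously vanish for nonzero $u$; once that is noted, the argument is routine and uses only finite-dimensionality of $E_1(\omega)$ plus closedness of $X^{+}$.
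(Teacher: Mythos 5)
Your proof is correct and follows essentially the same approach as the paper's: pick a minimizing sequence, use finite-dimensionality of $E_1(\omega)$ to extract a convergent subsequence of the $E_1$-components, observe the $\hat F_1$-components vanish, and invoke closedness of $X^{+}$ to conclude the limit lies in $E_1(\omega)\cap X^{+}\setminus\{0\}$. The only cosmetic difference is the normalization (you scale so that $\norm{u_n - P(\omega)u_n} = 1$, while the paper scales so that $\norm{u_k}=1$ and then deduces $P(\omega)u_k \to 0$ from the defining inequality), which does not change the substance of the argument.
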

\begin{proof}
The ``if'' part is straightforward.  Assume that $\delta(\omega) = 0$
for some $\omega \in \Omega_0$.  It follows that for each $k = 1, 2,
\dots$ we can choose $u_k \in X^{+}$, $\norm{u_k} = 1$, such that
$\norm{P(\omega) u_k} < \frac{1}{k} \norm{u_k - P(\omega) u_k}$.
Since the set $\{\, (\Id_{X}-P(\omega)) u_k: k \in \NN \,\}$, being a
bounded subset of a finite\nobreakdash-\hspace{0pt}dimensional vector
subspace $E_1(\omega)$, has compact closure, we can extract a
subsequence $((\Id_{X}-P(\omega)) u_{k_l})_{l=1}^{\infty}$ convergent
to some $v \in E_1(\omega)$. Observe that $P(\omega) u_{k_l} \to 0$,
consequently $u_{k_l} \to v$ as $l \to \infty$.  We have $v \in
X^{+}$ and $\norm{v} = 1$.
\end{proof}

\begin{lemma}
\label{auxiliary-lemma-2}
The function $\delta \colon \Omega_0 \to [0,\infty)$ is
$(\mathfrak{F},
\mathfrak{B}(\RR))$\nobreakdash-\hspace{0pt}measurable.
\end{lemma}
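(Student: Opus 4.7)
The plan is to express $\delta$ as a countable infimum of measurable functions. Since $(X,X^{+})$ is separable by (A0)(i), I fix a countable dense subset $\{u_n\}_{n \in \NN}$ of $X^{+} \setminus \{0\}$. By Theorem~\ref{Oseledets-thm}, the family of projections $\{P(\omega)\}_{\omega \in \Omega_0}$ associated with the decomposition $E_1(\omega) \oplus \hat{F}_1(\omega) = X$ is strongly measurable; hence, for each fixed $u \in X$, the mapping $[\,\omega \mapsto P(\omega)u\,]$ is $(\mathfrak{F},\mathfrak{B}(X))$\nobreakdash-\hspace{0pt}measurable, and consequently both $[\,\omega \mapsto \norm{P(\omega)u_n}\,]$ and $[\,\omega \mapsto \norm{u_n - P(\omega)u_n}\,]$ are $(\mathfrak{F},\mathfrak{B}(\RR))$\nobreakdash-\hspace{0pt}measurable.

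Setting
\[
f_n(\omega) := \frac{\norm{P(\omega) u_n}}{\norm{u_n - P(\omega) u_n}} \in [0,\infty]
\]
(with the convention $a/0 := +\infty$ for $a \ge 0$), each $f_n$ is Borel measurable as a composition of measurable functions with the Borel extended\nobreakdash-\hspace{0pt}real division map. I will then establish that
\[
\delta(\omega) = \inf_{n \in \NN} f_n(\omega) \qquad \text{for every } \omega \in \Omega_0,
\]
from which the $(\mathfrak{F},\mathfrak{B}(\RR))$\nobreakdash-\hspace{0pt}measurability of $\delta$ follows at once.

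The inequality $\delta(\omega) \le \inf_n f_n(\omega)$ is immediate since each $u_n$ lies in $X^{+} \setminus \{0\}$. For the reverse direction, fix $\omega \in \Omega_0$ and $u \in X^{+} \setminus \{0\}$. If $u \in \hat F_1(\omega)$, then the ratio $\norm{P(\omega)u}/\norm{u-P(\omega)u}$ equals $+\infty$ and this point is irrelevant for computing the infimum defining $\delta(\omega)$. If $u \notin \hat F_1(\omega)$, pick $u_{n_k} \to u$ with $u_{n_k} \in X^{+} \setminus \{0\}$; since $\hat F_1(\omega)$ is closed and $u$ lies in its (open) complement, we have $u_{n_k} - P(\omega) u_{n_k} \ne 0$ for all sufficiently large $k$, and continuity of $P(\omega)$ and of $\norm{\cdot}$ yields $f_{n_k}(\omega) \to \norm{P(\omega)u}/\norm{u-P(\omega)u}$. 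Hence $\inf_n f_n(\omega) \le \norm{P(\omega)u}/\norm{u-P(\omega)u}$, and taking the infimum over admissible $u$ gives $\inf_n f_n(\omega) \le \delta(\omega)$.

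The only delicate point is the possible vanishing of the denominator $\norm{u_n - P(\omega)u_n}$ for some pair $(n,\omega)$; but assigning the value $+\infty$ in that case is harmless when taking infima and is compatible with the Borel structure on $[0,\infty]$, so measurability is preserved throughout.
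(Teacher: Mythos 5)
Your proof is correct and takes essentially the same approach as the paper's: both arguments pick a countable dense subset of $X^+$ (or its unit sphere), use strong measurability of $\{P(\omega)\}$ and scale-invariance of the ratio $\norm{P(\omega)u}/\norm{u - P(\omega)u}$ to reduce the infimum defining $\delta$ to a countable one, then conclude measurability. Your version makes the density/continuity argument slightly more explicit (and handles the $u \in \hat F_1(\omega)$ degeneracy), but the underlying idea is identical to the paper's reduction to the dense family $\{z_k\}$.
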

\begin{proof}
Take $\{z_k\}_{k=1}^{\infty}$ to be a countable dense subset of
$\mathcal{S}_1(X^{+})$.  Notice that for any $\omega \in \Omega_0$
and any positive real $r$, ``$\delta(\omega) \ge r$'' is equivalent
to ``$\norm{P(\omega) z_k}/\allowbreak \norm{z_k - P(\omega) z_k} \ge
r$ for all $k = 1, 2 ,3, \dots$.''  As $\{P(\omega)\}_{\omega \in
\Omega_0}$ is strongly measurable, for any positive real $r$ and any
$k = 1, 2 , 3, \dots$ the set $\{\, \omega \in \Omega_0:
\norm{P(\omega) z_k}/\allowbreak \norm{z_k - P(\omega) z_k} \ge
r\,\}$ belongs to $\mathfrak{F}$. This implies that  $\delta \colon
\Omega_0 \to [0,\infty)$ is $(\mathfrak{F},
\mathfrak{B}(\RR))$\nobreakdash-\hspace{0pt}measurable.
\end{proof}

\begin{proof}[Proof of Theorem~\ref{thm-largest-meets-nontrivially}]
We first prove that $E_1(\omega) \cap X^{+} \varsupsetneq \{0\}$ for
a.e. $\omega\in\Omega$.

Fix two real numbers $\underline{\lambda} < \overline{\lambda}$ in
the following way.  If (2) in Theorem~\ref{Oseledets-thm} holds with
$k = 1$ we stipulate only that $\underline{\lambda} <
\overline{\lambda} < \lambda_1$.  Otherwise we take $\lambda_2 <
\underline{\lambda} < \overline{\lambda} < \lambda_1$

For each $\omega \in \Omega_0$ there are $\overline{c}(\omega) \in
(0,1]$ and $\underline{c}(\omega) \ge 1$ such that
\begin{equation}
\label{c-1}
\begin{aligned}
\norm{U_{\omega}(t)u} & \ge \overline{c}(\omega)
e^{\overline{\lambda} t} \norm{u} & \quad &
\text{for any }u \in E_1(\omega)\text{ and }t \in \TT^{+} \\
\norm{U_{\omega}(t)u} & \le \underline{c}(\omega)
e^{\underline{\lambda} t} \norm{u} & \quad & \text{for any }u \in
\hat{F}_1(\omega)\text{ and }t \in \TT^{+}.
\end{aligned}
\end{equation}
Consequently,
\begin{align*}
\frac{\norm{P(\theta_{t}\omega) U_{\omega}(t)u}}{\norm{(\Id_{X} -
P(\theta_{t}\omega)) U_{\omega}(t)u}} & = \frac{\norm{U_{\omega}(t)
P(\omega) u}}{\norm{U_{\omega}(t) (\Id_{X} - P(\omega)) u}} \\[0.5em]
& \le \frac{\underline{c}(\omega)}{\overline{c}(\omega)} \,
e^{(\underline{\lambda}-\overline{\lambda})t}
\frac{\norm{P(\omega)u}}{\norm{u - P(\omega)u}}
\end{align*}
for each $u \in X \setminus \hat{F}_1(\omega)$ and each $t \in
\TT^{+}$. Therefore we have
\begin{equation*}
\delta(\theta_{t}\omega) \le
\frac{\underline{c}(\omega)}{\overline{c}(\omega)} \,
e^{(\underline{\lambda}-\overline{\lambda})t} \, \delta(\omega)
\end{equation*}
for all $\omega \in \Omega_{0}$ and $t \in \TT^{+}$, which implies
that
\begin{equation*}
\lim\limits_{n\to\infty} \frac{1}{n} \sum\limits_{i=0}^{n-1}
\delta(\theta_{i}\omega) = 0
\end{equation*}
for all $\omega \in \Omega_{0}$.  We apply the Birkhoff Ergodic
Theorem (Theorem~\ref{Birkhoff-thm}(i)) to $((U_{\omega}(n))_{\omega
\in \Omega, n \in \ZZ^{+}}$, $ (\theta_n)_{n \in \ZZ})$ and the
function $-\delta$ to conclude that $\int_{\Omega} \delta \, d\PP =
0$, from which it follows that $\delta(\omega) = 0$ for $\omega$
belonging to $\Omega_1$ with $\PP(\Omega_1) = 1$ and
$\theta_{1}(\Omega_1) = \Omega_1$.  An application of
Lemma~\ref{auxiliary-lemma-1} gives that $E_1(\theta_{n}\omega) \cap
X^{+} \varsupsetneq \{0\}$ for all $\omega \in \Omega_1$ and all $n
\in \ZZ$.  This finishes the proof in the
discrete\nobreakdash-\hspace{0pt}time case.  In the
continuous\nobreakdash-\hspace{0pt}time case, let $t \in \RR
\setminus \ZZ$.  Pick a nonzero $u \in
E_1(\theta_{\intpart{t}}(\omega)) \cap X^{+}$.  We have a nonzero
$U_{\theta_{\intpart{t}}\omega}(t - \intpart{t})u \in
E_{1}(\theta_{t}\omega) \cap X^{+}$.

Next we prove that for each $\omega \in \Omega_1$ there exists an
entire positive  orbit $v_{\omega} \colon \TT \to X^+$ of
$U_{\omega}$ such that
\begin{equation*}
v_{\omega}(t) \in (E_1(\theta_{t}\omega) \cap X^{+}) \setminus
\{0\} \quad \forall t \in \TT.
\end{equation*}
Fix $\omega \in \Omega_1$.  For $n = 1,2,3, \dots$ the sets
$\mathcal{U}_{\theta_{-n}\omega}(n)(\mathcal{S}_1(E_1(\theta_{-n}(\omega))
\cap X^{+}))$ are compact and nonempty.  Further, it follows
from~\eqref{eq-cocycle-projective} that they form a nonincreasing
family, consequently their intersection, $G_0$, is a nonempty compact
set.  It suffices now to pick one $u \in G_0$ and put $v_{\omega}(t)
:= U_{\omega}(t)u$, $t \in \TT$, where $U_{\omega}(t)$ is, for $t <
0$, understood as
$(U_{\theta_{-t}\omega}(-t)|_{E_1(\theta_{-t}\omega)})^{-1}$.
\end{proof}

\subsection{Principal Floquet subspaces and proofs of Theorems
\ref{theorem-w} and \ref{theorem-w-star}}
\label{section-principal}

In this subsection, we investigate the existence of generalized
principal Floquet subspaces and principal Lyapunov exponents and
prove Theorems \ref{theorem-w} and \ref{theorem-w-star}. Throughout
this subsection, we assume additionally (A0)(ii) and (A3).

Before proving Theorems \ref{theorem-w} and \ref{theorem-w-star}, we
first prove some propositions.

\begin{proposition}
\label{prop-Cauchy-estimate}
\begin{itemize}
\item[{\rm (i)}]
Let $\omega \in \Omega$, $t \in \TT^{+}$, $2 \le t \le t_1 $ and
$u, \tilde{u} \in \mathcal{S}_1(X^{+})$. Then
\begin{equation}
\label{Cauchy-estimate-02}
\lVert \mathcal{U}_{\theta_{-t}\omega}(t_1) u -
\mathcal{U}_{\theta_{-t}\omega}(t_1) \tilde{u} \rVert \le 6
\varkappa^2(\theta_{-1}\omega) \,
\big(\ln{\varkappa(\theta_{-\intpart{t}}\omega)}\big) \,
q(\theta_{-\intpart{t}+1}\omega) \dots q(\theta_{-1}\omega).
\end{equation}
In~particular, for $n = 2, 3, \dots$ one has
\begin{equation}
\label{Cauchy-estimate-020}
\lVert \mathcal{U}_{\theta_{-n}\omega}(n) u -
\mathcal{U}_{\theta_{-n}\omega}(n) \tilde{u} \rVert \le 6
\varkappa^2(\theta_{-1}\omega) \,
\big(\ln{\varkappa(\theta_{-n}\omega)}\big) \,
q(\theta_{-n+1}\omega) \dots q(\theta_{-1}\omega).
\end{equation}
\item[{\rm (ii)}]
Let $\omega \in \Omega$, $t \in \TT^{+}$, $t \ge 2$ and $u,
\tilde{u} \in \mathcal{S}_1(X^{+})$.  Then
\begin{equation}
\label{Cauchy-estimate-03}
\lVert \mathcal{U}_{\omega}(t) u - \mathcal{U}_{\omega}(t)
\tilde{u} \rVert \le 6 \varkappa^2(\theta_{\intpart{t}-1}\omega)
\, \big(\ln{\varkappa(\omega)}\big) \, q(\theta_{1}\omega) \dots
q(\theta_{\intpart{t}-1}\omega).
\end{equation}
In~particular, for $n = 2, 3, \dots$ one has
\begin{equation}
\label{Cauchy-estimate-030}
\lVert \mathcal{U}_{\omega}(n) u - \mathcal{U}_{\omega}(n)
\tilde{u} \rVert \le 6 \varkappa^2(\theta_{n-1}\omega) \,
\big(\ln{\varkappa(\omega)} \big)\, q(\theta_{1}\omega) \dots
q(\theta_{n-1}\omega).
\end{equation}
\end{itemize}
\end{proposition}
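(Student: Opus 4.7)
The strategy for part (i) is to decompose the time-$t_1$ cocycle via \eqref{eq-cocycle-projective} as
\begin{equation*}
\mathcal{U}_{\theta_{-t}\omega}(t_1) = \mathcal{U}_{\omega}(t_1-t) \circ \mathcal{U}_{\theta_{-1}\omega}(1) \circ \cdots \circ \mathcal{U}_{\theta_{-\intpart{t}+1}\omega}(1) \circ \mathcal{U}_{\theta_{-\intpart{t}}\omega}(1) \circ \mathcal{U}_{\theta_{-t}\omega}(t-\intpart{t}),
\end{equation*}
i.e.\ $\intpart{t}\ge 2$ unit-time maps bracketed by two (possibly identity) fractional evolutions. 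Starting with $u,\tilde u \in \mathcal{S}_1(X^+)$, push them through the initial fractional step (non-expansive in the Hilbert projective metric by Lemma~\ref{lm-projective-diameter-nonincreasing}) and then apply the first unit step $\mathcal{U}_{\theta_{-\intpart{t}}\omega}(1)$. By Lemma~\ref{lm-estimate-projective-diameter} both images lie in $C_{\mathbf e}$ at projective distance at most $\ln\varkappa(\theta_{-\intpart{t}}\omega)$ from $\mathbf e$, so Lemma~\ref{lemma-projective-distance}(3) bounds their mutual projective distance by $2\ln\varkappa(\theta_{-\intpart{t}}\omega)$.

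The subsequent $\intpart{t}-1$ unit maps then contract the projective distance by the factors $q(\theta_{-\intpart{t}+1}\omega),\ldots,q(\theta_{-1}\omega)$, each strictly less than $1$ by Lemma~\ref{lm-tau-finite}, while the trailing step $\mathcal{U}_\omega(t_1-t)$ is again non-expansive. Writing $u_{\mathrm f},\tilde u_{\mathrm f}$ for the final images, this yields
\begin{equation*}
d(u_{\mathrm f},\tilde u_{\mathrm f}) \le 2\ln\varkappa(\theta_{-\intpart{t}}\omega)\cdot q(\theta_{-\intpart{t}+1}\omega)\cdots q(\theta_{-1}\omega).
\end{equation*}
A second, complementary bound comes from re-invoking Lemma~\ref{lm-estimate-projective-diameter} at the \emph{last} unit step $\mathcal{U}_{\theta_{-1}\omega}(1)$: both images land in $C_{\mathbf e}$ within projective distance $\ln\varkappa(\theta_{-1}\omega)$ of $\mathbf e$, giving $d(u_{\mathrm f},\tilde u_{\mathrm f}) \le 2\ln\varkappa(\theta_{-1}\omega)$ (this again survives the non-expansive trailing step).

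Combining the two bounds through the elementary inequality $e^x-1 \le xe^x$ for $x\ge 0$---using the chain bound to control the factor $x$ and the focusing bound to pin $e^x \le \varkappa^2(\theta_{-1}\omega)$---and converting projective distance to norm distance via Lemma~\ref{lemma-projective-vs-norm} produces the constant $3\cdot 2 = 6$ and yields \eqref{Cauchy-estimate-02}; the integer case \eqref{Cauchy-estimate-020} is the specialization $t=t_1=n$. Part (ii) is the forward-time mirror image: decompose
\begin{equation*}
\mathcal{U}_\omega(t) = \mathcal{U}_{\theta_{\intpart{t}}\omega}(t-\intpart{t})\circ \mathcal{U}_{\theta_{\intpart{t}-1}\omega}(1)\circ\cdots\circ \mathcal{U}_{\theta_1\omega}(1)\circ \mathcal{U}_\omega(1),
\end{equation*}
treat $\mathcal{U}_\omega(1)$ as the first focusing step (furnishing the $\ln\varkappa(\omega)$ factor), let the last unit step $\mathcal{U}_{\theta_{\intpart{t}-1}\omega}(1)$ supply the $\varkappa^2(\theta_{\intpart{t}-1}\omega)$ bound on $e^d$, and absorb the trailing sub-unit step by non-expansivity. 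The only genuinely delicate point is the bookkeeping of index shifts; once the decomposition is fixed, every step is a direct application of the lemmas already at hand.
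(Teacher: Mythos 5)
Your proof is correct and follows essentially the same route as the paper. The only cosmetic differences are that you write the initial block explicitly as $\mathcal{U}_{\theta_{-\intpart{t}}\omega}(1)\circ\mathcal{U}_{\theta_{-t}\omega}(t-\intpart{t})$ rather than the paper's single factor $\mathcal{U}_{\theta_{-t}\omega}(t-\intpart{t}+1)$, and that you carry the trailing map $\mathcal{U}_{\omega}(t_1-t)$ through the whole computation rather than first bounding everything at time $t$ and then appealing to Lemma~\ref{lm-projective-diameter-nonincreasing} at the end; both approaches yield identically the chain-of-$q$'s factor, the $2\ln\varkappa(\theta_{-\intpart{t}}\omega)$ seed from Lemma~\ref{lm-estimate-projective-diameter}, the $\varkappa^2(\theta_{-1}\omega)$ cap on $e^{d}$, and the constant $3\cdot 2=6$ via Lemma~\ref{lemma-projective-vs-norm} and the inequality $e^x-1\le xe^x$.
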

\begin{proof}
(i)  Observe that, by~\eqref{eq-cocycle-projective}, we have
\begin{equation*}
\mathcal{U}_{\theta_{-t}\omega}(t) =
\mathcal{U}_{\theta_{-1}\omega}(1) \circ \dots \circ
\mathcal{U}_{\theta_{-\intpart{t}+1}\omega}(1) \circ
\mathcal{U}_{\theta_{-t}\omega}(t - \intpart{t} +1),
\end{equation*}
consequently, by the definition of $q$,
\begin{multline}
\label{Cauchy-estimate-1}
d(\mathcal{U}_{\theta_{-t}\omega}(t)u,
\mathcal{U}_{\theta_{-t}\omega}(t)\tilde{u}) \\
\le q(\theta_{-\intpart{t}+1}\omega) \dots q(\theta_{-1}\omega) \,
d(\mathcal{U}_{\theta_{-t}\omega}(t-\intpart{t}+1)u,
\mathcal{U}_{\theta_{-t}\omega}(t-\intpart{t}+1)\tilde{u}).
\end{multline}
Since both $\mathcal{U}_{\theta_{-t}\omega}(t-\intpart{t}+1)u$ and
$\mathcal{U}_{\theta_{-t}\omega}(t-\intpart{t}+1)\tilde{u}$ belong to
the image of $\mathcal{S}_1(X^{+})$ under
$\mathcal{U}_{\theta_{-\intpart{t}}\omega}(1)$, it follows from
Lemma~\ref{lm-estimate-projective-diameter} that
\begin{equation}
\label{Cauchy-estimate-2}
d(\mathcal{U}_{\theta_{-t}\omega}(t-\intpart{t}+1)u,
\mathcal{U}_{\theta_{-t}\omega}(t-\intpart{t}+1)\tilde{u}) \le 2
\ln{\varkappa(\theta_{-\intpart{t}}\omega)}.
\end{equation}
As both $\mathcal{U}_{\theta_{-t}\omega}(t)u$ and
$\mathcal{U}_{\theta_{-t}\omega}(t)\tilde{u}$ belong to the image of
$\mathcal{S}_1(X^{+})$ under $\mathcal{U}_{\theta_{-1}\omega}(1)$,
there holds
\begin{equation*}
d(\mathcal{U}_{\theta_{-t}\omega}(t)u,
\mathcal{U}_{\theta_{-t}\omega}(t)\tilde{u}) \le
\tau(\theta_{-1}\omega).
\end{equation*}
By Lemma~\ref{lemma-projective-vs-norm},
\begin{equation*}
\lVert \mathcal{U}_{\theta_{-t}\omega}(t) u -
\mathcal{U}_{\theta_{-t}\omega}(t) \tilde{u} \rVert \le 3 (
\exp{d(\mathcal{U}_{\theta_{-t}\omega}(t) u,
\mathcal{U}_{\theta_{-t}\omega}(t) \tilde{u})} - 1),
\end{equation*}
which is, by standard calculus, $\le 3
d(\mathcal{U}_{\theta_{-t}\omega}(t)u,
\mathcal{U}_{\theta_{-t}\omega}(t)\tilde{u}) \,
\exp{\tau(\theta_{-1}\omega)}$.  Putting the above inequalities
together and applying
Lemmas~\ref{lm-projective-diameter-nonincreasing} and
\ref{lm-estimate-projective-diameter} we obtain
\begin{equation}
\label{Cauchy-estimate-3}
\begin{aligned}
\lVert \mathcal{U}_{\theta_{-t}\omega}(t_1) u -
\mathcal{U}_{\theta_{-t}\omega}(t_1) \tilde{u} \rVert & \le 3
d(\mathcal{U}_{\theta_{-t}\omega}(t_1)u,
\mathcal{U}_{\theta_{-t}\omega}(t_1)\tilde{u}) \,
\exp{\tau(\theta_{-1}\omega)} \\
& \le 3 d(\mathcal{U}_{\theta_{-t}\omega}(t)u,
\mathcal{U}_{\theta_{-t}\omega}(t)\tilde{u}) \,
\exp{\tau(\theta_{-1}\omega)} & \qquad \text{by
Lemma~\ref{lm-projective-diameter-nonincreasing}} \\
& \le 3 d(\mathcal{U}_{\theta_{-t}\omega}(t)u,
\mathcal{U}_{\theta_{-t}\omega}(t)\tilde{u}) \,
\varkappa^2(\theta_{-1}\omega). & \qquad \text{by
Lemma~\ref{lm-estimate-projective-diameter}}
\end{aligned}
\end{equation}
\eqref{Cauchy-estimate-1}--\eqref{Cauchy-estimate-3}
give~\eqref{Cauchy-estimate-02}.

(ii)  Observe that we have
\begin{equation}
\label{Cauchy-estimate-11}
\begin{aligned}
d(\mathcal{U}_{\omega}(\intpart{t})u, {} &
\mathcal{U}_{\omega}(\intpart{t})\tilde{u})
\\
& \le q(\theta_{1}\omega) \dots q(\theta_{\intpart{t}-1}\omega)
\,d(\mathcal{U}_{\omega}(1)u, \mathcal{U}_{\omega}(1)\tilde{u}) &
\quad \text{by
the definition of $q$} \\
& \le \tau(\omega) \, q(\theta_{1}\omega) \dots
q(\theta_{\intpart{t}-1}\omega) \\
& \le 2 \ln{\varkappa(\omega)} \, q(\theta_{1}\omega) \dots
q(\theta_{\intpart{t}-1}\omega). & \qquad \text{by
Lemma~\ref{lm-estimate-projective-diameter}}
\end{aligned}
\end{equation}
As both $\mathcal{U}_{\omega}(\intpart{t})u$ and
$\mathcal{U}_{\omega}(\intpart{t})\tilde{u}$ belong to the image of
$\mathcal{S}_1(X^{+})$ under
$\mathcal{U}_{\theta_{\intpart{t}-1}\omega}(1)$, we obtain, applying
Lemma~~\ref{lm-projective-diameter-nonincreasing}, that
\begin{equation*}
d(\mathcal{U}_{\omega}(t)u, \mathcal{U}_{\omega}(t)\tilde{u}) \le
d(\mathcal{U}_{\omega}(\intpart{t})u,
\mathcal{U}_{\omega}(\intpart{t})\tilde{u}) \le
\tau(\theta_{\intpart{t}-1}\omega).
\end{equation*}
By Lemma~\ref{lemma-projective-vs-norm},
\begin{equation*}
\lVert \mathcal{U}_{\omega}(t) u - \mathcal{U}_{\omega}(t) \tilde{u}
\rVert \le 3 ( \exp{d(\mathcal{U}_{\omega}(t) u,
\mathcal{U}_{\omega}(t) \tilde{u})} - 1),
\end{equation*}
which is, by standard calculus, $\le 3 d(\mathcal{U}_{\omega}(t)u,
\mathcal{U}_{\omega}(t)\tilde{u}) \,
\exp{\tau(\theta_{\intpart{t}-1}\omega)}$.  An application of
Lemma~\ref{lm-estimate-projective-diameter} yields
\begin{equation}
\label{Cauchy-estimate-12}
\lVert \mathcal{U}_{\omega}(t) u - \mathcal{U}_{\omega}(t) \tilde{u}
\rVert \le 3 d(\mathcal{U}_{\omega}(t)u,
\mathcal{U}_{\omega}(t)\tilde{u}) \,
\varkappa^2(\theta_{\intpart{t}-1}\omega).
\end{equation}
\eqref{Cauchy-estimate-11} and \eqref{Cauchy-estimate-12} give
\eqref{Cauchy-estimate-03}.
\end{proof}

\begin{proposition}
\label{prop-auxiliary-q}
Let $I := \int_{\Omega} \ln{q} \, d\PP$.  Then there exists an
invariant $\bar{\Omega}_1 \subset \Omega$, $\PP(\bar{\Omega}_1) = 1$,
with the property that
\begin{enumerate}
\item[{\rm (1)}]
for any $I < J < 0$ and any $\omega \in \bar{\Omega}_1$ there is
$C_1(J, \omega) > 0$ such that
\begin{equation*}
\lVert \mathcal{U}_{\theta_{-t}\omega}(t) u -
\mathcal{U}_{\theta_{-t}\omega}(t) \tilde{u} \rVert \le C_1(J,
\omega) e^{Jt}
\end{equation*}
for any $t \in \TT^{+}$, $t \ge 3$, and any $u, \tilde{u} \in
\mathcal{S}_1(X^{+})$.
\item[{\rm (2)}]
for any $I < J < 0$ and any $\omega \in \bar{\Omega}_1$ there is
$C_2(J, \omega) > 0$ such that
\begin{equation*}
\lVert \mathcal{U}_{\omega}(t) u - \mathcal{U}_{\omega}(t)
\tilde{u} \rVert \le C_2(J, \omega) e^{Jt}
\end{equation*}
for any $t \in \TT^{+}$, $t \ge 2$, and any $u, \tilde{u} \in
\mathcal{S}_1(X^{+})$.
\end{enumerate}
\end{proposition}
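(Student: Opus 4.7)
The plan is to estimate the Cauchy-type bound from Proposition~\ref{prop-Cauchy-estimate} by invoking the Birkhoff Ergodic Theorem applied to $\ln q$, together with the subexponential growth of the $\varkappa$ factor. By Lemma~\ref{lemma-measurability} the function $q$ is $(\mathfrak{F},\mathfrak{B}(\RR))$-measurable, and by Lemma~\ref{lm-tau-finite} we have $0 < q(\omega) < 1$ for every $\omega \in \Omega$, so $\ln q$ is measurable and $(\ln q)^{+} \equiv 0 \in L_1(\OFP)$. Thus Theorem~\ref{Birkhoff-thm} applies: since the base flow is ergodic, there is an invariant full-measure set on which
\[
\lim_{n\to\infty} \frac{1}{n} \sum_{i=1}^{n-1} \ln q(\theta_{-i}\omega) = \lim_{n\to\infty} \frac{1}{n} \sum_{i=1}^{n-1} \ln q(\theta_{i}\omega) = I \in [-\infty, 0).
\]
Moreover, (A3) gives $\lnplus{\ln \varkappa} \in L_1(\OFP)$, so by Lemma~\ref{lemma-zero} applied to $\lnplus{\ln \varkappa}$, we have $(1/n)\lnplus{\ln \varkappa(\theta_{\pm n}\omega)} \to 0$ on a full-measure invariant set. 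Let $\bar{\Omega}_1$ be the intersection of these sets.

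Now fix $\omega \in \bar{\Omega}_1$ and $I < J < 0$. Choose an auxiliary $J'$ with $I < J' < J$ and an $\varepsilon > 0$ with $J' + \varepsilon < J$. By the two limits above, there is an integer $N(\omega, J, \varepsilon)$ such that for every $n \ge N(\omega, J, \varepsilon)$,
\[
q(\theta_{-n+1}\omega) \cdots q(\theta_{-1}\omega) \le e^{(n-1)J'}, \qquad \ln \varkappa(\theta_{-n}\omega) \le e^{\varepsilon n},
\]
and the analogous pair of inequalities on the positive side. Plugging these into \eqref{Cauchy-estimate-020} of Proposition~\ref{prop-Cauchy-estimate}(i), taking $n = \intpart{t}$ for $t \ge N(\omega,J,\varepsilon)+1$, the right-hand side is bounded by
\[
6 \varkappa^2(\theta_{-1}\omega) \, e^{\varepsilon \intpart{t}} \, e^{(\intpart{t}-1)J'} \le 6 \varkappa^2(\theta_{-1}\omega) \, e^{-J'} \, e^{(J'+\varepsilon)(t-1)} \le C_1(J,\omega) \, e^{J t},
\]
using $\intpart{t} \ge t-1$ and $J' + \varepsilon < J < 0$. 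For the remaining finite range $3 \le t \le N(\omega,J,\varepsilon)+1$, the left-hand side is bounded above by $2$ (since $u, \tilde u$ lie in the unit sphere), so one enlarges $C_1(J,\omega)$ to absorb these finitely many values. This proves~(1); assertion~(2) is obtained verbatim from Proposition~\ref{prop-Cauchy-estimate}(ii) by the same argument, applied to the forward Birkhoff averages and the forward orbit of $\ln \varkappa$, which justifies taking the threshold $t \ge 2$.

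The main obstacle is the bookkeeping of indices between discrete and continuous time: one must verify that $\intpart{t}$ appears in the exponent with the correct sign so that the estimate $\intpart{t} \ge t-1$ is used in the direction that preserves the bound $e^{(J'+\varepsilon)\intpart{t}} \le e^{-(J'+\varepsilon)} e^{(J'+\varepsilon)t}$ (recall $J'+\varepsilon < 0$). A minor subtlety is the case $I = -\infty$: there Birkhoff's theorem still yields $(1/n)\sum \ln q(\theta_{\pm i}\omega) \to -\infty$, so for any $J < 0$ one may choose $J'$ simply with $J' < J$, and the argument above is unchanged.
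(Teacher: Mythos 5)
Your argument is correct and matches the paper's in the discrete-time case, and the observation about $I=-\infty$ is a worthwhile explicit remark. However, the continuous-time case has a genuine gap that you have not addressed. You obtain the limit
\[
\lim_{n\to\infty}\frac{1}{n}\sum_{i=1}^{n}\ln q(\theta_{\pm i}\omega)=I
\]
on ``an invariant full-measure set,'' justified only by ``since the base flow is ergodic.'' But when $\TT=\RR$, the Birkhoff Ergodic Theorem is being applied to the discrete-time subsystem $(\theta_n)_{n\in\ZZ}$, and ergodicity of the flow $(\theta_t)_{t\in\RR}$ does \emph{not} imply ergodicity of the time-one map. Thus the limit is a priori a non-constant $\theta_1$-invariant function $(\ln q)_{\mathrm{av}}(\omega)$ whose integral equals $I$, and the set on which the limit exists is only $\theta_1$-invariant, not invariant under the whole flow. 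The paper handles both issues explicitly: it writes $\bar{\Omega}_1^{(1)}:=\bigcup_{0\le T\le 1}\theta_T(\Omega'\cap\Omega'')$ (using completeness of $\PP$ to guarantee measurability), re-expresses the estimate for $\omega=\theta_T\tilde\omega$ in terms of $\tilde\omega$ and $t-T$ via \eqref{Cauchy-estimate-02} so that all shifts are integer shifts of $\tilde\omega\in\Omega'\cap\Omega''$, and then argues that the flow-invariant function $\omega\mapsto\limsup_{t\to\infty}\sup_{u,\tilde u}\frac{1}{t}\ln\|\mathcal{U}_{\theta_{-t}\omega}(t)u-\mathcal{U}_{\theta_{-t}\omega}(t)\tilde u\|$ must be $\PP$-a.e.\ constant and $\le I$. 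None of this machinery appears in your write-up; you identify ``bookkeeping of $\intpart{t}$'' as the main obstacle, but that is a minor point compared to the ergodicity and invariance issues.

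A smaller inaccuracy: you quote \eqref{Cauchy-estimate-020} (integer $n$) with $n=\intpart{t}$, but the left-hand side of what you must bound is $\|\mathcal{U}_{\theta_{-t}\omega}(t)u-\mathcal{U}_{\theta_{-t}\omega}(t)\tilde u\|$ with real $t$; the estimate actually used is \eqref{Cauchy-estimate-02} with $t_1=t$. The conclusion survives, but the reference should be to the real-time version.
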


\begin{proof}
(1)  Consider first the discrete\nobreakdash-\hspace{0pt}time case.

It follows from the Birkhoff Ergodic Theorem
(Theorem~\ref{Birkhoff-thm}(i)) applied to the $(\mathfrak{F},
\mathfrak{B}(\RR))$\nobreakdash-\hspace{0pt}measurable function
$\ln{q} \colon \Omega \to (-\infty,0)$ that the invariant set
$\Omega'$ consisting of those $\omega \in \Omega$ for which
\begin{equation}
\label{Birkhoff-1}
\lim_{n \to \infty} \frac{1}{n} \sum_{i = 1}^{n}
\ln{q(\theta_{-i}\omega)} = I
\end{equation}
has $\PP$\nobreakdash-\hspace{0pt}measure one.  Since
$\ln^+\ln{\varkappa} \in L_1(\OFP)$, Lemma~\ref{lemma-zero}
establishes the existence of an invariant $\Omega'' \subset \Omega$,
$\PP(\Omega'') = 1$, such that
\begin{equation}
\label{Birkhoff-3}
\limsup_{n \to \infty} \frac{\ln{\ln{\varkappa(\theta_{-n}\omega)}}}{n}
\le 0 \qquad \text{for all } \omega \in \Omega''.
\end{equation}
Let $\bar{\Omega}_1^{(1)} := \Omega' \cap \Omega''$.  The set
$\bar{\Omega}_1^{(1)}$ is invariant, with
$\PP(\tilde{\Omega}_1^{(1)}) = 1$.  By~\eqref{Cauchy-estimate-02},
\eqref{Birkhoff-1} and~\eqref{Birkhoff-3},
\begin{equation*}
\limsup\limits_{n \to \infty} \sup\limits_{u, \tilde{u} \in
\mathcal{S}_1(X^{+})}
\frac{\ln{\lVert \mathcal{U}_{\theta_{-n}\omega}(n) u -
\mathcal{U}_{\theta_{-n}\omega}(n) \tilde{u} \rVert}}{n} \le I
\end{equation*}
for each $\omega \in \bar{\Omega}_1^{(1)}$.  Therefore, for any $J
\in (I, 0)$ and any $\omega \in \bar{\Omega}_1^{(1)}$ there is $N =
N(J, \omega) \in \NN$ such that
\begin{equation*}
\lVert \mathcal{U}_{\theta_{-n}\omega}(n) u -
\mathcal{U}_{\theta_{-n}\omega}(n) \tilde{u} \rVert \le Jn
\end{equation*}
for all $n = N, N+1, \dots$ and any $u, \tilde{u} \in
\mathcal{S}_1(X^{+})$.  It suffices now to apply the
estimate~\eqref{Cauchy-estimate-020} to $n = 3, 4, \dots, N-1$ to get
the desired result.

\smallskip
We proceed now to the continuous\nobreakdash-\hspace{0pt}time case.

It follows from the Birkhoff Ergodic Theorem
(Theorem~\ref{Birkhoff-thm}(i)) applied to the
discrete\nobreakdash-\hspace{0pt}time metric dynamical system $(\OFP,
(\theta_{n})_{n \in \ZZ})$ and the $(\mathfrak{F},
\mathfrak{B}(\RR))$\nobreakdash-\hspace{0pt}measurable function
$\ln{q} \colon \Omega \to (-\infty,0)$ that the set $\Omega'$ of
those $\omega \in \Omega$ for which the limit
\begin{equation}
\label{Birkhoff-1-cont}
\lim_{n \to \infty} \frac{1}{n} \sum_{i =
1}^{n} \ln{q(\theta_{-i}\omega)} =: (\ln{q})_{\mathrm{av}}(\omega)
\end{equation}
exists has $\PP$\nobreakdash-\hspace{0pt}measure one.  Since
$\ln^+\ln{\varkappa} \in L_1(\OFP)$ (see (A3)),
Lemma~\ref{lemma-zero} establishes the existence of $\Omega'' \subset
\Omega$, $\theta_{1}(\Omega') = \Omega'$, $\PP(\Omega'') = 1$, such
that
\begin{equation}
\label{Birkhoff-3-cont}
\limsup_{n \to \infty} \frac{\ln{\ln{\varkappa(\theta_{-n}\omega)}}}{n}
\le 0 \qquad \text{for all } \omega \in \Omega''.
\end{equation}

Let $\bar{\Omega}_1^{(1)} := \bigcup\limits_{0\leq T\leq
1}\theta_{T}(\Omega' \cap \Omega'')$.  The set $\bar{\Omega}_1^{(1)}$
is invariant, and contains the set $\Omega' \cap \Omega''$ of full
$\PP$\nobreakdash-\hspace{0pt}measure.  Since $\PP$ is complete,
$\bar{\Omega}_1^{(1)} \in \mathfrak{F}$ and
$\PP(\bar{\Omega}_1^{(1)}) = 1$.  By~\eqref{Cauchy-estimate-02}, we
estimate, for any $\omega \in \bar{\Omega}_1^{(1)}$ of the form
$\theta_{T}\tilde{\omega}$, $T \in [0,1)$, $\tilde{\omega} \in
\Omega' \cap \Omega''$,
\begin{equation}
\label{Birkhoff-11-cont}
\begin{aligned}
\lVert \mathcal{U}_{\theta_{-t}\omega}(t) u -
\mathcal{U}_{\theta_{-t}\omega}(t) \tilde{u} \lVert & = \lVert
\mathcal{U}_{\theta_{-t + T}\tilde\omega}(t) u -
\mathcal{U}_{\theta_{-t + T}\tilde\omega}(t) \tilde{u} \rVert
\\
&
\le 6 \varkappa^2(\theta_{-1}\tilde{\omega}) \,
\ln{\varkappa(\theta_{-\intpart{t-T}}\tilde{\omega})} \,
q(\theta_{-\intpart{t-T}+1}\tilde{\omega}) \dots
q(\theta_{-1}\tilde{\omega}),
\end{aligned}
\end{equation}
where $t \ge 3$ and $u, \tilde{u} \in \mathcal{S}_1(X^{+})$.

It follows from~\eqref{Birkhoff-1-cont}, \eqref{Birkhoff-3-cont}
and~\eqref{Birkhoff-11-cont} that
\begin{equation*}
\limsup\limits_{t \to \infty} \sup\limits_{u, \tilde{u} \in
\mathcal{S}_1(X^{+})} \frac{\ln{\lVert
\mathcal{U}_{\theta_{-t}\omega}(t) u -
\mathcal{U}_{\theta_{-t}\omega}(t) \tilde{u} \rVert}}{t} \le
(\ln{q})_{\mathrm{av}}(\tilde{\omega})
\end{equation*}
for all $\omega \in \bar{\Omega}_1^{(1)}$.  Since
$(\ln{q})_{\mathrm{av}}(\theta_{1}\tilde{\omega}) =
(\ln{q})_{\mathrm{av}}(\tilde{\omega})$, we have that the
left\nobreakdash-\hspace{0pt}hand side of the above inequality is,
for all $\omega \in \bar{\Omega}_1^{(1)}$, bounded above by a
constant whose integral over $\Omega$ is not larger than $I$.
Therefore, for any $J \in (I, 0)$ and any $\omega \in
\bar{\Omega}_1^{(1)}$ there is $\tau = \tau(J, \omega) > 0$ such that
\begin{equation*}
\lVert \mathcal{U}_{\theta_{-t}\omega}(t) u -
\mathcal{U}_{\theta_{-t}\omega}(t) \tilde{u} \rVert \le Jt
\end{equation*}
for all $t \ge \tau$ and any $u, \tilde{u} \in \mathcal{S}_1(X^{+})$.
It suffices now to apply the estimate~\eqref{Cauchy-estimate-020} to
$t \in [3, \tau)$ to get the desired result.

A proof of Part (2) is similar: we find an invariant set
$\bar{\Omega}_1^{(2)}$ with $\PP(\bar{\Omega}_1^{(2)}) = 1$ having
the corresponding properties.

The required set $\bar{\Omega}_1$ is defined as $\bar{\Omega}_1^{(1)}
\cap \bar{\Omega}_1^{(2)}$.
\end{proof}

Let $J \in (I, 0)$, where $I$ is as in Proposition
\ref{prop-auxiliary-q}. For any $\omega \in \bar{\Omega}_1$ and $t
\in \TT^{+}$, $3 \le s \le t$ we obtain, via the equality
$\mathcal{U}_{\theta_{-t}\omega}(t) =
\mathcal{U}_{\theta_{-s}\omega}(s) \circ
\mathcal{U}_{\theta_{-t}\omega}(t-s)$, that
\begin{equation*}
\norm{\mathcal{U}_{\theta_{-s}\omega}(s)\mathbf{e} -
\mathcal{U}_{\theta_{-t}\omega}(t)\mathbf{e}} = \rVert
\mathcal{U}_{\theta_{-s}\omega}(s) \mathbf{e} -
\mathcal{U}_{\theta_{-s}\omega}(s)
(\mathcal{U}_{\theta_{-t}\omega}(t-s)\mathbf{e})  \rVert \le
C_1(J,\omega) e^{Js},
\end{equation*}
which allows us to define
\begin{equation}
\label{w-def}
w(\omega) := \lim\limits_{s\to\infty}
\mathcal{U}_{\theta_{-s}\omega}(s)\mathbf{e},
\end{equation}
where the limit is taken in the $X$\nobreakdash-\hspace{0pt}norm.

Since $w(\omega) = \lim_{n\to\infty}
\mathcal{U}_{\theta_{-n}\omega}(n)\mathbf{e}$ and
$\mathcal{U}_{\theta_{-n}\omega}(n)\mathbf{e} \in X^{+}$, we have
that $w(\omega) \in X^{+}$.  Moreover,  as the functions $[\, \omega
\mapsto \mathcal{U}_{\theta_{-n}\omega}(n)\mathbf{e}\, ]$ are
$(\mathfrak{F}, \mathfrak{B}(X))$\nobreakdash-\hspace{0pt}measurable,
$w \colon \bar{\Omega}_1 \to X$ is measurable. We will prove that
$w(\cdot)$ satisfies Theorem \ref{theorem-w}.

\begin{proposition}
\label{w-prop}
\begin{itemize}
\item[{\rm (1)}]
There is a $\tilde{\sigma}_1 > 0$ such that for each $\omega \in
\bar{\Omega}_1$.
\begin{equation*}
\limsup_{\substack{t\to\infty \\ t \in \TT^{+}}} \frac{1}{t}
\ln{\sup{\left\{ \left\lVert \frac{U_{\theta_{-t}\omega}(t)u}
{\norm{U_{\theta_{-t}\omega}(t)u}} - w(\omega) \right\rVert: u \in
X^{+}, \ \norm{u} = 1 \right\}}} \le -\tilde{\sigma}_1.
\end{equation*}
\item[{\rm (2)}]
For each $\omega \in \bar{\Omega}_1$,
\begin{equation*}
\limsup_{\substack{t\to\infty \\ t \in \TT^{+}}} \frac{1}{t}
\ln{\sup{\left\{ \left\lVert
\frac{U_{\omega}(t)u}{\norm{U_{\omega}(t)u}} -
w(\theta_{t}\omega) \right\rVert : u \in X^{+}, \ \norm{u} = 1
\right\} } } \le -\tilde{\sigma}_1.
\end{equation*}
\end{itemize}
\end{proposition}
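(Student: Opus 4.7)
The plan is to set $\tilde{\sigma}_1 := -J$ for any fixed $J \in (I,0)$, which is a nonempty interval because Lemma~\ref{lm-tau-finite} gives $q(\omega) < 1$ pointwise and hence $I = \int_{\Omega} \ln q \, d\PP < 0$, then derive both (1) and (2) from Proposition~\ref{prop-auxiliary-q} via the triangle inequality centered at the ``reference'' point $\mathbf{e}$. Note that $w(\omega) \in \mathcal{S}_1(X^{+})$, since it is a norm-limit in the closed set $X^{+}$ of elements of norm $1$.

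For (1), for $u \in \mathcal{S}_1(X^{+})$ and $t \geq 3$ I would write
\begin{equation*}
\lVert \mathcal{U}_{\theta_{-t}\omega}(t) u - w(\omega) \rVert
\le \lVert \mathcal{U}_{\theta_{-t}\omega}(t) u - \mathcal{U}_{\theta_{-t}\omega}(t) \mathbf{e} \rVert
+ \lVert \mathcal{U}_{\theta_{-t}\omega}(t) \mathbf{e} - w(\omega) \rVert.
\end{equation*}
The first term is bounded by $C_1(J,\omega) e^{Jt}$ directly from Proposition~\ref{prop-auxiliary-q}(1). For the second, apply the cocycle identity $\mathcal{U}_{\theta_{-s}\omega}(s)\mathbf{e} = \mathcal{U}_{\theta_{-t}\omega}(t)\bigl(\mathcal{U}_{\theta_{-s}\omega}(s-t)\mathbf{e}\bigr)$ for $s > t$ and use Proposition~\ref{prop-auxiliary-q}(1) with $u$ replaced by $\mathcal{U}_{\theta_{-s}\omega}(s-t)\mathbf{e} \in \mathcal{S}_1(X^{+})$ to get $\lVert \mathcal{U}_{\theta_{-s}\omega}(s)\mathbf{e} - \mathcal{U}_{\theta_{-t}\omega}(t)\mathbf{e}\rVert \leq C_1(J,\omega) e^{Jt}$; passing $s \to \infty$ yields the same bound for the second term. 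The uniformity in $u$ built into Proposition~\ref{prop-auxiliary-q} gives a bound of $2 C_1(J,\omega) e^{Jt}$ uniform over $\mathcal{S}_1(X^{+})$, whence $\limsup_{t\to\infty} t^{-1} \ln(\cdots) \le J = -\tilde{\sigma}_1$.

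For (2), I would first establish the equivariance identity $w(\theta_t\omega) = \mathcal{U}_{\omega}(t) w(\omega)$, which follows from the continuity of $\mathcal{U}_{\omega}(t)$ on $\mathcal{S}_1(X^{+})$ and the cocycle identity:
\begin{equation*}
w(\theta_t\omega) = \lim_{r\to\infty} \mathcal{U}_{\theta_{-r}\omega}(r+t)\mathbf{e}
= \mathcal{U}_{\omega}(t) \Bigl( \lim_{r\to\infty} \mathcal{U}_{\theta_{-r}\omega}(r)\mathbf{e} \Bigr)
= \mathcal{U}_{\omega}(t) w(\omega).
\end{equation*}
Then, since $w(\omega) \in \mathcal{S}_1(X^{+})$, one has directly
\begin{equation*}
\lVert \mathcal{U}_{\omega}(t) u - w(\theta_t\omega) \rVert
= \lVert \mathcal{U}_{\omega}(t) u - \mathcal{U}_{\omega}(t) w(\omega) \rVert
\le C_2(J,\omega) e^{Jt}
\end{equation*}
by Proposition~\ref{prop-auxiliary-q}(2), uniformly over $u \in \mathcal{S}_1(X^{+})$, giving the asserted bound with the same $\tilde{\sigma}_1 = -J$.

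The only delicate points are the choice of $\tilde{\sigma}_1$ (uniform in $\omega$, which is automatic because $J$ is chosen independently of $\omega$) and the equivariance of $w$, which reduces to interchanging $\mathcal{U}_{\omega}(t)$ with a norm-limit on $\mathcal{S}_1(X^{+})$; this is legitimate because $\mathcal{U}_{\omega}(t)$ is continuous on $\mathcal{S}_1(X^{+})$ as noted just after equation~\eqref{eq-cocycle-projective}. I do not foresee a genuine obstacle: Proposition~\ref{prop-auxiliary-q} already packages the hard ergodic work, and everything else is a triangle-inequality bookkeeping argument.
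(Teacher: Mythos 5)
Your proof is correct and follows essentially the same path as the paper: both parts reduce to Proposition~\ref{prop-auxiliary-q} via triangle-inequality bookkeeping at the reference point $\mathbf{e}$, and part~(1) is identical to the paper's argument. The only variation is in part~(2), where you first establish the equivariance $w(\theta_{t}\omega) = \mathcal{U}_{\omega}(t)w(\omega)$ (which the paper postpones to the proof of Theorem~\ref{theorem-w}(1) and merely waves at here by saying ``just as~(1)'') and then invoke Proposition~\ref{prop-auxiliary-q}(2) directly with $u$ and $w(\omega)$, which is a slightly cleaner organization but the same underlying content.
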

\begin{proof}
(1) It follows from Proposition \ref{prop-auxiliary-q}(1) and the
definition of $w(\omega)$ that
\begin{equation*}
\norm{\mathcal{U}_{\theta_{-t}\omega}(t)\mathbf{e} - w(\omega)} \le
C_1(J,\omega) e^{Jt} \quad \text{and} \quad \lVert
\mathcal{U}_{\theta_{-t}\omega}(t) \mathbf{e} -
\mathcal{U}_{\theta_{-t}\omega}(t) u \rVert \le C_1(J,\omega) e^{Jt}
\end{equation*}
consequently
\begin{equation*}
\norm{\mathcal{U}_{\theta_{-t}\omega}(t)u - w(\omega)} \le 2
C_1(J,\omega) e^{Jt}
\end{equation*}
for any $\omega \in \bar{\Omega}_1$, any $t \in \TT^{+}$, $t \ge 3$,
and any $u \in \mathcal{S}_1(X^{+})$.  Therefore
\begin{equation*}
\limsup_{\substack{t\to\infty \\ t \in \TT^{+}}} \frac{1}{t}
\ln{\sup{\{\norm{\mathcal{U}_{\theta_{-t}\omega}(t)u - w(\omega)}}: u
\in \mathcal{S}_1(X^{+})\}} \le J,
\end{equation*}
and, since $J > I$ can be taken arbitrarily close to $I$,
\begin{equation*}
\limsup_{\substack{t\to\infty \\ t \in \TT^{+}}} \frac{1}{t}
\ln{\sup{\{\norm{\mathcal{U}_{\theta_{-t}\omega}(t)u(t) -
w(\omega)}}: u \in \mathcal{S}_1(X^{+})\}} \le \int_{\Omega} \ln{q}
\, d\PP
\end{equation*}
for any $\omega \in \bar{\Omega}_1$. (1) thus holds.

\smallskip
(2) is proved just as (1), with Proposition \ref{prop-auxiliary-q}(1)
replaced by Proposition \ref{prop-auxiliary-q}(2).
\end{proof}

\begin{proof} [Proof of Theorem \ref{theorem-w}]

(1) Observe that
\begin{equation*}
\begin{aligned}
\mathcal{U}_{\omega}(t)w(\omega) = {} &
\mathcal{U}_{\omega}(t)(\lim\limits_{s\to\infty}
\mathcal{U}_{\theta_{-s}\omega}(s)\mathbf{e}) \\
= {} & \lim\limits_{s\to\infty}
\mathcal{U}_{\theta_{-s}\omega}(s+t)\mathbf{e} \\
= {} & \lim\limits_{s\to\infty}
\mathcal{U}_{\theta_{-s}(\theta_{t}\omega)}(s)
(\mathcal{U}_{\theta_{-s}\omega}(t)\mathbf{e})
\end{aligned}
\end{equation*}
for any $\omega \in \bar\Omega_1$ and $t \in \TT$.  By Proposition
\ref{prop-auxiliary-q}(1),
\begin{equation*}
\lVert \mathcal{U}_{\theta_{-s}(\theta_{t}\omega)}(s) \mathbf{e} -
\mathcal{U}_{\theta_{-s}(\theta_{t}\omega)}(s)
(\mathcal{U}_{\theta_{-s}\omega}(t)\mathbf{e}) \rVert \le
C_1(J,\theta_{t}\omega) e^{Js},
\end{equation*}
from which it follows that $\mathcal{U}_{\omega}(t)w(\omega) =
w(\theta_{t}\omega)$. (1) is thus proved with $\tilde{\Omega}_1 =
\bar{\Omega}_1$.

For any $\omega \in \bar{\Omega}_1$, as $w(\omega) =
\mathcal{U}_{\theta_{-1}\omega}(1) w(\theta_{-1}\omega)$,
Lemma~\ref{lm-estimate-projective-diameter} implies that $w(\omega)
\in C_{\mathbf{e}}$.

(2) It follows from Proposition \ref{w-prop}(1) that
\begin{equation*}
\lim\limits_{\substack{s \to \infty \\ s\in \TT^{+}}} \biggl\lVert
\mathcal{U}_{\theta_{-s}(\theta_{t}\omega)}(s)
\frac{v_{\omega}(-s+t)} {\norm{v_{\omega}(-s+t)}} -
w(\theta_{t}\omega) \biggr\rVert = 0
\end{equation*}
for each $t \in \TT$ and $\omega \in \bar{\Omega}_1$.

But $\mathcal{U}_{\theta_{-s}\omega}(t) \frac{v_{\omega}(-s+t)}
{\norm{v_{\omega}(-s+t)}}$ is equal, for each $s \in \TT^{+}$, to
$\frac{v_{\omega}(t)}{\norm{v_{\omega}(t)}}$.  Thus we have
$v_{\omega}(t) = \norm{v_{\omega}(t)} w(\theta_{t}\omega)$ for all $t
\in \TT$.  As, for each $t \in \TT$, both $v_{\omega}(t)$ and
$w_{\omega}(t)$ belong to the one\nobreakdash-\hspace{0pt}dimensional
subspace $\tilde{E}_1(\theta_{t}\omega)$, we must have that
$\norm{v_{\omega}(t)}/\norm{w_{\omega}(t)}$ is constant and then
$v_\omega(t) = \norm{v_\omega(0)} w_\omega(t)$.  Hence (2) holds with
$\tilde{\Omega}_1 = \bar{\Omega}_1$.

(3) The mapping
\begin{equation*}
[\, (t,\omega) \ni \TT \times \bar{\Omega}_1 \mapsto
\ln{\rho_{t}(\omega)} \in (-\infty,\infty) \,]
\end{equation*}
is $(\mathfrak{B}(\TT) \otimes \mathfrak{F},
\mathfrak{B}(\RR))$\nobreakdash-\hspace{0pt}measurable. We have
\begin{equation*}
\ln{\rho_{s+t}(\omega)} = \ln{\rho_{t}(\theta_{s}\omega)} +
\ln{\rho_{s}(\omega)}\quad \text{for any}\,\,  s, t \in \TT \
\text{and any}\,\, \omega \in \tilde{\Omega}_1.
\end{equation*}
In the discrete\nobreakdash-\hspace{0pt}time case, the Birkhoff
Ergodic Theorem (Theorem~\ref{Birkhoff-thm}(i)) applied to
$\ln{\rho_1}$ (observe that, by (A1)(i), $\lnplus{\rho_1} \in
L_1(\OFP)$) guarantees the existence of $\tilde{\lambda}_1 \in
[-\infty,\infty)$ and an invariant ${\Omega}_1^{'} \subset
\bar{\Omega}_1$, $\PP(\hat{\Omega}) = 1$ such that
\begin{equation*}
\tilde{\lambda}_1 = \lim\limits_{n\to\pm\infty} \frac{1}{n}
\ln{\rho_{n}(\omega)} = \int\limits_{\Omega} \ln{\rho_{1}}\, d\PP.
\end{equation*}
for all $\omega \in {\Omega}_1^{'}$. (3) then holds with
$\tilde\Omega_1=\Omega_1^{'}$.

In the continuous\nobreakdash-\hspace{0pt}time case, applying the
Birkhoff Ergodic Theorem (Theorem~\ref{Birkhoff-thm}(i)) to $(\OFP,
(\theta_n)_{n \in \ZZ})$ and $\ln{\rho_1}$ we obtain the existence of
$\Omega_1^{''} \subset \bar{\Omega}_1$, $\theta_{1}( \Omega_1^{''}) =
\Omega_1^{''}$, $\PP( \Omega_1^{''}) = 1$, such that
\begin{equation*}
\lim\limits_{n\to\pm\infty} \frac{1}{n} \ln{\rho_{n}(\omega)} =
(\ln{\rho_{1}})_{\mathrm{av}}(\omega)
\end{equation*}
for all $\omega \in \Omega_1^{''}$, where $\int_{\Omega}\ln{\rho_{1}}
\, d\PP = \int_{\Omega}(\ln{\rho_{1}})_{\mathrm{av}} \, d\PP$.  Put
${\Omega}_1^{'''} := \bigcup\limits_{T \in [0,1)}\theta_{T}(
\Omega_1^{''})$.  As $\PP$ is complete, ${\Omega}_1^{'''} \in
\mathfrak{F}$ and $\PP({\Omega}_1^{'''}) = 1$. Using (A1)(i) we prove
that
\begin{equation*}
\lim\limits_{t \to \pm\infty} \frac{1}{t} \ln{\rho_{t}(\omega)} =
\lim\limits_{n \to \pm\infty} \frac{1}{n}
\ln{\rho_{n}(\tilde{\omega})}
\end{equation*}
for any $\omega \in {\Omega}_1^{'''}$ such that $\omega =
\theta_{T}\tilde{\omega}$ for some $\tilde{\omega} \in
\Omega_1^{'''}$ and $T \in [0,1)$ (cf.\ the proof
of~\cite[Lemma~3.4]{Lian-Lu}). Consequently,
$(\ln{\rho_{1}})_{\mathrm{av}}$ is
$\PP$\nobreakdash-\hspace{0pt}a.e.\ constant, so it must be equal to
$\int_{\Omega}\ln{\rho_{1}} \, d\PP$. (3) then holds with
$\tilde\Omega_1=\Omega_1^{'''}$.

\smallskip
(4)  Assume moreover (A1)(ii)--(iii) and that the cone $X^{+}$ is
total.  If Theorem \ref{Oseledets-thm}(1) holds, it is clear that
$\tilde{\lambda}_1 = -\infty$.  If Theorem \ref{Oseledets-thm}(2) or
(3) holds, by Theorem \ref{thm-largest-meets-nontrivially} and (2),
we must have $w(\omega)\in E_1(\omega)$ and hence $\tilde{\lambda}_1
= \lambda_1$. In any case, it follows that for any $u \in X \setminus
\{0\}$,
\begin{equation*}
\limsup_{\substack{t\to\infty \\ t \in \TT^{+}}}
\frac{1}{t}\ln{\norm{U_\omega(t)u}} \le \tilde{\lambda}_1
\end{equation*}
for any $\omega \in \Omega_0 \cap \tilde{\Omega}_1$, where
$\tilde{\Omega}_1$ is as in (3).

\smallskip
(5) Assume moreover (A0)(iii).  By (A0)(iii) and (A2), for any $u \in
X \setminus\{0\}$,
\begin{equation*}
\norm{U_\omega(t) u} \le \norm{U_\omega(t)\abs{u}} \quad \forall
\,\,\omega \in \Omega,\, \, t > 0, \ t \in \TT^{+}.
\end{equation*}
It suffices to prove that
\begin{equation*}
\limsup_{\substack{t\to\infty \\ t \in \TT^{+}}} \frac{1}{t}
\ln{\norm{U_\omega(t)u}} \le \tilde{\lambda}_1
\end{equation*}
for any $u \in X^+ \setminus \{0\}$ and $\omega \in
\tilde{\Omega}_1$, where $\tilde{\Omega}_1$ is as in (3).  By (A3),
for any $u \in X^+ \setminus \{0\}$ and $\omega \in
\tilde{\Omega}_1$,
\begin{equation*}
U_\omega(1) u \le
\frac{\varkappa(\omega)\beta(\omega,u)}{\beta(\omega,w(\omega))}
U_\omega(1) w(\omega)
\end{equation*}
and then, by (A2),
\begin{equation*}
U_\omega(t) u \le
\frac{\varkappa(\omega)\beta(\omega,u)}{\beta(\omega,w(\omega))}
U_\omega(t) w(\omega) \quad \forall t \ge 1, \ t \in \TT^{+}.
\end{equation*}
This together with (A0)(iii) implies that
\begin{equation*}
\norm{U_\omega(t) u} \le
\frac{\varkappa(\omega)\beta(\omega,u)}{\beta(\omega,w(\omega))}
\norm{U_\omega(t) w(\omega)} \quad \forall t \ge 1, \ t \in \TT^{+},
\end{equation*}
and hence
\begin{equation*}
\limsup_{\substack{t\to\infty \\ t \in \TT^{+}}} \frac{1}{t}
\ln{\norm{U_\omega(t)u}} \le \tilde{\lambda}_1.
\end{equation*}
\end{proof}

\begin{proof}[Proof of Theorem \ref{theorem-w-star}]
(1)--(3)  can be proved by arguments similar to those in the proofs
of Theorem \ref{theorem-w}(1)--(3).

\smallskip
(4) For given $\omega \in \tilde{\Omega}_1 \cap \tilde{\Omega}_1^*$
and $t > 0$, $t \in \TT^{+}$, we have
\begin{align*}
\rho^*_t(\omega) \langle
w(\theta_{-t}\omega),w^*(\theta_{-t}\omega)\rangle&
= \langle w(\theta_{-t}\omega), U_\omega^*(t)w^*(\omega) \rangle \\
& =
\langle U_{\theta_{-t}\omega}(t)w(\theta_{-t}\omega),w^*(\omega) \rangle \\
& = \rho_t(\theta_{-t}\omega)\langle w(\omega),w^*(\omega) \rangle \\
& = \frac{1}{\rho_{-t}(\omega)}\langle w(\omega),w^*(\omega) \rangle,
\end{align*}
and
\begin{align*}
\rho_t(\omega) \langle w(\theta_{t}\omega),w^*(\theta_{t}\omega)
\rangle &
= \langle U_{\omega}(t)w(\omega), w^*(\omega) \rangle \\
& = \langle w(\omega), U^{*}_{\theta_{t}\omega}(t)
w^*(\theta_{t}\omega) \rangle \\
& = \rho^{*}_t(\theta_{t}\omega) \langle w(\omega),w^*(\omega) \rangle \\
& = \frac{1}{\rho^{*}_{-t}(\omega)}\langle w(\omega),w^*(\omega)
\rangle,
\end{align*}
which implies that
\begin{align*}
\tilde{\lambda}_1 & = \lim_{\substack{t\to\infty \\ t \in \TT^{+}}}
\biggl(- \frac{1}{t} \ln{\rho_{-t}(\omega)} + \frac{1}{t}
\ln{\langle w(\omega), w^*(\omega) \rangle} \biggr) \\
& = \lim_{\substack{t\to\infty \\ t \in \TT^{+}}} \biggl( \frac{1}{t}
\ln{\rho^{*}_{t}(\omega)} + \frac{1}{t} \ln{\langle
w(\theta_{-t}\omega),w^*(\theta_{-t}\omega)
\rangle} \biggr) \\
& \le \lim_{\substack{t\to\infty \\ t \in \TT^{+}}} \frac{1}{t}
\ln{\rho^{*}_{t}(\omega)} + \limsup_{\substack{t\to\infty \\ t \in
\TT^{+}}} \frac{1}{t} \ln{\langle w(\theta_{-t}\omega),
w^*(\theta_{-t}\omega) \rangle} \\
& = \tilde{\lambda}^{*}_1 + \limsup_{\substack{t\to\infty \\ t \in
\TT^{+}}} \frac{1}{t} \ln{\langle w(\theta_{-t}\omega),
w^*(\theta_{-t}\omega) \rangle},
\end{align*}
and
\begin{align*}
\tilde{\lambda}^{*}_1 & = \lim_{\substack{t\to\infty \\ t \in
\TT^{+}}} \biggl(- \frac{1}{t} \ln{\rho^{*}_{-t}(\omega)} +
\frac{1}{t}
\ln{\langle w(\omega), w^*(\omega) \rangle} \biggr) \\
& = \lim_{\substack{t\to\infty \\ t \in \TT^{+}}} \biggl( \frac{1}{t}
\ln{\rho_{t}(\omega)} + \frac{1}{t} \ln{\langle
w(\theta_{t}\omega),w^*(\theta_{t}\omega)
\rangle} \biggr) \\
& \le \lim_{\substack{t\to\infty \\ t \in \TT^{+}}} \frac{1}{t}
\ln{\rho_{t}(\omega)} + \limsup_{\substack{t\to\infty \\ t \in
\TT^{+}}} \frac{1}{t} \ln{\langle w(\theta_{t}\omega),
w^*(\theta_{t}\omega) \rangle} \\
& = \tilde{\lambda}_1 + \limsup_{\substack{t\to\infty \\ t \in
\TT^{+}}} \frac{1}{t} \ln{\langle w(\theta_{t}\omega),
w^*(\theta_{t}\omega) \rangle}.
\end{align*}
It is enough now to note that $0 \le \langle w(\omega), w^{*}(\omega)
\rangle \le 1$ for any $\omega \in \tilde{\Omega}_1 \cap
\tilde{\Omega}_1^*$.
\end{proof}

\subsection{Generalized exponential separation and proof of Theorem
\ref{separation-thm}}

In this subsection, we study the attractivity properties of
generalized principal Floquet subspaces and prove
Theorem~\ref{separation-thm}.  To this end, we first prove some
auxiliary results.  Throughout this subsection, we assume (A0)(iii),
(A0)$^*$(iii), (A1)$^*$(i), (A2)$^*$, and (A4).

The next result gives the formula for the projection of $X$ on
$\tilde{F}_1(\omega)$ along $\tilde{E}_1(\omega)$.
\begin{lemma}
\label{projection-formula-lemma}
The family $\{\tilde{P}(\omega)\}_{\omega \in \tilde{\Omega}_1 \cap
\tilde{\Omega}^{*}_1}$ of projections associated with the
decomposition $\tilde{E}_1(\omega) \oplus \tilde{F}_1(\omega) = X$ is
given by the formula
\begin{equation}
\label{projection-formula}
\tilde{P}(\omega)u = u - \frac{\langle u, w^{*}(\omega)
\rangle}{\langle w(\omega), w^{*}(\omega) \rangle}w(\omega), \qquad
\omega \in \tilde{\Omega}_1 \cap \tilde{\Omega}^{*}_1.
\end{equation}
\end{lemma}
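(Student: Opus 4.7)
The plan is to verify the formula \eqref{projection-formula} directly, once we establish that $\langle w(\omega), w^{*}(\omega) \rangle > 0$ for every $\omega \in \tilde{\Omega}_1 \cap \tilde{\Omega}^{*}_1$, so that the denominator is nonzero and the decomposition $\tilde{E}_1(\omega) \oplus \tilde{F}_1(\omega) = X$ actually holds. I view the positivity of this pairing as the only nontrivial point; after that the argument reduces to checking the defining properties of the projection by a single direct substitution.

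First I would prove positivity of $\langle w(\omega), w^{*}(\omega) \rangle$. By Theorem \ref{theorem-w} we have $w(\omega) \in C_{\mathbf{e}}$ and by Theorem \ref{theorem-w-star} we have $w^{*}(\omega) \in C_{\mathbf{e}^{*}}$, so there exist $\underline{\alpha}(\omega), \underline{\alpha}^{*}(\omega) > 0$ with $w(\omega) \ge \underline{\alpha}(\omega) \mathbf{e}$ and $w^{*}(\omega) \ge \underline{\alpha}^{*}(\omega) \mathbf{e}^{*}$. Since $w^{*}(\omega) \in (X^{*})^{+}$ applied to the nonnegative element $w(\omega) - \underline{\alpha}(\omega)\mathbf{e} \in X^{+}$ gives
\begin{equation*}
\langle w(\omega), w^{*}(\omega)\rangle \ge \underline{\alpha}(\omega) \langle \mathbf{e}, w^{*}(\omega) \rangle \ge \underline{\alpha}(\omega) \underline{\alpha}^{*}(\omega) \langle \mathbf{e}, \mathbf{e}^{*}\rangle,
\end{equation*}
and this last quantity is strictly positive by (A4).

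Next, fix $\omega \in \tilde{\Omega}_1 \cap \tilde{\Omega}^{*}_1$ and denote by $Q(\omega)u$ the right-hand side of \eqref{projection-formula}. Applying $\langle \cdot, w^{*}(\omega)\rangle$ to $Q(\omega)u$ yields
\begin{equation*}
\langle Q(\omega)u, w^{*}(\omega) \rangle = \langle u, w^{*}(\omega)\rangle - \frac{\langle u, w^{*}(\omega)\rangle}{\langle w(\omega), w^{*}(\omega)\rangle} \langle w(\omega), w^{*}(\omega)\rangle = 0,
\end{equation*}
so $Q(\omega)u \in \tilde{F}_1(\omega)$ by the definition of $\tilde{F}_1(\omega)$. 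At the same time, $u - Q(\omega)u$ is a scalar multiple of $w(\omega)$, so it lies in $\tilde{E}_1(\omega)$. This exhibits the splitting $u = (u - Q(\omega)u) + Q(\omega)u \in \tilde{E}_1(\omega) + \tilde{F}_1(\omega)$. Together with the observation that $\tilde{E}_1(\omega) \cap \tilde{F}_1(\omega) = \{0\}$ (a nonzero element $cw(\omega)$ of $\tilde{E}_1(\omega)$ paired against $w^{*}(\omega)$ gives $c\langle w(\omega), w^{*}(\omega)\rangle \ne 0$), this shows $X = \tilde{E}_1(\omega) \oplus \tilde{F}_1(\omega)$, and that $Q(\omega)$ is the projection onto $\tilde{F}_1(\omega)$ along $\tilde{E}_1(\omega)$, i.e.\ $Q(\omega) = \tilde{P}(\omega)$. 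The main obstacle, if there is one, is only the positivity of the pairing, and it is handled entirely by the focusing hypothesis (A4) together with the membership relations $w(\omega) \sim \mathbf{e}$ and $w^{*}(\omega) \sim \mathbf{e}^{*}$.
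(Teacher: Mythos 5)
Your proof is correct and follows essentially the same verification-by-substitution strategy as the paper's (the paper's proof simply says ``after simple computation'' that the formula satisfies the defining properties of the projection). The one thing you do in addition is to establish up front that $\langle w(\omega), w^{*}(\omega)\rangle > 0$, which the paper defers to the immediately following Lemma~\ref{lemma-positive-inner-product}; that lemma gives the same conclusion via the sharper bound $\langle w(\omega), w^{*}(\omega)\rangle \ge \langle \mathbf{e},\mathbf{e}^{*}\rangle / \bigl(\varkappa(\theta_{-1}\omega)\varkappa^{*}(\theta_{1}\omega)\bigr) > 0$, exploiting the normalization $\norm{w(\omega)}=\norm{\mathbf e}=1$ to pin down the component constants, whereas you content yourself with the qualitative statement that suffices here. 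Your instinct that this nonvanishing is the only nontrivial point is right, and making it explicit before invoking the formula is a small improvement in exposition.
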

\begin{proof}
Fix $\omega \in \tilde{\Omega}_1 \cap \tilde{\Omega}^{*}_1$.  After
simple computation it is clear that $u \in \tilde{E}_1(\omega)$ if
and only~if $\tilde{P}(\omega)u = 0$, and that $\tilde{P}(\omega)u =
u$ if and only~if $u \in \tilde{F}(\omega)$.
\end{proof}

\begin{lemma}
\label{lemma-positive-inner-product}
For each $\omega \in \tilde{\Omega}_1 \cap \tilde{\Omega}_1^{*}$
there holds
\begin{equation*}
w(\omega) \ge \frac{\mathbf{e}}{\varkappa(\theta_{-1}\omega)} \qquad
\text{and} \qquad w^{*}(\omega) \ge
\frac{\mathbf{e}^{*}}{\varkappa^{*}(\theta_{1}\omega)}.
\end{equation*}
\end{lemma}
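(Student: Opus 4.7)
The plan is to apply the focusing axiom (A3) to the specific positive vector $w(\theta_{-1}\omega)$ and then divide by the resulting norm, using Theorem \ref{theorem-w}(1) to identify $w(\omega)$ as the normalized image under $U_{\theta_{-1}\omega}(1)$. The dual statement is symmetric, using Theorem \ref{theorem-w-star}(1) together with (A3)$^*$.

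More explicitly: fix $\omega \in \tilde{\Omega}_1 \cap \tilde{\Omega}_1^{*}$. By Theorem \ref{theorem-w}(1) applied with $\theta_{-1}\omega$ in place of $\omega$ and $t = 1$,
\begin{equation*}
w(\omega) \;=\; \mathcal{U}_{\theta_{-1}\omega}(1)\,w(\theta_{-1}\omega) \;=\; \frac{U_{\theta_{-1}\omega}(1)\,w(\theta_{-1}\omega)}{\norm{U_{\theta_{-1}\omega}(1)\,w(\theta_{-1}\omega)}}.
\end{equation*}
Since $w(\theta_{-1}\omega) \in X^{+}\setminus\{0\}$, axiom (A3) furnishes a scalar $\beta := \beta(\theta_{-1}\omega, w(\theta_{-1}\omega)) > 0$ with
\begin{equation*}
\beta\,\mathbf{e} \;\le\; U_{\theta_{-1}\omega}(1)\,w(\theta_{-1}\omega) \;\le\; \varkappa(\theta_{-1}\omega)\,\beta\,\mathbf{e}.
\end{equation*}
Now I would invoke (A0)(iii): the norm on $X$ is a lattice norm, hence monotonic, so taking norms across the two inequalities yields $\beta \le \norm{U_{\theta_{-1}\omega}(1) w(\theta_{-1}\omega)} \le \varkappa(\theta_{-1}\omega)\,\beta$. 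Dividing the lower inequality $\beta \mathbf{e} \le U_{\theta_{-1}\omega}(1)w(\theta_{-1}\omega)$ by $\norm{U_{\theta_{-1}\omega}(1)w(\theta_{-1}\omega)}$ gives the desired $w(\omega) \ge (\beta/\norm{\cdot})\mathbf{e} \ge \mathbf{e}/\varkappa(\theta_{-1}\omega)$.

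For the dual statement I would repeat the argument verbatim in the dual setting. By Theorem \ref{theorem-w-star}(1) applied with $\theta_{1}\omega$ in place of $\omega$ and $t = 1$, one has $w^{*}(\omega) = U^{*}_{\theta_{1}\omega}(1)w^{*}(\theta_{1}\omega)/\norm{U^{*}_{\theta_{1}\omega}(1)w^{*}(\theta_{1}\omega)}$. Applying (A3)$^{*}$ to $w^{*}(\theta_{1}\omega) \in (X^{*})^{+}\setminus\{0\}$ and using that (A0)$^{*}$(iii) makes the dual norm monotonic yields the analogous bound $w^{*}(\omega) \ge \mathbf{e}^{*}/\varkappa^{*}(\theta_{1}\omega)$.

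There is no real obstacle here; the only point requiring care is that one may only take norms of the focusing inequality because the Banach lattice assumptions (A0)(iii) and (A0)$^{*}$(iii), inherited from (A4)'s context, guarantee monotonic norms on $X$ and $X^{*}$. Everything else is a direct consequence of the already-established cocycle identities for $w$ and $w^{*}$ combined with the focusing bounds.
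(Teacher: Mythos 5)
Your proof is correct and follows essentially the same route as the paper's: both use the cocycle identity $w(\omega)=\mathcal{U}_{\theta_{-1}\omega}(1)w(\theta_{-1}\omega)$ from Theorem \ref{theorem-w}(1), the focusing sandwich from (A3) applied to $w(\theta_{-1}\omega)$, and the monotonicity of the norm to extract the scalar bound. The paper divides by the norm first and then exploits $\norm{w(\omega)}=\norm{\mathbf{e}}=1$, whereas you take norms of the sandwich before dividing, but the two calculations are identical in substance.
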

\begin{proof}
By (A3) and (A3)$^*$, for each $\omega \in \tilde{\Omega}_1 \cap
\tilde{\Omega}_1^{*}$ there are $\tilde{\beta}, \tilde{\beta}^{*} >
0$ such that
\begin{equation*}
\tilde{\beta} \mathbf{e} \le w(\omega) \le \varkappa(\theta_{-1}\omega)
\tilde{\beta} \mathbf{e} \qquad \text{and} \qquad \tilde{\beta}^{*}
\mathbf{e}^{*} \le w^{*}(\omega) \le \varkappa^{*}(\theta_{1}\omega)
\tilde{\beta}^{*} \mathbf{e}^{*}.
\end{equation*}
Since $\norm{w(\omega)} = \norm{w^{*}(\omega)} = \norm{\mathbf{e}} =
\norm{\mathbf{e}^{*}} = 1$, we have $\tilde{\beta} \le 1 \le
\tilde{\beta} \varkappa(\theta_{-1}\omega)$ and $\tilde{\beta}^{*}
\le 1 \le \tilde{\beta}^{*} \varkappa^{*}(\theta_{1}\omega)$,
 hence
\begin{equation*}
w(\omega) \ge \frac{\mathbf{e}}{\varkappa(\theta_{-1}\omega)} \qquad
\text{and} \qquad w^{*}(\omega) \ge
\frac{\mathbf{e}^{*}}{\varkappa^{*}(\theta_{1}\omega)}.
\end{equation*}
\end{proof}

For $\omega \in \tilde{\Omega}^{*}_1$ we define
\begin{align*}
W^+(\omega) & := \{\, u \in X: \langle u, w^{*}(\omega) \rangle
> \,0\},\\
W^-(\omega) & := \{\,u \in X: \langle u,w^{*}(\omega) \rangle <
0 \,\}.
\end{align*}
Observe that, by~\eqref{dual-definition},
\begin{equation*}
\langle U_{\omega}(t)u, v^{*} \rangle = \langle u,
U^{*}_{\theta_{t}\omega}(t)v^{*} \rangle
\end{equation*}
for each $t \ge 0$.  Hence $U_\omega(t)W^+(\omega) \subset
W^+(\theta_{t}\omega)$ and $U_\omega(t)W^-(\omega) \subset
W^-(\theta_{t}\omega)$.
\begin{lemma}
\label{lemma-cone-in-W-plus}
For each $\omega \in \tilde{\Omega}_1 \cap \tilde{\Omega}_1^{*}$
there holds $X^{+} \setminus \{0\} \subset W^{+}(\omega)$.
\end{lemma}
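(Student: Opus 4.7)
The plan is to show that for any $u \in X^{+} \setminus \{0\}$ and any $\omega \in \tilde{\Omega}_1 \cap \tilde{\Omega}_1^{*}$, the scalar $\langle u, w^{*}(\omega)\rangle$ is strictly positive. The strategy is to propagate $u$ forward one time-step via $U_{\omega}(1)$, which by the focusing hypothesis (A3) lifts any nonzero positive vector above a positive multiple of $\mathbf{e}$. Pairing this with the ``backwards-shifted'' dual vector $w^{*}(\theta_{1}\omega)$ reduces the problem to the inequality $\langle \mathbf{e}, \mathbf{e}^{*}\rangle > 0$ built into (A4).

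Concretely, I would first invoke the invariance of $w^{*}$ from Theorem~\ref{theorem-w-star}(1), applied at $\theta_{1}\omega$ in place of $\omega$ and $t = 1$: since $\theta_{-1}(\theta_{1}\omega) = \omega$, one obtains
\begin{equation*}
U^{*}_{\theta_{1}\omega}(1) w^{*}(\theta_{1}\omega) = c\, w^{*}(\omega), \qquad c := \norm{U^{*}_{\theta_{1}\omega}(1) w^{*}(\theta_{1}\omega)} > 0
\end{equation*}
(positivity of $c$ follows from (A3)$^{*}$, which forces $U^{*}_{\theta_{1}\omega}(1) w^{*}(\theta_{1}\omega) \ge \beta^{*}(\theta_{1}\omega, w^{*}(\theta_{1}\omega))\,\mathbf{e}^{*}$). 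Combining this with the defining duality \eqref{dual-definition} for $U^{*}$, one gets
\begin{equation*}
c\,\langle u, w^{*}(\omega)\rangle = \langle u, U^{*}_{\theta_{1}\omega}(1) w^{*}(\theta_{1}\omega)\rangle = \langle U_{\omega}(1) u,\, w^{*}(\theta_{1}\omega)\rangle,
\end{equation*}
so it suffices to prove the right-hand side is strictly positive.

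For this last step I would estimate the two entries of the pairing separately. On one side, (A3) yields $\beta(\omega, u) > 0$ with $U_{\omega}(1) u \ge \beta(\omega, u)\,\mathbf{e}$. On the other side, Lemma~\ref{lemma-positive-inner-product} applied at $\theta_{1}\omega$, which lies in $\tilde{\Omega}_1 \cap \tilde{\Omega}_1^{*}$ by invariance of that set, gives $w^{*}(\theta_{1}\omega) \ge \mathbf{e}^{*}/\varkappa^{*}(\theta_{2}\omega)$. Pairing these two cone-inequalities, using $w^{*}(\theta_{1}\omega) \in (X^{*})^{+}$ and $\mathbf{e} \in X^{+}$, yields
\begin{equation*}
\langle U_{\omega}(1) u,\, w^{*}(\theta_{1}\omega)\rangle \;\ge\; \frac{\beta(\omega, u)}{\varkappa^{*}(\theta_{2}\omega)}\, \langle \mathbf{e}, \mathbf{e}^{*}\rangle,
\end{equation*}
which is strictly positive by the assumption $\langle \mathbf{e}, \mathbf{e}^{*}\rangle > 0$ contained in (A4). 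There is no real obstacle here; the only care required is bookkeeping of the argument of each focusing factor $\varkappa$, $\varkappa^{*}$ and of the invariance of $\tilde{\Omega}_1 \cap \tilde{\Omega}_1^{*}$ under all the shifts involved, both of which are automatic from Theorems~\ref{theorem-w} and~\ref{theorem-w-star}.
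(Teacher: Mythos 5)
Your proposal is correct and takes essentially the same route as the paper: invoke Theorem~\ref{theorem-w-star}(1) at $\theta_1\omega$ to express $w^*(\omega)$ via $U^*_{\theta_1\omega}(1)w^*(\theta_1\omega)$, transfer to the primal side by~\eqref{dual-definition}, then bound $U_\omega(1)u$ below by a multiple of $\mathbf{e}$ via (A3), bound $w^*(\theta_1\omega)$ below by a multiple of $\mathbf{e}^*$ via Lemma~\ref{lemma-positive-inner-product}, and conclude with $\langle\mathbf{e},\mathbf{e}^*\rangle>0$ from (A4). The only cosmetic difference is that the paper identifies your normalizing constant $c$ as $\rho_1^*(\theta_1\omega)$, whereas you justify its positivity directly from (A3)$^*$; both are fine.
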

\begin{proof}
Let $\omega \in \tilde{\Omega}_1 \cap \tilde{\Omega}_1^{*}$ and let
$u \in X^{+} \setminus \{0\}$.  We have
\begin{equation*}
\begin{aligned}
\langle u, w^{*}(\omega) \rangle  = {} &
\frac{1}{\norm{U^{*}_{\theta_{1}\omega}(1)w^{*}(\theta_{1}\omega)}}
\langle u, U^{*}_{\theta_{1}\omega}(1)w^{*}(\theta_{1}\omega) \rangle
& \text{by Theorem~\ref{theorem-w-star}(1)} \\
= {} & \frac{1}{\rho_1^{*}(\theta_{1}\omega)} \langle u,
U^{*}_{\theta_{1}\omega}(1)w^{*}(\theta_{1}\omega) \rangle & \text{by
the definition of $\rho_1^{*}$} \\
= {} & \frac{1}{\rho_1^{*}(\theta_{1}\omega)} \langle U_{\omega}(1)u,
w^{*}(\theta_{1}\omega) \rangle & \text{by~\eqref{dual-definition}} \\
\ge {} & \frac{\beta(\omega,u)}{\rho_1^{*}(\theta_{1}\omega)}
\langle \mathbf{e}, w^{*}(\theta_{1}\omega) \rangle & \text{by (A3)} \\
\ge {} & \frac{\beta(\omega,u)}
{\varkappa^{*}(\theta_{2}\omega)\,\rho_1^{*}(\theta_{1}\omega)} \langle
\mathbf{e}, \mathbf{e}^{*} \rangle & \text{by
Lemma~\ref{lemma-positive-inner-product}} \\
> {} & 0. & \text{by (A4)}
\end{aligned}
\end{equation*}
\end{proof}

\begin{proposition}
\label{tempered}
The function
\begin{equation*}
[\, \tilde{\Omega}_1 \cap \tilde{\Omega}_1^{*} \ni \omega \mapsto
\ln{\langle w(\omega), w^{*}(\omega) \rangle} \in (-\infty,0] \,]
\end{equation*}
belongs to $L_1(\OFP)$.
\end{proposition}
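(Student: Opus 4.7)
The key observation is that both $w(\omega)$ and $w^{*}(\omega)$ are pointwise bounded below by scalar multiples of $\mathbf{e}$ and $\mathbf{e}^{*}$ respectively, with scalars whose logarithms are in $L_1(\OFP)$ by assumption (A4). Combined with $\langle \mathbf{e}, \mathbf{e}^{*}\rangle > 0$ (also from (A4)), this yields a pointwise integrable lower bound on $\langle w(\omega), w^{*}(\omega)\rangle$ in log scale.

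First I would dispatch the easy parts. Measurability of the function $\omega \mapsto \ln\langle w(\omega), w^{*}(\omega)\rangle$ is immediate from the measurability of $w$ and $w^{*}$ (Theorems~\ref{theorem-w} and \ref{theorem-w-star}) together with continuity of the duality pairing. Positivity of the pairing on $\tilde{\Omega}_1 \cap \tilde{\Omega}_1^{*}$, so that the logarithm is a well-defined real number, was established in Lemma~\ref{lemma-cone-in-W-plus} (take $u = w(\omega) \in X^{+}\setminus\{0\}$). Since $\norm{w(\omega)} = \norm{w^{*}(\omega)} = 1$, we have $\langle w(\omega), w^{*}(\omega)\rangle \le 1$, so $\ln\langle w, w^{*}\rangle \le 0$ pointwise; therefore it suffices to dominate $-\ln\langle w(\omega), w^{*}(\omega)\rangle \ge 0$ by an element of $L_1(\OFP)$.

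The heart of the argument is to apply Lemma~\ref{lemma-positive-inner-product}, which supplies
\begin{equation*}
w(\omega) \,\ge\, \frac{\mathbf{e}}{\varkappa(\theta_{-1}\omega)} \quad \text{in } X, \qquad w^{*}(\omega) \,\ge\, \frac{\mathbf{e}^{*}}{\varkappa^{*}(\theta_{1}\omega)} \quad \text{in } X^{*}.
\end{equation*}
Since the duality pairing is monotone in each argument on $X^{+} \times (X^{*})^{+}$, I chain these two inequalities using $w^{*}(\omega) \in (X^{*})^{+}$ and $\mathbf{e} \in X^{+}$:
\begin{equation*}
\langle w(\omega), w^{*}(\omega)\rangle \,\ge\, \frac{\langle \mathbf{e}, w^{*}(\omega)\rangle}{\varkappa(\theta_{-1}\omega)} \,\ge\, \frac{\langle \mathbf{e}, \mathbf{e}^{*}\rangle}{\varkappa(\theta_{-1}\omega)\,\varkappa^{*}(\theta_{1}\omega)}.
\end{equation*}
Taking logarithms and invoking $\langle \mathbf{e}, \mathbf{e}^{*}\rangle > 0$ from (A4) produces
\begin{equation*}
0 \,\le\, -\ln\langle w(\omega), w^{*}(\omega)\rangle \,\le\, \ln\varkappa(\theta_{-1}\omega) + \ln\varkappa^{*}(\theta_{1}\omega) - \ln\langle \mathbf{e}, \mathbf{e}^{*}\rangle.
\end{equation*}

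To finish, (A4) gives $\ln\varkappa,\ln\varkappa^{*} \in L_1(\OFP)$, and since $\PP$ is preserved by $\theta_{\pm 1}$, the compositions $\ln\varkappa \circ \theta_{-1}$ and $\ln\varkappa^{*} \circ \theta_{1}$ remain in $L_1(\OFP)$; the last summand is merely a finite constant. Thus $-\ln\langle w,w^{*}\rangle$ is dominated by an integrable function, which combined with the sign information $\ln\langle w,w^{*}\rangle \le 0$ gives $\ln\langle w,w^{*}\rangle \in L_1(\OFP)$. There is no real obstacle: once the cone-lower-bound step is spotted the remainder is just invariance of $\PP$ and (A4).
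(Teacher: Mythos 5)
Your proof is correct and follows essentially the same route as the paper: you invoke Lemma~\ref{lemma-positive-inner-product}, chain the two cone lower bounds through the monotonicity of the duality pairing to obtain $\langle \mathbf{e},\mathbf{e}^{*}\rangle/(\varkappa(\theta_{-1}\omega)\varkappa^{*}(\theta_{1}\omega)) \le \langle w(\omega),w^{*}(\omega)\rangle \le 1$, and then finish with (A4) and measure invariance. The only cosmetic difference is that you spell out the measurability, positivity, and $\theta$-invariance steps that the paper leaves implicit.
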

\begin{proof}
By Lemma~\ref{lemma-positive-inner-product},
\begin{equation*}
0 < \frac{\langle \mathbf{e}, \mathbf{e}^{*}
\rangle}{\varkappa(\theta_{-1}\omega) \varkappa^{*}(\theta_{1}\omega)} \le
\langle w(\omega), w^{*}(\omega) \rangle \le 1,
\end{equation*}
which implies that
\begin{equation*}
\ln{\langle \mathbf{e}, \mathbf{e}^{*} \rangle} -
\ln{\varkappa(\theta_{-1}\omega)} - \ln{\varkappa^{*}(\theta_{1}\omega)}
\le \ln{\langle w(\omega), w^{*}(\omega) \rangle} \le 0
\end{equation*}
for all $\omega \in \tilde{\Omega}_1 \cap \tilde{\Omega}_1^{*}$.  It
suffices to apply the fact that both $\ln{\varkappa}$ and
$\ln{\varkappa^{*}}$ belong to $L_1(\OFP)$.
\end{proof}

\begin{proposition}
\label{prop-oscillation-estimate}
There exists an invariant $\tilde{\Omega}_2 \subset \tilde{\Omega}_1
\cap \tilde{\Omega}_1^{*}$, $\PP(\tilde{\Omega}_2) = 1$, with the
property that for each $J$, $\int_{\Omega} \ln{p} \, d\PP < J < 0$,
and each $\omega \in \tilde{\Omega}_2$ there is $C_3(\omega, J)
> 0$ such that
\begin{equation*}
\osc\Bigl(\tfrac{U_{\omega}(t)u}{\rho_t(\omega)}/
w(\theta_{t}\omega)\Bigr) \le C_3(J, \omega) e^{Jt}
\end{equation*}
for all $u \in \mathcal{S}_1(X^{+})$ and all $t \ge 1$, $t \in
\TT^{+}$.
\end{proposition}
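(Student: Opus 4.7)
First, using the identity $w(\theta_{t}\omega) = U_{\omega}(t)w(\omega)/\rho_{t}(\omega)$ from Theorem \ref{theorem-w}(1) together with the scaling rule Lemma \ref{lemma-projective-distance}(5), I would reduce the target inequality to the equivalent
\begin{equation*}
\osc(U_{\omega}(t)u/U_{\omega}(t)w(\omega)) \le C_{3}(J,\omega)\, e^{Jt}.
\end{equation*}
This eliminates the normalizing factor $\rho_{t}(\omega)$ and lets me work with both arguments as images under the single operator $U_{\omega}(t)$.

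The engine of the estimate is the Birkhoff contraction ratio $p$. By the definition of $p$ in Definition \ref{oscillation-ratio-def}(1) combined with the cocycle identity \eqref{eq-cocycle}, iterating one step at a time gives, for every integer $n \ge 1$,
\begin{equation*}
\osc(U_{\omega}(n)u/U_{\omega}(n)w(\omega)) \le p(\theta_{n-1}\omega) \cdots p(\theta_{1}\omega) \cdot \osc(U_{\omega}(1)u/U_{\omega}(1)w(\omega)).
\end{equation*}
To bound the initial oscillation \emph{uniformly} in $u \in \mathcal{S}_{1}(X^{+})$, I would use the focusing assumption (A3) to sandwich both $U_{\omega}(1)u$ and $U_{\omega}(1)w(\omega)$ between $\beta$-multiples of $\mathbf{e}$, obtaining $\osc(U_{\omega}(1)u/U_{\omega}(1)w(\omega)) \le \varkappa(\omega)\beta(\omega,u)/\beta(\omega,w(\omega))$. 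The monotonic norm supplied by (A0)(iii) (which forces (A0)(ii)), applied to $\beta(\omega,u)\mathbf{e} \le U_{\omega}(1)u$ with $\norm{\mathbf{e}} = \norm{u} = 1$, then yields $\beta(\omega,u) \le \norm{U_{\omega}(1)}$, so the initial oscillation is dominated by the finite $\omega$-only function $K(\omega) := \varkappa(\omega)\norm{U_{\omega}(1)}/\beta(\omega,w(\omega))$.

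I would then apply the Birkhoff Ergodic Theorem (Theorem \ref{Birkhoff-thm}) to the $(\mathfrak{F},\mathfrak{B}(\RR))$\nobreakdash-\hspace{0pt}measurable function $\ln p$: measurability is supplied by Lemma \ref{lemma-measurability}, and $(\ln p)^{+} \equiv 0$ since $p(\omega) < 1$ for every $\omega$ by Lemma \ref{lm-tau-finite}, so the integrability hypothesis is automatic. Setting $I := \int_{\Omega} \ln p \, d\PP \in [-\infty,0)$, there is an invariant $\tilde{\Omega}_{2}$ of full $\PP$\nobreakdash-\hspace{0pt}measure on which $\frac{1}{n}\sum_{i=1}^{n-1} \ln p(\theta_{i}\omega) \to I$; for any $J \in (I,0)$ and $\omega \in \tilde{\Omega}_{2}$ one then gets $\prod_{i=1}^{n-1} p(\theta_{i}\omega) \le e^{nJ}$ for all $n \ge N(J,\omega)$, with the finitely many earlier $n$ absorbed into $C_{3}(J,\omega)$. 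For non-integer $t \in [n,n+1)$ with $n \ge 1$, Lemma \ref{lm-projective-diameter-nonincreasing} (oscillation is non-increasing along $U_{\omega}(\cdot)$) interpolates, yielding the full continuous-time bound; in the flow case I would, exactly as in the proof of Proposition \ref{prop-auxiliary-q}, first construct $\tilde{\Omega}_{2}$ for the discrete skeleton $(\theta_{n})_{n\in\ZZ}$ and then saturate under $(\theta_{T})_{T\in[0,1)}$, using completeness of $\PP$ for measurability.

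The main technical obstacle is producing the uniform-in-$u$ bound on $\osc(U_{\omega}(1)u/U_{\omega}(1)w(\omega))$; once that is in hand, the exponential decay follows immediately by iterating the Birkhoff contraction ratio and invoking Birkhoff's theorem for $\ln p$, which is structurally the same argument already used in Proposition \ref{prop-auxiliary-q}, with $p$ in place of $q$.
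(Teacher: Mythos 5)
Your proof is correct and follows essentially the same route as the paper's: reduce to $\osc(U_{\omega}(t)u/U_{\omega}(t)w(\omega))$, iterate the oscillation ratio $p$ along the cocycle, obtain a $u$\nobreakdash-\hspace{0pt}uniform bound on the time\nobreakdash-\hspace{0pt}one oscillation from (A3) together with the monotone norm, and then run Birkhoff on $\ln p$ as in Proposition~\ref{prop-auxiliary-q}, with Lemma~\ref{lm-projective-diameter-nonincreasing} handling non-integer $t$. (Two cosmetic points: $p$ as defined in Definition~\ref{oscillation-ratio-def}(1) is the \emph{oscillation} ratio, not the Birkhoff contraction ratio $q$ — though by Lemma~\ref{lm-tau-finite} they agree under (A3); and the paper goes one step further, replacing $1/\beta(\omega,w(\omega))$ by $\varkappa(\omega)/\rho_1(\omega)$ to write the initial bound as $(\varkappa^2(\omega)-1)\norm{U_\omega(1)}/\rho_1(\omega)$, but your $\omega$\nobreakdash-\hspace{0pt}only bound $K(\omega)$ serves the same purpose.)
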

\begin{proof}
By (A3),
\begin{equation*}
\beta(\omega,u) \mathbf{e} \le U_{\omega}(1)u \le \varkappa(\omega)
\beta(\omega,u) \mathbf{e}
\end{equation*}
and
\begin{equation*}
\beta(\omega,w(\omega)) \mathbf{e} \le U_{\omega}(1) w(\omega) \le
\varkappa(\omega) \beta(\omega,w(\omega)) \mathbf{e},
\end{equation*}
consequently
\begin{equation*}
\osc(U_{\omega}(1)u/U_{\omega}(1)w(\omega)) \le \left( \varkappa(\omega)
- \frac{1}{\varkappa(\omega)} \right)
\frac{\beta(\omega,u)}{\beta(\omega,w(\omega))}
\end{equation*}
for each nonzero $u \in X^{+}$.  Observe that $\beta(\omega, u)
\mathbf{e} \le U_{\omega}(1)u$ and $U_{\omega}(1)w(\omega) \le
\varkappa(\omega) \beta(\omega, w(\omega)) \mathbf{e}$, which implies
that $\beta(\omega, u) \le \norm{U_{\omega}(1)u} \le
\norm{U_{\omega}(1)}$ for all $u \in \mathcal{S}_1(X^{+})$, and
$\rho_1(\omega) \le \varkappa(\omega) \beta(\omega, w(\omega))$.
 Therefore
\begin{equation*}
\osc(U_{\omega}(1)u/U_{\omega}(1)w(\omega)) \le (\varkappa^{2}(\omega) -
1) \frac{\norm{U_{\omega}(1)}}{\rho_1(\omega)}
\end{equation*}
for all $u \in \mathcal{S}_1(X^{+})$.  The remainder of the proof
goes, with the help of
Lemma~\ref{lm-projective-diameter-nonincreasing} and the equality
\begin{equation*}
\osc\Bigl(\tfrac{U_{\omega}(t)u}{\rho_t(\omega)}/
w(\theta_{t}\omega)\Bigr) =
\osc(U_{\omega}(t)u/U_{\omega}(t)w(\omega)) \qquad u \in
\mathcal{S}_1(X^{+}), \ t \ge 1, \ t \in \TT^{+},
\end{equation*}
along the lines of the proof of Proposition \ref{prop-auxiliary-q}.
\end{proof}

\begin{proposition}
\label{w-w-star-prop}
There is a $\tilde{\sigma}_2 > 0$ such that for each $\omega \in
\tilde{\Omega}_2$ $(\tilde\Omega_2$ is as in Proposition
\ref{prop-oscillation-estimate}$)$ there holds
\begin{equation*}
\limsup_{\substack{t\to\infty \\ t \in \TT^{+}}} \frac{1}{t}
\ln{\sup{\left\{ \left\lVert
\frac{U_{\omega}(t)u}{\rho_{t}(\omega)} - \frac{\langle u,
w^{*}(\omega) \rangle}{\langle w(\omega), w^{*}(\omega) \rangle}
w(\theta_{t}\omega)  \right\rVert: u \in X, \ \norm{u} = 1
\right\}}} \le -\tilde{\sigma}_2.
\end{equation*}
\end{proposition}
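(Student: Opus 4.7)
The plan is to reduce the estimate, via the projection formula of Lemma~\ref{projection-formula-lemma}, to the oscillation decay of Proposition~\ref{prop-oscillation-estimate}, and then to control the remaining scalar ambiguity by testing against $w^{*}$. Setting $v := \tilde{P}(\omega)u$ and noting that $U_{\omega}(t) w(\omega) = \rho_t(\omega)\, w(\theta_t\omega)$ by Theorem~\ref{theorem-w}(1), a direct computation using Lemma~\ref{projection-formula-lemma} shows the vector inside the norm is precisely $U_{\omega}(t) v/\rho_t(\omega)$. Moreover $v \in \tilde{F}_1(\omega)$ and $\norm{v} \le \norm{\tilde{P}(\omega)} \le 1 + 1/\langle w(\omega), w^{*}(\omega)\rangle$, a finite constant depending only on $\omega$ (Lemma~\ref{lemma-cone-in-W-plus} guarantees the denominator is positive).

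Next I would exploit the Banach lattice assumption (A0)(iii) to split $v = v^{+} - v^{-}$ with $v^{\pm} \in X^{+}$, $\norm{v^{\pm}} \le \norm{v}$. Applying Proposition~\ref{prop-oscillation-estimate} to $v^{\pm}$, combined with the scaling rule of Lemma~\ref{lemma-projective-distance}(5), yields for any $J$ with $\int_{\Omega}\ln p\,d\PP < J < 0$ and all $t \ge 1$,
\begin{equation*}
\osc\bigl(U_{\omega}(t) v^{\pm}/U_{\omega}(t) w(\omega)\bigr) \le C_3(J,\omega)\,\norm{v^{\pm}}\,e^{Jt}.
\end{equation*}
Writing $m_t^{\pm} := m(U_{\omega}(t) v^{\pm}/U_{\omega}(t) w(\omega))$, the chain $0 \le U_{\omega}(t)v^{\pm} - m_t^{\pm} U_{\omega}(t) w(\omega) \le \osc \cdot U_{\omega}(t) w(\omega)$ and monotonicity of the lattice norm combine to give $\norm{U_{\omega}(t) v^{\pm}/\rho_t(\omega) - m_t^{\pm}\, w(\theta_t\omega)} \le C_3(J,\omega)\,\norm{v^{\pm}}\,e^{Jt}$. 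Subtracting these two estimates yields, uniformly in $\norm{u} \le 1$,
\begin{equation*}
\frac{U_{\omega}(t) v}{\rho_t(\omega)} = (m_t^{+} - m_t^{-})\, w(\theta_t\omega) + r_t(\omega,u), \quad \norm{r_t(\omega,u)} \le 2\,C_3(J,\omega)\,\norm{\tilde{P}(\omega)}\,e^{Jt}.
\end{equation*}

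The main obstacle is that the scalar $m_t^{+} - m_t^{-}$ is not controlled by the oscillation estimate itself. To handle it, I would use the invariance $U_{\omega}(t)\tilde{F}_1(\omega) \subset \tilde{F}_1(\theta_t\omega)$: by Theorem~\ref{theorem-w-star}(1) applied at $\theta_t\omega$ we have $U^{*}_{\theta_t\omega}(t)w^{*}(\theta_t\omega) = \rho^{*}_t(\theta_t\omega)\,w^{*}(\omega)$, so
\begin{equation*}
\langle U_{\omega}(t) v, w^{*}(\theta_t\omega)\rangle = \langle v, U^{*}_{\theta_t\omega}(t)w^{*}(\theta_t\omega)\rangle = \rho^{*}_t(\theta_t\omega)\,\langle v, w^{*}(\omega)\rangle = 0.
\end{equation*}
Pairing the approximate identity with $w^{*}(\theta_t\omega)$ then yields $\abs{m_t^{+} - m_t^{-}} \le \norm{r_t(\omega,u)}/\langle w(\theta_t\omega), w^{*}(\theta_t\omega)\rangle$, whence the quantity in the statement is bounded above by $\norm{r_t(\omega,u)}\bigl(1 + 1/\langle w(\theta_t\omega), w^{*}(\theta_t\omega)\rangle\bigr)$. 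Finally, Proposition~\ref{tempered} shows $\ln\langle w(\cdot), w^{*}(\cdot)\rangle \in L_1(\OFP)$, so Lemma~\ref{lemma-zero} furnishes an invariant full-measure set $\tilde{\Omega}_2 \subset \tilde{\Omega}_1 \cap \tilde{\Omega}_1^{*}$ on which $(1/t)\ln\langle w(\theta_t\omega), w^{*}(\theta_t\omega)\rangle \to 0$. Taking $\limsup_{t\to\infty}(1/t)\ln$ of the bound and letting $J \searrow \int_{\Omega}\ln p\,d\PP$ completes the proof with $\tilde{\sigma}_2 := -\int_{\Omega}\ln p\,d\PP > 0$.
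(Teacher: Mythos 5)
Your argument is correct and differs from the paper's in a pleasant organizational way.  The paper proves, for $u \in \mathcal{S}_1(X^+)$, that $m(\tilde U_\omega(t)u/w(\theta_t\omega))$ and $M(\tilde U_\omega(t)u/w(\theta_t\omega))$ converge monotonically to a common limit $\mu(u,\omega)$, shows $\norm{\tilde U_\omega(t)u-\mu(u,\omega)w(\theta_t\omega)}\le C_3 e^{Jt}$, and only then identifies $\mu(u,\omega) = \langle u,w^*(\omega)\rangle/\langle w(\omega),w^*(\omega)\rangle$ by pairing with $w^*(\theta_n\omega)$ and invoking the tempered estimate from Proposition~\ref{tempered}; the passage to general $u\in X$ via $u=u^+-u^-$ happens at the end.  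You instead project to $v=\tilde P(\omega)u\in\tilde F_1(\omega)$ at the outset (using Lemma~\ref{projection-formula-lemma} and Theorem~\ref{theorem-w}(1) to rewrite the target vector as $U_\omega(t)v/\rho_t(\omega)$), decompose $v = v^+-v^-$, and obtain the approximate identity $\tilde U_\omega(t)v=(m_t^+-m_t^-)w(\theta_t\omega)+r_t$; then, since $v\in\tilde F_1(\omega)$ kills the coefficient under pairing, the orthogonality $\langle U_\omega(t)v,w^*(\theta_t\omega)\rangle=0$ directly bounds $|m_t^+-m_t^-|$ by $\norm{r_t}/\langle w(\theta_t\omega),w^*(\theta_t\omega)\rangle$.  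This sidesteps the explicit identification of a limit $\mu$: you never need the Hilbert coordinates to converge, only the one-step bound.  The ingredients — Proposition~\ref{prop-oscillation-estimate}, the monotone/lattice norm, the scaling rule of Lemma~\ref{lemma-projective-distance}(5), the dual cocycle relation from Theorem~\ref{theorem-w-star}(1), and Proposition~\ref{tempered} combined with Lemma~\ref{lemma-zero} — are the same as the paper's; only the order of operations differs.  Two small housekeeping points:  you implicitly need $v^\pm\ne 0$ to normalize before applying Proposition~\ref{prop-oscillation-estimate} (if $v\ne 0$ this follows from Lemma~\ref{lemma-cone-in-W-plus}, since $v^+=0$ would force $v^-\in X^+\setminus\{0\}$ with $\langle v^-,w^*(\omega)\rangle=0$; and if $v=0$ the estimate is trivial), and the invariant full-measure set you produce at the end should be intersected with the $\tilde\Omega_2$ already fixed in Proposition~\ref{prop-oscillation-estimate} rather than redefined — but this is harmless since both sets are invariant and of full measure.
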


\begin{proof} Denote $\tilde{U}_{\omega}(t)u :=
\tfrac{U_{\omega}(t)u}{\rho_t(\omega)}$.  By Proposition
\ref{prop-oscillation-estimate}, there exists an invariant
$\tilde{\Omega}_2 \subset \tilde{\Omega}_1 \cap
\tilde{\Omega}_1^{*}$, $\PP(\tilde{\Omega}_2) = 1$, such that for any
$J \in (I, 0)$, any $\omega \in \tilde{\Omega}_2$ there is $C_3(J,
\omega) > 0$ such that
\begin{equation*}
\osc(\tilde{U}_{\omega}(t)u/w(\theta_{t}\omega)) \le C_3(J, \omega)
e^{Jt}
\end{equation*}
for all $t \in \TT^{+}$, $t \ge 1$, and all $u \in
\mathcal{S}_1(X^{+})$.  Since
\begin{equation*}
\osc(\tilde{U}_{\omega}(t)u/w(\theta_{t}\omega)) =
M(\tilde{U}_{\omega}(t)u/w(\theta_{t}\omega)) -
m(\tilde{U}_{\omega}(t)u/w(\theta_{t}\omega)),
\end{equation*}
it follows via Lemma~\ref{lm-projective-diameter-nonincreasing} that
$m(\tilde{U}_{\omega}(t)u/w(\theta_{t}\omega))$ converges in a
nondecreasing way, as $t \to \infty$, and
$M(\tilde{U}_{\omega}(t)u/w(\theta_{t}\omega))$ converges in a
nonincreasing way, as $t \to \infty$, to a common limit (denoted by
$\mu(u,\omega)$).  Further, we have
\begin{align*}
\mu(u,\omega) - m(\tilde{U}_{\omega}(t)u/w(\theta_{t}\omega)) & {}
\le C_3(J, \omega) e^{Jt}, \\
M(\tilde{U}_{\omega}(t)u/w(\theta_{t}\omega)) - \mu(u,\omega)  & {}
\le C_3(J, \omega) e^{Jt}
\end{align*}
for all $t \in \TT^{+}$, $t \ge 1$, and all $u \in
\mathcal{S}_1(X^{+})$.

As
\begin{align*}
(m(\tilde{U}_{\omega}(t)u/w(\theta_{t}\omega)) - \mu(u,\omega))
w(\theta_{t}\omega) &
\le \tilde{U}_{\omega}(t)u - \mu(u,\omega) w(\theta_{t}\omega) \\
&\le (M(\tilde{U}_{\omega}(t)u/w(\theta_{t}\omega)) - \mu(u,\omega))
w(\theta_{t}\omega),
\end{align*}
there holds
\begin{equation*}
\left\lVert \frac{U_{\omega}(t)u}{\rho_{t}(\omega)} - \mu(u,\omega)
w(\theta_{t}\omega) \right\rVert \le C_3(J, \omega) e^{Jt}
\end{equation*}
for all $t \in \TT^{+}$, $t \ge 1$, and all $u \in
\mathcal{S}_1(X^{+})$.

Fix $u \in \mathcal{S}_1(X^{+})$ and $\omega \in \tilde{\Omega}_2$.
We apply the functionals $w^{*}(\theta_{n}\omega)$ to the
exponentially decaying sequence (in $X$)
\begin{equation*}
\frac{U_{\omega}(n)u}{\rho_{n}(\omega)} - \mu(u,\omega)
w(\theta_{n}\omega)
\end{equation*}
to obtain an exponentially decaying sequence (in $\RR$)
\begin{equation}
\label{exponential-decay}
\begin{aligned}
& \qquad \left\langle \frac{U_{\omega}(n)u}{\rho_{n}(\omega)} -
\mu(u,\omega) w(\theta_{n}\omega), w^{*}(\theta_{n}\omega)
\right\rangle \\
& = \frac{\langle U_{\omega}(n)u, w^{*}(\theta_{n}\omega)
\rangle}{\rho_{n}(\omega)} - \mu(u,\omega) \langle
w(\theta_{n}\omega), w^{*}(\theta_{n}\omega) \rangle.
\end{aligned}
\end{equation}
Observe that
\begin{equation*}
\langle U_{\omega}(n)u, w^{*}(\theta_{n}\omega) \rangle = \langle u,
U^{*}_{\theta_{n}\omega}(n) w^{*}(\theta_{n}\omega) \rangle =
\rho^{*}_n(\theta_{n}\omega) \langle u, w^{*}(\omega) \rangle
\end{equation*}
and
\begin{equation*}
\begin{aligned}
\rho_n(\omega) \langle w(\theta_{n}\omega), w^{*}(\theta_{n}\omega)
\rangle & = \langle U_{\omega}(n)w(\omega), w^{*}(\theta_{n}\omega)
\rangle \\
= \langle w(\omega),
U^{*}_{\theta_{n}\omega}(n)w^{*}(\theta_{n}\omega) \rangle & =
\rho^{*}_n(\theta_{n}\omega) \langle w(\omega), w^{*}(\omega)
\rangle,
\end{aligned}
\end{equation*}
hence the exponentially decaying sequence
in~\eqref{exponential-decay} equals
\begin{equation*}
\left( \frac{\langle u,w^*(\omega) \rangle}{\langle w(\omega),
w^{*}(\omega) \rangle} - \mu(u,\omega) \right) \langle
w(\theta_{n}\omega), w^{*}(\theta_{n}\omega) \rangle.
\end{equation*}
It follows from Proposition~\ref{tempered} that
\begin{equation*}
\lim\limits_{n\to\infty} \frac{1}{n} \ln{\langle w(\theta_{n}\omega),
w^{*}(\theta_{n}\omega) \rangle} = 0,
\end{equation*}
consequently
\begin{equation*}
\frac{\langle u,w^*(\omega) \rangle}{\langle w(\omega), w^{*}(\omega)
\rangle} = \mu(u,\omega).
\end{equation*}

We have thus proved that
\begin{equation*}
\left\lVert \frac{U_{\omega}(t)u}{\rho_{t}(\omega)} - \frac{\langle
u, w^{*}(\omega) \rangle}{\langle w(\omega), w^{*}(\omega) \rangle}
w(\theta_{t}\omega)  \right\rVert \le C_3(J, \omega) \norm{u} e^{Jt}
\end{equation*}
for $\omega \in \tilde{\Omega}_2$, $t \ge 1$, $t \in \TT^{+}$, and $u
\in X^{+}$.

Next, let $u \in \mathcal{S}_1(X)$ be arbitrary.  As $(X, X^{+})$ is
a Banach lattice, there holds $\norm{u^{+}} \le \norm{u} = 1$ and
$\norm{u^{-}} \le \norm{u} = 1$.  Therefore we have
\begin{equation*}
\left\lVert \frac{U_{\omega}(t)u}{\rho_{t}(\omega)} - \mu(u,\omega)
w(\theta_{t}\omega) \right\rVert \le 2 C_3(J, \omega) e^{Jt}
\end{equation*}
for all $t \in \TT^{+}$, $t \ge 1$, and $\omega \in
\tilde{\Omega}_2$.
\end{proof}

\begin{proof} [Proof of Theorem \ref{separation-thm}]
(1)  The strong measurability follows, through the
formula~\eqref{projection-formula}, by the measurability of $w$ and
$w^{*}$.

For $\omega \in \tilde{\Omega}_1 \cap \tilde{\Omega}_1^{*}$ define
\begin{equation*}
\tilde{\tilde{P}}(\omega)u := \frac{\langle u, w^{*}(\omega)
\rangle}{\langle w(\omega), w^{*}(\omega) \rangle} w(\omega), \qquad
u \in X.
\end{equation*}
$\tilde{\tilde{P}}(\omega) = \Id_{X} - \tilde{P}(\omega)$, that is,
it equals the projection of $X$ onto $\tilde{E}_1(\omega)$ along
$\tilde{F}_1(\omega)$.  There holds
\begin{equation*}
1 \le \norm{\tilde{\tilde{P}}(\omega)} \le \frac{1}{\langle
w(\omega), w^{*}(\omega) \rangle},
\end{equation*}
consequently
\begin{equation*}
0 \le \ln{\norm{\tilde{\tilde{P}}(\omega)}} \le -\ln{\langle
w(\omega), w^{*}(\omega) \rangle}.
\end{equation*}
It follows from Proposition~\ref{tempered} that
$\ln{\norm{\tilde{\tilde{P}}(\cdot)}}$ belongs to $L_1(\OFP)$. Hence,
by Lemma~\ref{lemma-zero},
\begin{equation*}
\lim\limits_{\substack{t \to \pm\infty \\ t\in \TT}}
\frac{\ln{\norm{\tilde{\tilde{P}}(\theta_{t}\omega)}}}{t} = 0
\end{equation*}
$\PP$\nobreakdash-\hspace{0pt}a.s.\ on $\tilde{\Omega}_1 \cap
\tilde{\Omega}_1^{*}$. Therefore, for each $\epsilon > 0$ and
$\PP$\nobreakdash-\hspace{0pt}a.e.\ $\omega \in \tilde{\Omega}_1 \cap
\tilde{\Omega}_1^{*}$ there is $D(\epsilon, \omega) > 0$ such that
\begin{equation*}
\norm{\tilde{\tilde{P}}(\theta_{t}\omega)} \le D(\epsilon, \omega)
e^{{\epsilon} \abs{t}}
\end{equation*}
for all $t \in \TT$.  Since $\tilde{P}(\omega) = \Id_{X} -
\tilde{\tilde{P}}(\omega)$, we estimate
\begin{equation*}
\norm{\tilde{P}(\theta_{t}\omega)} \le 1 + D(\epsilon, \omega)
e^{{\epsilon} \abs{t}} \le (1 + D(\epsilon, \omega)) e^{{\epsilon}
\abs{t}}
\end{equation*}
for all $t \in \TT$.  Consequently, $\limsup\limits_{\substack{t \to
\pm\infty \\ t\in \TT}} (1/t) \ln{\norm{\tilde{P}(\theta_{t}\omega)}}
\le \epsilon$, but as $\epsilon > 0$ is arbitrary, we have
$\limsup\limits_{\substack{t \to \pm\infty \\ t\in \TT}} (1/t)
\ln{\norm{\tilde{P}(\theta_{t}\omega)}} \le 0$. The inequality
$\liminf\limits_{\substack{t \to \pm\infty \\ t\in \TT}} (1/t)
\ln{\norm{\tilde{P}(\theta_{t}\omega)}} \ge 0$ follows by the fact
that $\norm{\tilde{P}(\omega)} \ge 1$.

\smallskip

(2) It is a consequence of Lemma \ref{lemma-cone-in-W-plus}.

\smallskip

(3) Fix $\omega \in \tilde{\Omega}_2$.  It is clear that
$\liminf\limits_{\substack{t \to \infty \\ t \in \TT^{+}}} (1/t)
\ln{\norm{U_{\omega}(t)}} \ge \tilde{\lambda}_1$.

For a nonzero $u \in X$, put $u_1 := \tilde{P}(\omega)u$, $u_2 := u -
\tilde{P}(\omega)u$. We have $u_2 = \frac{\langle u, w^*(\omega)
\rangle}{\langle w(\omega), w^{*}(\omega) \rangle} w(\omega)$.  Take
some $J$, $\int_{\Omega} \ln{p}\, d\PP < J < 0$.  It follows from
Proposition \ref{w-w-star-prop} that
\begin{equation*}
\begin{aligned}
\norm{U_{\omega}(t)u_1} & = \Bigl\lVert U_{\omega}(t)u -
\frac{\langle u, w^*(\omega) \rangle}{\langle w(\omega),
w^{*}(\omega) \rangle}
\rho_t(\omega) w(\theta_{t}\omega) \Bigr\rVert \\
& \le C_4(J, \omega) \norm{u} \rho_t(\omega) e^{Jt},
\end{aligned}
\end{equation*}
for all $t \in \TT^{+}$, $t \ge 1$, where $C_4(J, \omega) > 0$.
Consequently,
\begin{equation*}
\norm{U_{\omega}(t)u} \le \norm{U_{\omega}(t)u_2} +
\norm{U_{\omega}(t)u_1} \le \rho_t(\omega) \norm{u} (\norm{\Id_{X} -
\tilde{P}(\omega)} + C_4(J, \omega) e^{Jt}).
\end{equation*}
This implies that $\limsup\limits_{\substack{t \to \infty \\
t \in \TT^{+}}} (1/t) \ln{\norm{U_{\omega}(t)}} \le
\tilde{\lambda}_1$. Hence $
\lim\limits_{\substack{t \to \infty \\
t \in \TT^{+}}} (1/t) \ln{\norm{U_{\omega}(t)}}= \tilde{\lambda}_1. $

Assume now $u \in X \setminus \tilde{F}_1(\omega)$.  This means that
$\langle u, w^*(\omega) \rangle \ne 0$, from which it follows that
$\norm{U_{\omega}(t)u_2} = \rho_t(\omega) \norm{u_2} > 0$ for all $t
\in \TT^{+}$.  We thus estimate
\begin{equation*}
\norm{U_{\omega}(t)u} \ge \norm{U_{\omega}(t)u_2} -
\norm{U_{\omega}(t)u_1} \ge \rho_t(\omega) (\norm{u_2} - C_4(J,
\omega) \norm{u} e^{Jt})
\end{equation*}
for all $t \in \TT^{+}$, $t \ge 1$, which gives
$\liminf\limits_{\substack{t \to \infty \\ t \in \TT^{+}}} (1/t)
\ln{\norm{U_{\omega}(t)u}} \ge \tilde{\lambda}_1$.

If $u \in X^{+} \setminus \{0\}$ we apply
Lemma~\ref{lemma-cone-in-W-plus} to conclude that $u \in X \setminus
\tilde{F}_1(\omega)$. Then by Proposition \ref{w-prop}(2),
$\lim\limits_{\substack{t \to \infty \\ t \in \TT^{+}}} (1/t)
\ln{\norm{U_{\omega}(t)u}} = \tilde{\lambda}_1$ for any $u\in
X^+\setminus\{0\}$.

\smallskip
(4)  For each $n \in \NN$ put
\begin{equation*}
f_n(\omega) :=
\ln{\frac{\norm{U_{\omega}(n)|_{\tilde{F}_1(\omega)}}}
{\norm{U_{\omega}(n) w(\omega)}}}.
\end{equation*}
The functions $f_n$ are $(\mathfrak{F},
\mathfrak{B}(\RR))$\nobreakdash-\hspace{0pt}measurable, with
$(f_1)^{+} \in L_1(\OFP)$ (by (A1)(i) and
Theorem~\ref{theorem-w}(3)). Moreover,
\begin{equation*}
f_{m+n}(\omega) \le f_{m}(\omega) + f_{n}(\theta_{m}\omega), \qquad
m, n \in \NN, \ \omega \in \tilde{\Omega}_2.
\end{equation*}

In the discrete\nobreakdash-\hspace{0pt}time case an application of
the Kingman Subadditive Ergodic Theorem (Theorem~\ref{Kingman-thm})
to $((\tilde{\Omega}_2, \mathfrak{F}, \PP), (\theta_{n})_{n \in
\ZZ})$ and $(f_n)$ gives the existence of an invariant
$\tilde{\Omega}_0 \subset \tilde{\Omega}_2$, $\PP(\tilde{\Omega}_0) =
1$, and of $\tilde{\sigma} \in (-\infty, \infty]$ with the property
that
\begin{equation*}
\lim_{n \to \infty} \frac{1}{n}
\ln{\frac{\norm{U_{\omega}(n)|_{\tilde{F}_1(\omega)}}}
{\norm{U_{\omega}(n) w(\omega)}}} = -\tilde{\sigma}
\end{equation*}
for any $\omega \in \tilde{\Omega}_0$.

In the continuous\nobreakdash-\hspace{0pt}time case an application of
the Kingman Subadditive Ergodic Theorem (Theorem~\ref{Kingman-thm})
to $((\tilde{\Omega}_2, \mathfrak{F}, \PP), (\theta_{n})_{n \in
\ZZ})$ and $(f_n)$ gives the existence of $\Omega' \subset
\tilde{\Omega}_2$, $\theta_1(\Omega') = \Omega'$, $\PP(\Omega') = 1$,
and of an $(\mathfrak{F},
\mathfrak{B}(\RR))$\nobreakdash-\hspace{0pt}measurable function $g
\colon \Omega' \to \RR$ satisfying $g^{+} \in L_1(\OFP)$ and
$\int_{\Omega} g \, d\PP = \allowbreak \lim_{n\to\infty} \allowbreak
(1/n) \int_{\Omega} f_n \, d\PP$.  It follows from (A1)(i) (see the
proof of~\cite[Lemma~3.4]{Lian-Lu}) combined with
Theorem~\ref{theorem-w}(3) that
\begin{equation*}
\lim_{t \to \infty} \frac{1}{t}
\ln{\frac{\norm{U_{\omega}(t)|_{\tilde{F}_1(\omega)}}}
{\norm{U_{\omega}(t) w(\omega)}}}
\end{equation*}
is constant for all $\omega \in \tilde{\Omega}_0 :=
\bigcup\limits_{T\in[0,1]} \theta_{T}(\Omega')$.  The set
$\tilde{\Omega}_0$ is clearly invariant.  Since $\PP$ is complete,
$\tilde{\Omega}_0 \in \mathfrak{F}$ with $\PP(\tilde{\Omega}_0) = 1$.

As a consequence of Proposition~\ref{w-w-star-prop}, $\tilde{\sigma}
\ge \tilde{\sigma}_2 > 0$.

The equality
\begin{equation*}
\lim_{\substack{t\to\infty \\ t \in \TT^{+}}} \frac{1}{t}
\ln{\norm{U_{\omega}(t)|_{\tilde{F}_1(\omega)}}} =
\tilde{\lambda}_2 = \tilde{\lambda}_1 - \tilde{\sigma}, \qquad
\omega \in \tilde{\Omega}_0,
\end{equation*}
is straightforward.

(5) First, if Theorem \ref{Oseledets-thm}(2) or (3) occurs, then by
Theorem \ref{theorem-w}(4), $\lambda_1 = \tilde{\lambda}_1 >
-\infty$.

Observe that, by Parts (3) and (4), $\tilde{F}_1(\omega) \setminus
\{0\}$ is, for each $\omega \in \tilde{\Omega}_0$, characterized as
the set of those nonzero $u \in X$ for which $\limsup_{t \to \infty,
t \in \TT^{+}}(1/t) \ln{\norm{U_{\omega}(t)u}} < \tilde{\lambda}_1$.
By Theorem~\ref{Oseledets-thm}, $\hat{F}_1(\omega) \setminus \{0\}$
is, for each $\omega \in \Omega_0$, characterized as the set of those
nonzero $u \in X$ for which $\limsup_{t \to \infty, t \in
\TT^{+}}(1/t) \ln{\norm{U_{\omega}(t)u}} < \lambda_1$.  Consequently,
$\tilde{F}_1(\omega) = \hat{F}_1(\omega)$ for all $\omega \in
\Omega_0 \cap \tilde{\Omega}_0$.  Further, from the above
characterizations it follows that $\hat{\lambda}_2 =
\tilde{\lambda}_2$.

As $\codim{\hat{F}_1(\omega)} = \codim{\tilde{F}_1(\omega)} = 1$, we
have $\dim{E_1(\omega)} = \dim{\tilde{E}_1(\omega)} = 1$, for any
$\omega \in \Omega_0 \cap \tilde{\Omega}_0$.

By Theorem~\ref{thm-largest-meets-nontrivially}(2), for
$\PP$\nobreakdash-\hspace{0pt}a.e.\ $\omega \in \Omega_0$ there
exists an entire trajectory $v_{\omega}$ of $U_{\omega}$ such that
$v_{\omega}(t) \in (E_1(\theta_{t}\omega) \cap X^{+}) \setminus
\{0\}$ for all $t \in \TT$. Therefore $v_{\omega}(t) \in
\tilde{E}_1(\theta_{t}\omega) \setminus \{0\}$ for all $t \in \TT$.
But $E_1(\theta_{t}\omega) = \spanned\{v_{\omega}(t)\}$, hence
$E_1(\theta_{t}\omega) = \tilde{E}_1(\theta_{t}\omega)$.

(6) Observe that, by Theorem~\ref{theorem-w}(4), $\tilde{\lambda}_1 =
\tilde{\lambda}_1^*$.  Without loss of generality, we assume that
(A5) holds and prove $\tilde{\lambda}_1
> -\infty$.

For each $\omega \in \Omega$ and each $n = 1,2,3, \dots$ we have
\begin{equation*}
\norm{U_{\omega}(n)\mathbf{\bar e}} \ge \nu(\omega) \dots
\nu(\theta_{n-1}\omega).
\end{equation*}
 In the
discrete\nobreakdash-\hspace{0pt}time case the Birkhoff Ergodic
Theorem (Theorem~\ref{Birkhoff-thm}(i)) applied to $-\ln{\nu}$ gives
that for $\PP$\nobreakdash-\hspace{0pt}a.e.\ $\omega \in \Omega$
there holds
\begin{equation*}
\lim\limits_{n\to\infty}\frac{1}{n} \sum\limits_{i=0}^{n-1}
\ln{\nu(\theta_{i}\omega)} = \int\limits_{\Omega} \ln{\nu}\, d\PP >
-\infty.
\end{equation*}

In the continuous\nobreakdash-\hspace{0pt}time case the Birkhoff
Ergodic Theorem (Theorem~\ref{Birkhoff-thm}(i)) applied to $(\OFP,
(\theta_{n})_{n \in \ZZ})$ and to $-\ln{\nu}$, together with (A1)(i)
(again cf.\ the proof of ~\cite[Lemma~3.4]{Lian-Lu}), give that for
$\PP$\nobreakdash-\hspace{0pt}a.e.\ $\omega \in \Omega$ one has
\begin{equation*}
\lim\limits_{t\to\infty}\frac{1}{t}
\ln{\norm{U_{\omega}(t)\mathbf{\bar e}}} \ge
\lim\limits_{n\to\infty}\frac{1}{n} \sum\limits_{i=0}^{n-1}
\ln{\nu(\theta_{i}\omega)} =: (\ln{\nu})_{\mathrm{av}}(\omega)
\end{equation*}
As the left\nobreakdash-\hspace{0pt}hand side is
$\PP$\nobreakdash-\hspace{0pt}a.e.\ constant, it must be $\ge
\int_{\Omega} (\ln{\nu})_{\mathrm{av}} \, d\PP = \int_{\Omega}
\ln{\nu} \, d\PP > -\infty$. Then by Theorem \ref{theorem-w}(4), we
must have $\tilde\lambda_1>-\infty$.
\end{proof}

\subsection{Monotonicity}

In this subsection, we prove Theorem \ref{theorem-comparison}, which
shows that the monotonicity of two measurable skew-product semiflows
at some time implies the monotonicity of the associated generalized
principal Lyapunov exponents.  We assume

\begin{proof}[Proof of Theorem \ref{theorem-comparison}]
Let $\tilde{\Omega}_0^{(i)}$, $i = 1, 2$, be a set of full measure
such that
\begin{equation*}
\lim\limits_{\substack{t \to \infty \\ t \in \TT^{+}}}
\frac{\ln{\norm{U_{\omega}^{(i)}(t)u}}}{t} =
\tilde{\lambda}_1^{(i)}
\end{equation*}
holds for any $\omega \in \tilde{\Omega}_0^{(i)}$ and any $u \in
X^{+} \setminus \{0\}$ (see Theorem~\ref{separation-thm}(3)).  Pick
$\omega \in \tilde{\Omega}_0^{(1)} \cap \tilde{\Omega}_0^{(2)}$ such
that $U_{\omega}^{(1)}(t_0)u \le U_{\omega}^{(2)}(t_0)u$ for all $u
\in X^{+}$.  We have, by the monotonicity of the norm,
\begin{equation*}
\tilde{\lambda}_1^{(1)} = \lim\limits_{n \to \infty}
\frac{\ln{\norm{U_{\omega}^{(1)}(nt_0)u}}}{n} \le \lim\limits_{n \to
\infty} \frac{\ln{\norm{U_{\omega}^{(2)}(nt_0)u}}}{n} =
\tilde{\lambda}_1^{(2)}.
\end{equation*}
\end{proof}

\section*{Acknowledgments}
The authors are grateful to the referee for helpful suggestions,
in~particular for calling their attention to the concept of
$u_0$\nobreakdash-\hspace{0pt}positivity.

\bibliographystyle{amsplain}

\end{document}